\documentclass[11pt, twoside]{article}
\usepackage{textcomp}
\usepackage[utf8]{inputenc}
\usepackage[english]{babel} 
\usepackage[T1]{fontenc} 
\usepackage[utf8]{inputenc} 
\usepackage{soul}
\usepackage[normalem]{ulem}
\usepackage{fancybox}
\usepackage{fancyvrb}
\usepackage{listings}
\usepackage{moreverb}
\usepackage{url}
\usepackage{graphicx}
\graphicspath{ {./images/} }
\usepackage{textcomp}
\usepackage{lmodern}
\usepackage{eurosym}
\usepackage[cyr]{aeguill}
\usepackage{amsmath}
\usepackage{wrapfig} 
\usepackage{makeidx} 
\usepackage{picinpar} 
\usepackage[final]{pdfpages} 
\usepackage{array,multirow,tabularx}
\usepackage{amssymb}
\usepackage[a4paper]{geometry}
\usepackage{amsthm}
\usepackage{tikz}
\usepackage{fancyhdr}
\usepackage{colortbl}
\usepackage{color}
\usepackage{setspace}
\usepackage{longtable}
\usepackage{hhline}
\usepackage{arydshln}
\usepackage{makecell}
\usepackage{mathtools}
\usepackage{tkz-tab}
\usepackage{tikz-cd}
\usepackage{multicol}
\usepackage{enumerate} 
\usepackage{hyperref}
\tikzset{node distance=2.0cm, auto}
\usepackage{quiver}

\usepackage{fancyhdr}
\renewcommand{\sectionmark}[1]{}
\renewcommand{\subsectionmark}[1]{}
\usepackage[section]{placeins}

\DeclareMathOperator{\ad}{ad}

\allowdisplaybreaks

\title{Structure and Representations of Restricted Lie color triple systems}
\author{Jon Beristain\footnote{Université de Haute-Alsace, IRIMAS UR 7499, F-68100 Mulhouse, France. \\ E-mail: \texttt{jon.beristain@uha.fr}. }  \; and  Abdenacer Makhlouf\footnote{Université de Haute-Alsace, IRIMAS UR 7499, F-68100 Mulhouse, France.\\ E-mail:  \texttt{abdenacer.makhlouf@uha.fr}.} }
\date{\today}

\begin{document}
	
	\newtheorem{thm}{Theorem}[section]
	\newtheorem{prop}[thm]{Proposition}
	\newtheorem{lem}[thm]{Lemma}
	\newtheorem{cor}[thm]{Corollary}
	\theoremstyle{definition}
	\newtheorem{defi}[thm]{Definition}
	\newtheorem*{rmq}{Remark}
	\newtheorem{ex}[thm]{Example}
	
	\maketitle
	\begin{abstract}
    The main purpose of this paper is to introduce and study restricted Lie color triple systems. We consider first restricted Lie color algebras and then provide a definition and relevant properties of restricted Lie color triple systems.  Moreover, we introduce and discuss representations of restricted Lie color algebras and restricted Lie color triple systems.
		
	\end{abstract}
	
	\noindent\textbf{Keywords:} restricted Lie color algebra, restricted Lie color triple system, representation.\\
	\noindent\textbf{MSC 2020 classification: 	} 17B75, 17A40, 	17A30,  17B10.
	\tableofcontents
     \markboth{J. Beristain and A. Makhlouf}{Restricted Lie color triple systems}
     \markright{Restricted Lie color triple systems}
	%%%%%%%%%%%%%%%%%%%%%%%%%%%%%%%%%%%%%%%%%%%%%%%%%%%%%%%%%%%%%%%%%%%%%%%%%%%%%%%%%%%%%%
	
\section{Introduction}

Lie triple systems were introduced by Jacobson in order to study representations of the Jordan algebras \cite{J51} and then considered by Yamaguti who provides the current definition \cite{Y58}. Lie triple systems appeared also in geometry, they are related to symmetric spaces in the same way that Lie algebras are related to Lie groups; they correspond to  tangent spaces of symmetric spaces. It turns out that they are also relevant in elementary particle theory and  quantum mechanics theory, in particular in meson theory.  One of the main properties, shown by Jacobson,  is that  a Lie triple system is exactly the subspace of a Lie algebra closed under the ternary operation $[[\cdot,\cdot],\cdot] $. Indeed, a subspace $S$ of a Lie algebra $L$ such that $[[x,y],z] $ lies in $S$ for all $x,y,z$ in $S$ is a Lie triple system for this operation. In \cite{J51}, it was shown  that a Lie triple system $T$ can always be embedded into a Lie algebra $\mathcal{G} $ in such a way that $T$ is a subspace of $\mathcal{G} $ closed under the triple Lie product  $[[\cdot,\cdot],\cdot] $. The structure of Lie triple systems was  studied by Lister in \cite{L51} over a field of characteristic 0. In characteristic $p>0$, Lie algebras may have an additional structure called a restricted structure, a map $(\cdot)^{[p]}: L\to L $ that satisfies some conditions. Based on the observation that the algebra of derivations of a Lie algebra over a field a characteristic $p$ is closed under the Lie bracket $[D_1,D_2]=D_1D_2 - D_2D_1 $ and  thanks to the Leibniz rule we have that  $D^p$ is again a derivation for all derivations $D$,  Jacobson formally in \cite{J41} introduced a so-called restricted structure on a Lie algebra in positive characteristic. In \cite{H01}, Hodge extended  restricted structures to Lie triple systems with the aim of preserving the relationship between Lie algebras and Lie triple systems. Lie color algebras and Lie color triple systems are generalizations of Lie algebras and Lie triple systems, including Lie superalgebras and Lie super triple systems, see \cite{BMPZ92,S79}  for the general theory of color Lie algebra  and \cite{AAM13,CZC19,KZ23,LM22} for some relevant results on Lie color triple systems in characteristic 0. 

The aim of this paper is to introduce and study  restricted Lie color triple systems,  and generalize the relevant  results in \cite{H01} to the color setting over a field of characteristic $p$. Moreover, we tackle their representation theory. We define a representation of a Lie color triple system and a restricted representation of a restricted Lie color triple system, then show that the semi-direct product is endowed with a structure of a restricted Lie color triple system. Since  definitions of a Lie color algebra and a Lie color triple system  involve a bicharacter, the PBW-Theorem does not remain true if the characteristic of the base field is equal to $2$ or $3$. Throughout this paper, in order to keep some results true, specially the Jacobson's Theorem, the characteristic of the base field is assumed to be different from two and three (see \hyperref[rmq]{Remark~\ref*{rmq}}). 

%\subsection{Organization of this paper and main results} 

The paper is organized as follows. In Section \eqref{s2}, we  review some basic definitions and properties about Lie color algebras and Lie color triple systems. In Section \eqref{s3}, we recall the definition of restricted Lie color algebras and show  Jacobson's theorem for Lie color algebras (\hyperref[thm3.5]{Theorem~\ref*{thm3.5}}). In Section \eqref{s4}, we give a definition of a restricted Lie color triple system (\hyperref[d4.1]{Definition~\ref*{d4.1}}). We generalize some results in \cite{H01}, specially the fact that when the standard embedding of a Lie color triple system is restricted, then the Lie color triple system is restricted with the same $p$-map (\hyperref[c4.11]{Corollary~\ref*{c4.11}}). Conversely, we show that we can, under some assumptions, extend the $p$-map of a restricted Lie color triple system $T$ into an embedding of $T$: $L_D(T)$ (\hyperref[thm4.15]{Theorem~\ref*{thm4.15}}). Finally, we show Jacobson's Theorem for Lie color triple systems (\hyperref[thm4.16]{Theorem~\ref*{thm4.16}}). In Section \eqref{s5}, we define the concept pf restricted representation of a restricted Lie color triple system (\hyperref[d5.11]{Definition~\ref*{d5.11}}) and show that we can endow the semi-direct product $T\oplus V $ of a restricted Lie color triple system $T$ and its representation $(V,\theta) $ with a structure of restricted Lie color triple system (\hyperref[thm5.18]{Theorem~\ref*{thm5.18}}).

\section{Lie color algebras and Lie color triple systems}\label{s2}

In this section, we review some basic  definitions about Lie color algebras and Lie color triple systems over a field $\mathbf{K} $. Moreover, we provide some examples and relevant results and constructions.

\subsection{Lie color algebras}

\begin{defi}
    Let $(G,+)$ be an abelian group. A symmetric bicharacter $\varepsilon$ of $G$ is a map $\varepsilon:G\times G\to \mathbf{K}^* $ satisfying \begin{enumerate}
        \item $\varepsilon(\alpha+\beta,\gamma)= \varepsilon(\alpha,\gamma)\varepsilon(\beta,\gamma) $,
        \item $\varepsilon(\alpha,\beta+\gamma)= \varepsilon(\alpha,\beta)\varepsilon(\alpha,\gamma) $,
        \item $ \varepsilon(\alpha,\beta)\varepsilon(\beta,\alpha) =1$.
    \end{enumerate} We denote by $\mathcal{B}(G)$ the set of all bicharacters of $G$.
\end{defi}
It follows from the definition that $\varepsilon(0,\alpha)=\varepsilon(\alpha,0)=1 $ and $\varepsilon(\alpha,\alpha)= \pm 1 $.

\begin{ex}
    
    \begin{enumerate}
        \item The set $\mathcal{B}(\mathbb{Z}_2) $ has two elements $\varepsilon^+$ and $\varepsilon^-$ given by $\varepsilon^+(\bar{1},\bar{1})=1$, $\varepsilon^-(\bar{1},\bar{1})= -1 $.
        \item Let $\phi:G\to G'$ be a homomorphism of abelian groups. Let $\varepsilon \in \mathcal{B}(G')$. Define $\varepsilon^{\phi}: G\times G\to \mathbf{K}^* $ by $\varepsilon^{\phi}(\alpha,\beta)= \varepsilon(\phi(\alpha),\phi(\beta)) $. Then $\varepsilon^{\phi}\in \mathcal{B}(G) $.
    \end{enumerate}
\end{ex}
For further use, we denote by $G=G_+\sqcup G_- $, where $G_+= \left\{\gamma\in G, \varepsilon(\gamma,\gamma)=1 \right\}  $ and $G_-= \left\{\gamma\in G, \varepsilon(\gamma,\gamma)=-1 \right\} $. We note that $G_+ $ is a subgroup of $G$.

\begin{defi}
    Let $G$ be an abelian group. A   vector space $V$ is said to be a $G$-graded vector space if $V$ is  a direct sum of vector subspaces indexed by $G$, $V=\bigoplus\limits_{i\in G}V_i $. An element $x$ of $V$ is said homogeneous if there exists $i$ in $G$ such that $x$ lies in $V_i$. We denote by $\mathcal{H}(V)$ the set of homogeneous elements of $V$. For a homogeneous element $x\in V$, we denote by $\bar{x}\in G $ the degree of $x$.
\end{defi}

\begin{defi}[\cite{BMPZ92}]
    Let $G$ be an abelian group. A Lie color algebra is a triple $(L,[\cdot,\cdot],\varepsilon) $ where $L= \bigoplus\limits_{i\in G} L_i $ is a $G$-graded vector space, $[\cdot,\cdot]:L\times L\to L $ is a bilinear map and $\varepsilon $ is a bicharacter on $G$ such that for all homogeneous elements $u,v,x,y$ and $z$ in $\mathcal{H}(L)$, the following conditions are satisfied: \begin{enumerate}
        \item $[L_i,L_j]\subset L_{i+j} $, for all $i,j \in G$,
        \item  $[x,y]= -\varepsilon(\bar{x},\bar{y})[y,x] $,
        \item  $\varepsilon(\bar{z},\bar{x})[x,[y,z]]+ \varepsilon(\bar{x},\bar{y})[y,[z,x]]+ \varepsilon(\bar{y},\bar{z})[z,[x,y]]=0 $,
        
    \end{enumerate} where $\bar{x}\in G $ is the degree of $x$. 
    \end{defi}
    In the sequel of this section, $G$ is an abelian group, $\varepsilon$ is a fixed bicharacter and we will simply denote by $L$ the triple $(L,[\cdot,\cdot], \varepsilon) $.
For a Lie color algebra $L$, we denote by $L_+=\bigoplus\limits_{\gamma\in G_+}L_{\gamma} $ and $L_-=\bigoplus\limits_{\gamma\in G_-}L_{\gamma} $.

\begin{ex}[\cite{AAM13}] \label{Example1}
We consider 
%\begin{center}
    $\Gamma= \left\{(0,0,0),(1,1,0), (1,0,1), (0,1,1) \right\} \subset\mathbb{Z}_2^3 $ 
    %\end{center}
   and a  bicharacter $\varepsilon:\Gamma\times \Gamma\to \mathbf{K}^* $ defined by the matrix \begin{center}
       $(\varepsilon(i,j))_{i,j} =$ $ \begin{pmatrix}
1 & 1 & 1 & 1 \\       
1 & 1 & -1 &  -1 \\
1 & -1 & 1 & -1 \\
1 & -1 &  -1& 1  \\
\end{pmatrix}$,
    \end{center} where the elements $(0,0,0), (1,1,0), (1,0,1), (0,1,1) $ are ordered and numbered by the numbers $ 0,1,2,3$ respectively.

    We define a complex algebra $L$ with three generators $e_1,e_2$ and $e_3$ that satisfy the commutation relations $e_1e_2+e_2e_1= e_3 $, $e_1e_3+e_3e_1= e_2 $, $e_2e_3+e_3e_2=e_1 $. Let $L$ be a $\Gamma $-graded linear space $L= L_{(1,1,0)}\oplus L_{(1,0,1)} \oplus L_{(0,1,1)} $ with  $ L_{(1,1,0)}=\mathrm{span}\{e_1\} $, $L_{(1,0,1)}=\mathrm{span}\{e_2\} $, $L_{(0,1,1)}=\mathrm{span}\{e_3\} $. The homogeneous subspace of $L$ graded by $(0,0,0) $ is zero. We define on $L$ the bracket given, with respect to the basis $(e_1,e_2,e_3)$, by   \begin{center}
        $[e_1,e_1]=0 $, $[e_1,e_2]=e_3 $,

        $[e_2,e_2]=0 $, $[e_1,e_3]=e_2 $,

        $[e_3,e_3]=0 $, $[e_2,e_3]= e_1 $,
    \end{center} which makes $L$ into a three-dimensional Lie color algebra.
\end{ex}

\begin{defi}
    Let $L= \bigoplus\limits_{i\in G}L_i $ be a Lie color algebra. A linear map $D:L\to L$ is called a color derivation of degree $\alpha$ if for $x \in \mathcal{H}(L)$, $y\in L $, we have \begin{center}
        $D([x,y])= [D(x),y]+ \varepsilon(\alpha,\bar{x})[x,D(y)] $.
    \end{center} We denote by $\mathrm{Der}(L)_{\alpha} $ the vector space spanned by all color derivations of degree $\alpha $ and $\mathrm{Der}(L)= \bigoplus\limits_{\alpha\in G}\mathrm{Der}(L)_\alpha $.
\end{defi}
The Leibniz rule in the color case is  given by the following proposition:
\begin{prop}
    Let $L$ be a Lie color algebra and $D$ be a color derivation of degree $\alpha\in G_+$. Then, for all integers $n$ and  for all $x\in \mathcal{H}(L), y\in L ,$ we have \begin{center}
        $D^n([x,y])=\sum\limits_{k=0}^n \binom{n}{k}\varepsilon(\alpha,\bar{x})^k [D^{n-k}(x),D^k(y)] $.
    \end{center}
\end{prop}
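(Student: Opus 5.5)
We argue by induction on $n\geq 0$, exactly as for the classical Leibniz formula. The only new ingredient is tracking the bicharacter factors, and the hypothesis $\alpha\in G_+$ is what keeps them under control. For $n=0$ both sides equal $[x,y]$. For $n=1$ the formula is the definition of a color derivation of degree $\alpha$: $D([x,y])=[D(x),y]+\varepsilon(\alpha,\bar{x})[x,D(y)]$.

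The key preliminary observation is how the factor behaves under repeated application of $D$. If $x$ is homogeneous of degree $\bar{x}$, then $D^{j}(x)$ is homogeneous of degree $\bar{x}+j\alpha$, since $D$ has degree $\alpha$. By bimultiplicativity,
\[
\varepsilon(\alpha,\bar{x}+j\alpha)=\varepsilon(\alpha,\bar{x})\,\varepsilon(\alpha,\alpha)^{j}=\varepsilon(\alpha,\bar{x}),
\]
because $\varepsilon(\alpha,\alpha)=1$ when $\alpha\in G_+$. So applying $D$ to a bracket $[D^{n-k}(x),D^k(y)]$ gives
\[
D\bigl([D^{n-k}(x),D^k(y)]\bigr)=[D^{n-k+1}(x),D^k(y)]+\varepsilon(\alpha,\bar{x})[D^{n-k}(x),D^{k+1}(y)],
\]
with the same scalar $\varepsilon(\alpha,\bar{x})$ regardless of $k$. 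We use the derivation rule with the homogeneous element $D^{n-k}(x)$ in the first slot. The second slot need not be homogeneous, and the definition allows $y\in L$ arbitrary, so no decomposition of $y$ is needed.

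For the inductive step, assume the formula holds for $n$ and apply $D$ to both sides, using linearity and the display above. This gives
\[
\sum_{k=0}^{n}\binom{n}{k}\varepsilon(\alpha,\bar{x})^{k}[D^{n+1-k}(x),D^k(y)]+\sum_{k=0}^{n}\binom{n}{k}\varepsilon(\alpha,\bar{x})^{k+1}[D^{n-k}(x),D^{k+1}(y)].
\]
Reindex the second sum by $k\mapsto k-1$. Each term $[D^{n+1-k}(x),D^k(y)]$ then carries the coefficient $\bigl(\binom{n}{k}+\binom{n}{k-1}\bigr)\varepsilon(\alpha,\bar{x})^{k}$, and Pascal's rule turns this into $\binom{n+1}{k}\varepsilon(\alpha,\bar{x})^k$. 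This proves the formula for $n+1$.

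The only point needing care is the constancy of the factor $\varepsilon(\alpha,\overline{D^{j}(x)})$. This is where $\alpha\in G_+$ is essential: for $\alpha\in G_-$ the factor would alternate in sign with $j$, and the formula would change.
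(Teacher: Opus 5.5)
Your proof is correct and follows the same route as the paper: induction on $n$, applying $D$ to each bracket via the derivation rule, reindexing the second sum and using Pascal's rule. You also make explicit the step the paper leaves implicit, namely that $\varepsilon(\alpha,\overline{D^{j}(x)})=\varepsilon(\alpha,\bar{x})\varepsilon(\alpha,\alpha)^{j}=\varepsilon(\alpha,\bar{x})$ because $\alpha\in G_+$.
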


\begin{proof}
    The case $n=0 $ is immediate. 
    Assume that the formula is true for an integer $n\geq 0$. Let $x\in \mathcal{H}(L), y\in L$. We have \begin{align*}
        D^{n+1}([x,y]) = &D( \sum\limits_{k=0}^n \tbinom{n}{k}\varepsilon(\alpha,\bar{x})^k [D^{n-k}(x),D^k(y)] ) \\  =& \sum\limits_{k=0}^n \tbinom{n}{k}\varepsilon(\alpha,\bar{x})^k ([D^{n-k+1}(x),D^k(y)] + \varepsilon(\alpha,\bar{x})[D^{n-k}(x),D^{k+1}(y)]) \\  = &[D^{n+1}(x),y]+ \sum\limits_{k=1}^n \tbinom{n}{k}\varepsilon(\alpha,\bar{x})^k [D^{n-k+1}(x),D^k(y)] \\ & + \sum\limits_{k=0}^{n-1} \tbinom{n}{k}\varepsilon(\alpha,\bar{x})^{k+1} [D^{n-k}(x),D^{k+1}(y)] + \varepsilon(\alpha,\bar{x})^{n+1} [x,D^{n+1}(y)] \\  =&  \sum\limits_{k=1}^n (\tbinom{n}{k}+\tbinom{n}{k-1}) \varepsilon(\alpha,\bar{x})^k [D^{n+1-k}(x),D^k(y)] +[D^{n+1}(x),y]\\ &+ \varepsilon(\alpha,\bar{x})^{n+1} [x,D^{n+1}(y)] \\ = & \sum\limits_{k=0}^{n+1} \tbinom{n+1}{k}\varepsilon(\alpha,\bar{x})^k [D^{n+1-k}(x),D^k(y)] ).
    \end{align*} 
    
\end{proof}

The universal enveloping algebra of a Lie color algebra $L= \bigoplus\limits_{\gamma\in G}L_\gamma $ is defined as in \cite{BMPZ92}. Let $T=T(L)$ be the tensor algebra of $L$. Then $T$ inherits a $G $-gradation given by the isomorphism \begin{center}
    $T\cong \bigoplus\limits_{\gamma\in G} (\bigoplus\limits_{\left\{\gamma_1,\cdots,\gamma_n, n\in \mathbf{N}\mid \gamma_1+\cdots+\gamma_n= \gamma   \right\} }L_{\gamma_1}\otimes\cdots\otimes L_{\gamma_n}).  $ \end{center} 
    We consider the ideal $I$ (of the associative algebra) of $T$ generated by all tensors of the form $u\otimes v - \varepsilon(\bar{u},\bar{v})v\otimes u -[u,v]$ for homogeneous elements $u,v$ and set $U(L)= T(L)/I $. We have a homomorphism of Lie color algebra $\iota : L\to U(L) $. Thanks to  the universal property of the tensor algebra, we have that for all color associative algebra $A$ with an embedding $\varphi:L\to A_{L} $ there exists a unique graded homomorphism of associative algebra $f:U(L)\to A $ such that $f\circ \iota= \varphi $. 
    Hence $U(L)$ is called the universal enveloping algebra of $L$. We have a colored version of the PBW-Theorem in characteristic different from $2$ and $3$, and so in this case we have  $\iota:L\hookrightarrow U(L) $.

\begin{rmq}[\cite{F01}]\label{rmq}
In characteristic $2$ and $3$, $U(L)$ is still the universal enveloping algebra of a Lie color algebra $L$ but the embedding $\iota:L\to U(L) $ is no longer injective in general. Indeed, if the characteristic is $2$, given an associative color algebra $A$, we have for the associated Lie color algebra that \begin{center}
    $[a,a]=0 $ for all $a\in A$.
\end{center} However, we do not necessary have $[x,x]=0$ for all $x$ in a Lie color algebra if the characteristic is $2$. Similarly, if the characteristic is $3$, we have that \begin{center}
    $[[a,a],a]=0 $ for all $a\in A$.
\end{center} However, we do not necessary have $[[x,x],x] $ for all $x$ in a Lie color algebra if the characteristic is $3$. In these cases, there are no possible injective embeddings.
\end{rmq}

\begin{rmq}[\cite{BMPZ92}]
    In characteristic 3, if in the definition of Lie color algebra we assume in addition that $[y,[y,y]]=0 $ for all homogeneous elements $y\in L_-$, then the PBW-Theorem remains true.
\end{rmq}

\subsection{Lie color triple systems}

A definition of Lie color triple system can be found in \cite{CZC19}. According to the authors, it was originally given in  Zeng's thesis, which is  unfortunately not available.

\begin{defi}\label{d2.8}
    A Lie color triple system is a triple $(T,[\cdot,\cdot,\cdot],\varepsilon) $ consisting of a $G$-graded vector space $T=\bigoplus\limits_{i\in G}T_i$, a trilinear map $[\cdot,\cdot,\cdot]:T\times T\times T\to T $ and a bicharacter $\varepsilon $ on $G$ such that for all $u,v,x,y$ and $z$ in $\mathcal{H}(T)$, the following conditions are satisfied: \begin{align}
       & [T_i,T_j,T_k]\subset T_{i+j+k} , \text{ for all } i,j,k \in G,\label{lts1}\\
        & [x,y,z]= -\varepsilon(\bar{x},\bar{y})[y,x,z] ,\label{lts2}\\
       & \varepsilon(\bar{z},\bar{x})[x,y,z]+ \varepsilon(\bar{x},\bar{y})[y,z,x]+ \varepsilon(\bar{y},\bar{z})[z,x,y]=0 ,\label{lts3}\\
       & [u,v,[x,y,z]]=[[u,v,x],y,z]+ \varepsilon(\bar{u}+\bar{v},\bar{x})[x,[u,v,y],z] + \varepsilon(\bar{u}+\bar{v},\bar{x}+\bar{y})[x,y,[u,v,z]] \label{lts4}. 
    \end{align} 
    Morphisms of Lie color triple systems are defined in a very natural way.  In the sequel, we will denote by $T $  a Lie color triple system instead of the triple, when there is no ambiguity.
\end{defi}

\begin{ex}
\begin{enumerate}
    \item Let $L$ be a Lie color algebra. Then any subspace $T$ of $L$ closed under the operation $(x,y,z) \mapsto[[x,y],z] $ for all $x,y,z \in T $ is a Lie color triple system denoted by $L_{\mathrm{trip}} $.
    \item Consider Example \ref{Example1}, $L=L_{(1,1,0)}\oplus L_{(1,0,1)} \oplus L_{(0,1,1)} $, then $L $ with the bracket $[x,y,z]=[[x,y],z] $ is a Lie color triple system. It is defined by the following ternary brackets:\begin{center}
        $[e_i,e_i,e_j]=0 $ for all $i,j\in \left\{1,2,3 \right\}$, $[e_1,e_2,e_1]=e_2 $, $[e_1,e_2,e_2]=e_1 $,

        $[e_1,e_2,e_3]=0 $, $[e_1,e_3,e_1]=e_3 $, $[e_1,e_3,e_2]=0 $, $[e_1,e_3,e_3]= e_1 $,

        $[e_2,e_3,e_1]=0 $, $[e_2,e_3,e_2]=e_3 $, $[e_2,e_3,e_3]=e_2 $.
    \end{center}
    \end{enumerate}
\end{ex}

\begin{defi}[\cite{KZ23}]
    Let $T= \bigoplus\limits_{i\in G}T_i $ be a Lie color triple system. Then a linear map $D:T\to T$ is a color derivation of degree $\alpha$ if for all $x,y \in \mathcal{H}(T), z\in T$, we have \begin{center}
        $D([x,y,z])= [D(x),y,z]+ \varepsilon(\alpha,\bar{x})[x,D(y),z]+ \varepsilon(\alpha,\bar{x}+\bar{y})[x,y,D(z)] $,
    \end{center} and if $D(T_\gamma)\subset T_{\alpha+\gamma}$. We denote by $\mathrm{Der}(T)_{\alpha} $ the vector space spanned by all the color derivations of degree $\alpha $ and $\mathrm{Der}(T)= \bigoplus\limits_{\alpha\in G}D_\alpha $.
\end{defi}

\begin{thm}[\cite{KZ23}]
    Let $T$ be a Lie color triple system. Then $\mathrm{Der}(T)$ equipped with the bracket $[D_i,D_j]= D_iD_j - \varepsilon(i,j)D_jD_i $ for $D_i\in \mathrm{Der}(T)_i,D_j\in \mathrm{Der}(T)_j $, is a Lie color algebra.
\end{thm}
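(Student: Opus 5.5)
We argue in two stages. First, the color commutator $[D_i,D_j]=D_iD_j-\varepsilon(i,j)D_jD_i$ makes the graded space $\mathrm{End}(T)=\bigoplus_{\alpha\in G}\mathrm{End}(T)_\alpha$ a Lie color algebra. Second, $\mathrm{Der}(T)$ is a graded subspace of $\mathrm{End}(T)$ closed under this bracket, hence a color subalgebra. Note that $\mathrm{Der}(T)$ is graded by construction, since each $D\in\mathrm{Der}(T)_\alpha$ satisfies $D(T_\gamma)\subset T_{\alpha+\gamma}$.

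For the first stage, take homogeneous endomorphisms $A,B,C$ of degrees $a,b,c$. The bracket has degree $a+b$, so it is graded. Color skew-symmetry is immediate from $\varepsilon(a,b)\varepsilon(b,a)=1$:
\begin{center}
$-\varepsilon(a,b)(BA-\varepsilon(b,a)AB)=AB-\varepsilon(a,b)BA$.
\end{center}
For the color Jacobi identity $\varepsilon(c,a)[A,[B,C]]+\varepsilon(a,b)[B,[C,A]]+\varepsilon(b,c)[C,[A,B]]=0$, we expand each double bracket into four monomials $ABC,ACB,\dots$. Each of the six orderings then appears exactly twice, and we check that the two coefficients cancel using bimultiplicativity and $\varepsilon(\alpha,\beta)\varepsilon(\beta,\alpha)=1$. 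For example, $ABC$ arises with coefficient $\varepsilon(c,a)$ from the first term and with coefficient $-\varepsilon(b,c)\varepsilon(c,a+b)=-\varepsilon(c,a)$ from the third. This is the standard computation for associative color algebras.

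For the second stage, which is the main step, let $D_i\in\mathrm{Der}(T)_i$ and $D_j\in\mathrm{Der}(T)_j$. Take homogeneous $x,y$ and any $z$. We compute $D_iD_j([x,y,z])$ by applying the derivation rule twice. This gives nine terms:
\begin{itemize}
\item three "diagonal" terms in which both maps hit the same slot, with coefficients $1$, $\varepsilon(i+j,\bar x)$ and $\varepsilon(i+j,\bar x+\bar y)$;
\item six "mixed" terms in which $D_j$ and $D_i$ hit different slots.
\end{itemize}
The key bookkeeping point is that once $D_j$ has been applied to $x$, the degree of $D_j(x)$ is $j+\bar x$. So when $D_i$ then passes this slot, it picks up $\varepsilon(i,j+\bar x)=\varepsilon(i,j)\varepsilon(i,\bar x)$.

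We then do the same for $\varepsilon(i,j)D_jD_i([x,y,z])$ and subtract. The diagonal terms combine to give
\begin{center}
$[[D_i,D_j]x,y,z]+\varepsilon(i+j,\bar x)[x,[D_i,D_j]y,z]+\varepsilon(i+j,\bar x+\bar y)[x,y,[D_i,D_j]z]$,
\end{center}
which is exactly the derivation rule for $[D_i,D_j]$ in degree $i+j$. The six mixed terms should cancel pairwise. For instance, $[D_i x,D_j y,z]$ appears in $D_iD_j$ with coefficient $\varepsilon(j,\bar x+i)$. In $\varepsilon(i,j)D_jD_i$ it appears with coefficient $\varepsilon(i,j)\varepsilon(j,i+\bar x)=\varepsilon(j,\bar x)$, using $\varepsilon(i,j)\varepsilon(j,i)=1$. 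These agree, so the term cancels. The remaining mixed pairs cancel by the same computation with the roles of the slots changed.

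The main obstacle is keeping these $\varepsilon$-factors straight, and the degree shifts in particular. To do this, we would fix the convention that $D_j(x)$ has degree $j+\bar x$ and carry it consistently through all nine terms. Once closure under the bracket is established, skew-symmetry and the Jacobi identity are inherited from $\mathrm{End}(T)$, which completes the proof.
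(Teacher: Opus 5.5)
The paper gives no proof of this statement; it is quoted from \cite{KZ23}, so there is no argument in the text to compare with. Your two-stage route is the standard one, and it works. First, the color commutator on homogeneous endomorphisms gives a Lie color algebra. Second, $\mathrm{Der}(T)$ is closed under it. I checked that the six mixed terms cancel in pairs, and that the diagonal terms assemble into the derivation rule for $[D_i,D_j]$ in degree $i+j$.

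There is, however, a slip in the one coefficient you wrote out, exactly at the point you flagged as delicate. In $D_iD_j([x,y,z])$ the map $D_j$ acts first. The term $[D_ix,D_jy,z]$ therefore arises when $D_j$ lands on $y$, giving the factor $\varepsilon(j,\bar x)$. Then $D_i$ lands on $x$, giving the factor $1$, since it passes nothing. So its coefficient is $\varepsilon(j,\bar x)$, not $\varepsilon(j,\bar x+i)$. The factor $\varepsilon(j,\bar x+i)$ is its coefficient in $D_jD_i([x,y,z])$, where $D_j$ passes $D_ix$ of degree $i+\bar x$. As written, your two coefficients are $\varepsilon(j,\bar x+i)=\varepsilon(j,i)\varepsilon(j,\bar x)$ and $\varepsilon(j,\bar x)$. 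These differ whenever $\varepsilon(i,j)\neq 1$, so the sentence ``these agree'' is false as stated. With the corrected coefficient the cancellation does hold. The same care handles the other five pairs. For example, $[D_jx,D_iy,z]$ has coefficient $\varepsilon(i,j+\bar x)$ in $D_iD_j$, and $\varepsilon(i,j)\varepsilon(i,\bar x)$ in $\varepsilon(i,j)D_jD_i$; these are equal.

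A minor point: for infinite-dimensional $T$, the identity $\mathrm{End}(T)=\bigoplus_{\alpha}\mathrm{End}(T)_\alpha$ fails in general. Work instead in the subspace $\bigoplus_\alpha \mathrm{End}(T)_\alpha$ of finite sums of homogeneous maps. It is closed under composition, and it is all your argument needs.
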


For $x,y$ in $\mathcal{H}(T) $, we set $D_{x,y} $ as the endomorphism of $T $ defined by %\begin{center}
    $D_{x,y}(z)=[x,y,z] $.
%\end{center} 
Then $D_{x,y} $ is a color derivation of degree $\bar{x}+\bar{y} $.  The subspace in $\mathrm{Der}(T)$ spanned by all the $D_{x,y} $ form the inner derivations and is denoted by $\mathrm{InnDer}(T)$.

\begin{defi}[\cite{KZ23}]
    Let $T$ be a Lie color triple system and $H $ be a $G $-graded subspace of $T$. Then $H$ is called a color ideal of $T$ if $[H,T,T]\subset H $.
\end{defi}

\begin{defi}
    Let $T$ be a Lie color triple system. The center of $T$ is defined by \begin{center}
        $Z(T)=\left\{x\in T \mid [x,y,z]=0, \forall y,z\in T \right\}  $.
    \end{center}
\end{defi}

\begin{prop}
    Let $T$ be a Lie color triple system, then the center of $T$ is a color ideal of $T$.
\end{prop}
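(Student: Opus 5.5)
To prove that $Z(T)$ is a color ideal, we must check two things: that $Z(T)$ is a $G$-graded subspace of $T$, and that $[Z(T),T,T]\subset Z(T)$. The second condition turns out to be nearly trivial. If $x\in Z(T)$, then $[x,y,z]=0$ for all $y,z\in T$, so $[Z(T),T,T]=\{0\}$, and $0$ lies in the subspace $Z(T)$. Linearity of $Z(T)$ is immediate from the trilinearity of the bracket. So the real content is the gradedness.

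For the grading, take an arbitrary $x\in Z(T)$ and decompose it as $x=\sum_{i\in G}x_i$ with $x_i\in T_i$. We want each $x_i$ to lie in $Z(T)$. Since the bracket is trilinear, it suffices to test against homogeneous $y\in T_j$ and $z\in T_k$. By \eqref{lts1}, $[x_i,y,z]\in T_{i+j+k}$, and for fixed $j,k$ the map $i\mapsto i+j+k$ is injective on $G$. The identity
\begin{center}
$0=[x,y,z]=\sum_{i}[x_i,y,z]$
\end{center}
is therefore a sum of components lying in pairwise distinct homogeneous subspaces $T_{i+j+k}$. Because the sum $T=\bigoplus T_\gamma$ is direct, each term must vanish, so $[x_i,y,z]=0$. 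Extending bilinearly to all $y,z\in T$ gives $x_i\in Z(T)$. Hence $Z(T)=\bigoplus_{i}(Z(T)\cap T_i)$, which is exactly the statement that $Z(T)$ is graded.

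The only step that needs any care is this gradedness argument, specifically the use of the direct-sum decomposition together with the injectivity of $i\mapsto i+j+k$. Note that this argument needs only the definition of $Z(T)$ by vanishing of the first slot; it does not call on the axioms \eqref{lts2}--\eqref{lts4}.
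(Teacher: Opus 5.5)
Your proof is correct and complete. The paper gives no proof of this proposition at all; it states it as evident, presumably on the strength of $[Z(T),T,T]=0\subset Z(T)$. Your write-up contains that observation and also checks the one requirement of the paper's definition of color ideal that goes beyond it, namely that $Z(T)$ is a $G$-graded subspace. You establish this correctly using \eqref{lts1}, the injectivity of $i\mapsto i+j+k$, and the directness of $T=\bigoplus_{\gamma}T_\gamma$.

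The paper's notion of color ideal is only the one-sided condition $[H,T,T]\subset H$, so nothing more is needed. If you wanted the stronger fact that central elements also annihilate the other two slots, your gradedness step reduces it to homogeneous $x\in Z(T)$. For such $x$, \eqref{lts2} gives $[y,x,z]=0$, and then \eqref{lts3} gives $[y,z,x]=0$.
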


\begin{thm}[\cite{KZ23}]
  The set   $\mathrm{InnDer}(T)$ is a color ideal of $\mathrm{Der}(T)$.
\end{thm}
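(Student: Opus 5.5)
To show that $\mathrm{InnDer}(T)$ is a color ideal of $\mathrm{Der}(T)$, it suffices to prove that for a homogeneous color derivation $D\in\mathrm{Der}(T)_\alpha$ and homogeneous $x,y\in\mathcal{H}(T)$, the color commutator $[D,D_{x,y}]=D\circ D_{x,y}-\varepsilon(\alpha,\bar{x}+\bar{y})D_{x,y}\circ D$ is again an inner derivation. Since $\mathrm{InnDer}(T)$ is spanned by the $D_{x,y}$, bilinearity of the bracket then extends this to all of $\mathrm{InnDer}(T)$ and $\mathrm{Der}(T)$. Antisymmetry $[D_{x,y},D]=-\varepsilon(\bar{x}+\bar{y},\alpha)[D,D_{x,y}]$ handles the other side. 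Note also that $\mathrm{InnDer}(T)$ is $G$-graded, since $D_{x,y}$ has degree $\bar{x}+\bar{y}$, and that it lies in $\mathrm{Der}(T)$ by the remark preceding the statement.

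The key step is a direct computation on an arbitrary $z\in T$. Apply the derivation rule for $D$ to $D([x,y,z])$:
\begin{align*}
D([x,y,z]) = [D(x),y,z]+\varepsilon(\alpha,\bar{x})[x,D(y),z]+\varepsilon(\alpha,\bar{x}+\bar{y})[x,y,D(z)].
\end{align*}
Subtracting $\varepsilon(\alpha,\bar{x}+\bar{y})D_{x,y}(D(z))=\varepsilon(\alpha,\bar{x}+\bar{y})[x,y,D(z)]$ cancels the last term. This gives the identity
\begin{align*}
[D,D_{x,y}] = D_{D(x),y}+\varepsilon(\alpha,\bar{x})D_{x,D(y)}.
\end{align*}
Here $D(x)$ and $D(y)$ are homogeneous of degrees $\alpha+\bar{x}$ and $\alpha+\bar{y}$, so the right-hand side is a sum of inner derivations, and it is homogeneous of degree $\alpha+\bar{x}+\bar{y}$, as required.

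The only point needing care is bookkeeping. We must check that the bracket on $\mathrm{Der}(T)$ from the theorem of \cite{KZ23} uses exactly the sign $\varepsilon(\alpha,\bar{x}+\bar{y})$, so that the cancellation of the third term is exact. We must also note that $D_{u,v}$ for non-homogeneous $u,v$ is defined by bilinear extension, so that $D_{D(x),y}$ makes sense even when $D$ is a sum of homogeneous components; it is simplest to reduce to homogeneous $D$ first. No use of the identities \eqref{lts3} or \eqref{lts4} is needed beyond the fact, already recorded, that each $D_{x,y}$ is a derivation.
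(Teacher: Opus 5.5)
Your proposal is correct. Note that the paper itself gives no proof of this statement; it only quotes it from \cite{KZ23}, so there is no in-paper argument to compare against.

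Your computation
\[
[D,D_{x,y}] = D_{D(x),y}+\varepsilon(\alpha,\bar{x})D_{x,D(y)}
\]
is the standard argument for this fact, and the sign bookkeeping checks out. The bracket on $\mathrm{Der}(T)$ is $D_iD_j-\varepsilon(i,j)D_jD_i$, so the term $\varepsilon(\alpha,\bar{x}+\bar{y})[x,y,D(z)]$ cancels exactly.

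Beyond the derivation rule, the only structural input is that each $D_{x,y}$ is itself a derivation, so that $\mathrm{InnDer}(T)\subset\mathrm{Der}(T)$. You correctly point out this dependence, and that fact is where \eqref{lts4} enters.

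Reducing to homogeneous $D$ and homogeneous $x,y$ is legitimate. $\mathrm{Der}(T)$ is defined as $\bigoplus_\alpha \mathrm{Der}(T)_\alpha$, and $\mathrm{InnDer}(T)$ is spanned by the homogeneous $D_{x,y}$. Your antisymmetry remark makes the one-sided check sufficient.
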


\begin{defi}
    Let $T$ be a Lie color triple system. An embedding of $T$ is a Lie color algebra $L$  together with a linear map $\varphi: T\to L $ such that for all $x,y,z\in T $, $$\varphi([x,y,z])= [[\varphi(x),\varphi(y)],\varphi(z)]. $$
\end{defi}

Let $T= \bigoplus\limits_{i\in G}T_i $ be a Lie color triple system and consider the vector spaces \begin{eqnarray*}
  && L_D(T)= \bigoplus\limits_{i\in G}L_D(T)_i,  \text{ where } L_D(T)_i= T_i \oplus \mathrm{Der}(T)_i \\ && \text{and }L_S(T)=\bigoplus\limits_{i\in G} L_S(T)_i, \text{  where }  L_S(T)_i= T_i\oplus \mathrm{InnDer}(T)_i .  
\end{eqnarray*}
Define a bilinear product on $L_D(T)$ as follows, for $x,y$ in $\mathcal{H}(T)$, $D,D'\in \mathcal{H}(\mathrm{Der}(T))$ \begin{align*}
[x,y]= & D_{x,y} \\
[D,x]=& D(x) \\
[x,D]=&-\varepsilon(\bar{x},\bar{D})D(x) \\
[D,D']= &DD'- \varepsilon(\bar{D},\bar{D'})DD'.
\end{align*} With this bracket, $L_D(T) $ is a Lie color algebra.

\begin{prop}
    $L_S(T) $ is a color ideal of $L_D(T) $.
\end{prop}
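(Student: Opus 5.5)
The plan is to check directly that $L_S(T)$ is a $G$-graded subspace of $L_D(T)$ which is stable under bracketing with any element of $L_D(T)$. By bilinearity, it suffices to test homogeneous generators.

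\emph{Gradedness.} For each $i\in G$ we have $L_S(T)_i=T_i\oplus \mathrm{InnDer}(T)_i\subset L_D(T)_i$, so $L_S(T)$ is a graded subspace. Here $\mathrm{InnDer}(T)$ is graded because each $D_{x,y}$ with $x,y$ homogeneous has degree $\bar{x}+\bar{y}$.

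\emph{Stability.} It remains to show $[L_D(T),L_S(T)]\subset L_S(T)$. The bracket is $\varepsilon$-skew-symmetric, since $L_D(T)$ is a Lie color algebra, so the other side $[L_S(T),L_D(T)]$ follows. Take homogeneous $x\in T$ and $D\in\mathrm{Der}(T)$, and homogeneous $y\in T$ and $D_{u,v}\in\mathrm{InnDer}(T)$. There are four brackets to check.
\begin{enumerate}
\item $[x,y]=D_{x,y}\in\mathrm{InnDer}(T)$, by definition of the bracket.
\item $[D,y]=D(y)\in T$.
\item $[x,D_{u,v}]=-\varepsilon(\bar{x},\bar{u}+\bar{v})D_{u,v}(x)=-\varepsilon(\bar{x},\bar{u}+\bar{v})[u,v,x]\in T$.
\item $[D,D_{u,v}]\in\mathrm{InnDer}(T)$. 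This is the only substantive case, and it is exactly the earlier theorem of \cite{KZ23} that $\mathrm{InnDer}(T)$ is a color ideal of $\mathrm{Der}(T)$.
\end{enumerate}

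If one wants case 4 explicitly, I would evaluate both sides on $z\in T$ and use the derivation identity for $D$:
\[
D([u,v,z])=[D(u),v,z]+\varepsilon(\bar{D},\bar{u})[u,D(v),z]+\varepsilon(\bar{D},\bar{u}+\bar{v})[u,v,D(z)].
\]
This gives
\[
[D,D_{u,v}]=D\,D_{u,v}-\varepsilon(\bar{D},\bar{u}+\bar{v})D_{u,v}\,D=D_{D(u),v}+\varepsilon(\bar{D},\bar{u})D_{u,D(v)},
\]
which is an inner derivation.

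I expect no real obstacle here. The only care needed is the sign bookkeeping in case 4. The bracket $[D,D']$ in the paper's definition should be read as $DD'-\varepsilon(\bar{D},\bar{D'})D'D$; the printed $DD'$ in the second term is a typo.
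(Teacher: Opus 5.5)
Your proof is correct. The paper states this proposition without proof, and your case check over the four bracket types is the routine verification the authors leave implicit. Its only nontrivial case is $[D,D_{u,v}]=D_{D(u),v}+\varepsilon(\bar{D},\bar{u})D_{u,D(v)}$, which is the earlier theorem that $\mathrm{InnDer}(T)$ is a color ideal of $\mathrm{Der}(T)$, and your correction of the typo to $[D,D']=DD'-\varepsilon(\bar{D},\bar{D'})D'D$ is right.
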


\begin{thm}[\cite{KZ23}]
   The set  $L_S(T) $ carries a Lie color algebra structure and it is an injective embedding of $T$, called the standard embedding of $T$. It has the property $[x,y]=0 $ for $x,y$ in $T\subset L_S(T)$ if and only if, $[x,y,z]=0 $ for all $z\in T $. Moreover, we have  $L_S(T)= T \oplus [T,T] $.
\end{thm}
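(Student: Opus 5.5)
We prove the statement, taken from \cite{KZ23}, about the standard embedding $L_S(T)=T\oplus \mathrm{InnDer}(T)$, working inside the Lie color algebra $L_D(T)$ already constructed, with the obvious bracket correction $[D,D']=DD'-\varepsilon(\bar D,\bar D')D'D$. There are three parts: $L_S(T)$ is a Lie color algebra, $T$ embeds injectively and the triple product is realized, and the characterization of $[x,y]=0$ together with $L_S(T)=T\oplus[T,T]$.

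\textbf{Step 1: closure.} Since $L_D(T)$ is a Lie color algebra, it suffices to show that $L_S(T)$ is closed under the bracket; the previous proposition, that $L_S(T)$ is a color ideal of $L_D(T)$, gives this at once. To make the argument self-contained I would check it directly on homogeneous generators:
\begin{itemize}
\item $[x,y]=D_{x,y}\in\mathrm{InnDer}(T)$;
\item $[D_{u,v},x]=[u,v,x]\in T$, and likewise for $[x,D_{u,v}]$;
\item for $[D_{u,v},D_{x,y}]$, apply \eqref{lts4} as an identity of operators acting on $z$:
\begin{equation*}
[D_{u,v},D_{x,y}]=D_{[u,v,x],y}+\varepsilon(\bar u+\bar v,\bar x)\,D_{x,[u,v,y]}\in \mathrm{InnDer}(T).
\end{equation*}
\end{itemize}
This last check is the only computation needing care: one must confirm that the sign in $DD'-\varepsilon D'D$ matches the sign $\varepsilon(\bar u+\bar v,\bar x+\bar y)$ of the third term in \eqref{lts4}. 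The grading is respected because $D_{x,y}$ has degree $\bar x+\bar y$. Hence $L_S(T)$ is a Lie color subalgebra of $L_D(T)$, so it inherits skew-symmetry and the Jacobi identity.

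\textbf{Step 2: embedding.} Take $\varphi$ to be the inclusion $T\to T\oplus 0\subset L_S(T)$; it is injective because the sum is direct. The embedding identity follows from
\begin{equation*}
[[x,y],z]=[D_{x,y},z]=D_{x,y}(z)=[x,y,z],
\end{equation*}
which is exactly the defining relation of an embedding.

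\textbf{Step 3: $[x,y]=0$ and the decomposition.} We have $[x,y]=D_{x,y}$, and this is the zero map precisely when $[x,y,z]=0$ for all $z\in T$, which is the required equivalence. Finally, $\mathrm{InnDer}(T)$ is by definition the span of the $D_{x,y}=[x,y]$, so $\mathrm{InnDer}(T)=[T,T]$ and therefore $L_S(T)=T\oplus[T,T]$.

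The main obstacle is the sign bookkeeping in Step 1; the rest follows directly from the definitions.
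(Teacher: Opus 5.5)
Your proof is correct and follows the route the paper's setup points to. The paper itself gives no proof here and defers to the cited source, having just introduced $L_D(T)$ and shown that $L_S(T)$ is a color ideal of it. Your argument uses exactly that: closure (your formula $[D_{u,v},D_{x,y}]=D_{[u,v,x],y}+\varepsilon(\bar u+\bar v,\bar x)D_{x,[u,v,y]}$ obtained from \eqref{lts4} is right, signs included), the embedding identity $[D_{x,y},z]=D_{x,y}(z)=[x,y,z]$, and $\mathrm{InnDer}(T)=[T,T]$ for the remaining two claims.
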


Let $T= \bigoplus\limits_{\gamma\in G}T_\gamma$ be a Lie color triple system. An embedding $(L,\varphi) $ of $T$ is called a universal embedding if for any embedding $\varphi':T\to L' $, there exists a unique Lie algebra homomorphism $\psi:L\to L' $ such that $\psi\circ\varphi= \varphi' $. We see from this definition that if there exists a universal embedding of $T$ then it is unique. Now, we will construct the universal enveloping color Lie algebra $L_U(T) $ of a Lie color triple system, for a base field of characteristic greater than three. For that, we use a Witt's Theorem  (\cite[Theorem $I.3.1 $]{S67},\cite[p.541]{H01}). We  construct the free color Lie algebra of $T$ using the tensor algebra of $T$ according to the PBW-Theorem which is valid if the characteristic is greater than $3$ (see \cite[p.85]{BMPZ92}).  Let $A_T = \bigoplus\limits_{n\in \mathbf{N}}T^{\otimes n} $ be the tensor algebra of $T$. Then $A_T$ inherits a $G$-graduation given by the isomorphism \begin{center}
    $A_T\cong \bigoplus\limits_{\gamma\in G} (\bigoplus\limits_{\left\{\gamma_1,\cdots,\gamma_n, n\in \mathbf{N}\mid \gamma_1+\cdots+\gamma_n= \gamma   \right\} }T_{\gamma_1}\otimes\cdots\otimes T_{\gamma_n}).  $  \end{center} Then $A_T$ is a Lie color algebra with the bracket $[x,y]= x y- \varepsilon(\bar{x},\bar{y})y x $ for all $x,y \in \mathcal{H}(A_T)$. Let $\mathcal{L} $ be the Lie subalgebra generated by the elements of $T$ inside $A_T$. Let $J$ be the ideal (of the Lie algebra) of $\mathcal{L}$ generated by elements of the form $[[x,y],z]-[x,y,z] $ for homogeneous elements in $T$ and let $L_U(T)=\mathcal{L}/J $. Then the canonical map $T\to L_U(T)$ is an embedding and $L_U(T) $ is the universal enveloping Lie algebra of $T$.

Since we have $T\hookrightarrow L_S(T) $, the universal property of $L_U(T)$ implies that $T\hookrightarrow L_U(T)$. 
\newline

From now on, we suppose that $T$ is already embedded in its standard embedding $L_S(T)$. Moreover, since the PBW-Theorem is not valid if the characteristic is $2$ or $3$, we will restrict ourselves to the characteristic greater than $3$, even if it is not necessary for all our results.

\begin{rmq}
    Assume that the characteristic of the base field $\mathbf{K} $ is $3$. The PBW-Theorem remains true if we assume in the definition of a Lie color algebra that we have in addition  $[y,[y,y]]=0 $ for all homogeneous elements $y\in L_-$. In order to keep the link between Lie color algebras and Lie triple systems, we assume for the definition of a Lie triple system that in addition $[y,y,y]=0 $ fo all homogeneous elements $y\in T_-$. As the universal property of our construction of the universal embedding is a consequence of the PBW-Theorem, it is still the universal embedding of a Lie triple system in characteristic $3$ with this assumption. Then, all the results of this paper remain true with this assumption in characteristic $3$.   
\end{rmq}
\section{Restricted Lie color algebras and Restricted Lie color triple systems}
In this section, we recall some basics about restricted Lie color algebras and introduce the concept of restricted Lie color triple systems. We discuss Jacobson's Theorem in color case and the embedding of restricted Lie color triple systems into restricted Lie color algebras.
\subsection{Restricted Lie color algebras}\label{s3}
In this section, we recall the definition of a restricted Lie color algebra. Then, we show that Jacobson's theorem remains true in this case.

\begin{defi}[\cite{F01}]
    We say that $L$ is a restricted Lie color algebra over a field $\mathbf{K}$ of characteristic $p>3$ if $L$ is a Lie color algebra equipped with a set of $p$-maps $(\cdot)^{[p]}: L_\gamma\to L_{p\gamma}$ satisfying, for all $\gamma \in G_+$, the following properties: \begin{enumerate}
        
        \item $(\alpha x)^{[p]}= \alpha^p x^{[p]} $, for all $\alpha \in \mathbf{K}$ and $x\in L_\gamma$,
        \item  $[x^{[p]},y]= (ad_x)^p(y) $, for all $x\in L_\gamma $ and $y\in L $,
        \item  $(x+y)^{[p]}= x^{[p]} + y^{[p]} + \sum\limits_{i=1}^{p-1} s_i(x,y) $, for all $ x,y\in L_\gamma$,
    \end{enumerate} where $\ad_x(y) = [x,y] $ and $is_i(x,y) $ is the coefficient of $\lambda^{i-1} $ in $(\ad_{\lambda x+y})^{p-1}(x) $. We denote by $(\cdot)^{[p]}:L_+\to L_+ $ the set of all the $p$-maps for $\gamma \in G_+ $.
\end{defi}

\begin{defi}
    Let $(L,(\cdot)^{[p]}), (L',(\cdot)^{[p]'})$ be two restricted Lie color algebras. A morphism of restricted Lie color algebras between $L$ and $L'$ is a morphism of Lie color algebras $f:L\to L' $ such that $f(x^{[p]})=f(x)^{[p]'} $ for all $x\in L$.
\end{defi}

\begin{defi}
    Let $V$ be a vector space. We say that a map $f:V\to V $ is $p$-semi-linear if $f(\alpha x+ y)= \alpha^p f(x)+ f(y) $ for all $x,y\in V$, $\alpha\in \mathbf{K}$.
\end{defi}

\begin{ex}
    \begin{enumerate}
        \item Let $A$ be a color associative algebra. Then, the color Lie algebra associated is restricted by defining for all $x\in A_\gamma $, $x^{[p]}=x^p $.
        \item Let $L$ be an abelian Lie color algebra with maps $f_\gamma:L_\gamma\to L_{p\gamma} $. Then, $L$ is restricted if and only if $f_\gamma $ is $p$-semi-linear for all  $\gamma\in G_+ $.
    \end{enumerate}
\end{ex}

For the next results, we follow the proofs presented in \cite{SF88} and \cite{E23}.
Let $L$ be a Lie color algebra and $S\subset L$ be a subset of $L$. We define the centralizer of $S$ in $L$ as \begin{center}
    $C_L(S)=\left\{ x\in L, [x,s]=0 \; \forall s\in S \right\} $.
\end{center}

\begin{prop}\label{p3.4}
    Let $L$ be a  Lie color subalgebra of a restricted Lie color algebra $H$ and let $(\cdot)^{[p]_1}:L_\gamma\to L_{p\gamma} $ be a map for all $\gamma \in G_+ $. Then, $(\cdot)^{[p]_1} $ is a $p$-map if and only if there exists a $p$-semi-linear map of degree $p$ $f_\gamma:L_\gamma\to C_H(L)$ such that $(\cdot)^{[p]_1}=(\cdot)^{[p]}+f $ for all $\gamma\in G_+$.
\end{prop}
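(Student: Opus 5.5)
The plan is to reduce both directions to a comparison of $(\cdot)^{[p]_1}$ with the restriction of the ambient $p$-map of $H$ to $L_+$. Throughout, the brackets $[x,y]$ and the maps $s_i(x,y)$ are computed in $L$ and agree with those computed in $H$, because $L$ is a subalgebra. One point needs care first: for $x\in L_\gamma$ with $\gamma\in G_+$, the element $x^{[p]}$ lies in $H_{p\gamma}$ and need not lie in $L$. Hence $f_\gamma(x):=x^{[p]_1}-x^{[p]}$ is an element of $H_{p\gamma}$, and the claim is that it lies in $C_H(L)$.

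\emph{($\Rightarrow$)} Assume $(\cdot)^{[p]_1}$ is a $p$-map on $L$, and define $f_\gamma$ as above. I check three things.

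\begin{enumerate}
\item \emph{Centrality.} For $y\in L$, axiom 2 applied in $L$ and in $H$ gives
\[
[x^{[p]_1},y]=(\ad_x)^p(y)=[x^{[p]},y],
\]
because $(\ad_x)^p(y)$ is the same element whether computed in $L$ or in $H$. Hence $[f_\gamma(x),y]=0$, so $f_\gamma(x)\in C_H(L)$. By construction $f_\gamma$ has degree $p\gamma$, i.e.\ it multiplies degrees by $p$.
\item \emph{Homogeneity.} Axiom 1 for both maps gives $f_\gamma(\alpha x)=\alpha^p f_\gamma(x)$.
\item \emph{Additivity.} For $x,y\in L_\gamma$, subtract axiom 3 for $[p]$ from axiom 3 for $[p]_1$. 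The terms $\sum_{i=1}^{p-1}s_i(x,y)$ are the same in both, since they are Lie words in $x$ and $y$, and they cancel. This leaves $f_\gamma(x+y)=f_\gamma(x)+f_\gamma(y)$.
\end{enumerate}

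So $f_\gamma$ is $p$-semi-linear.

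\emph{($\Leftarrow$)} Conversely, suppose $(\cdot)^{[p]_1}=(\cdot)^{[p]}+f_\gamma$, where $(\cdot)^{[p]_1}$ takes values in $L_{p\gamma}$ and $f_\gamma$ is $p$-semi-linear with values in $C_H(L)$. I verify the three axioms in turn.

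\begin{enumerate}
\item Axiom 1 holds because both $(\cdot)^{[p]}$ and $f_\gamma$ are $p$-homogeneous.
\item For axiom 2, take $y\in L$. Then
\[
[x^{[p]_1},y]=[x^{[p]},y]+[f_\gamma(x),y]=(\ad_x)^p(y),
\]
since $f_\gamma(x)$ centralizes $L$.
\item Axiom 3 follows from additivity of $f_\gamma$ together with axiom 3 for $(\cdot)^{[p]}$ in $H$, again using that the $s_i(x,y)$ are the same elements computed in $L$ or in $H$.
\end{enumerate}

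The only real subtlety is the one flagged at the start: $x^{[p]}$ need not lie in $L$, so $f_\gamma(x)$ lives in $H$. Centrality is then meaningful as a statement in $C_H(L)\subset H$. The statement's phrase ``degree $p$'' should be read as $f_\gamma(L_\gamma)\subset H_{p\gamma}$. The remaining point to track is the role of $\gamma\in G_+$: the color axioms of a $p$-map are imposed only on $G_+$, and for $\gamma\in G_+$ the formula for $s_i$ carries no extra sign factors. I do not expect any step to be a serious obstacle; the proof is bookkeeping once the subalgebra compatibility of $\ad$ and $s_i$ is noted.
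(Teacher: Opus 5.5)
Your proposal is correct and follows essentially the same route as the paper: axiom 2 shows the difference of the two $p$-maps lies in $C_H(L)$, axioms 1 and 3 give $p$-semi-linearity (the $s_i(x,y)$ agree in $L$ and $H$), and the converse is a direct check of the three axioms. Your choice $f_\gamma(x)=x^{[p]_1}-x^{[p]}$ is the sign consistent with $(\cdot)^{[p]_1}=(\cdot)^{[p]}+f$; the paper writes $x^{[p]}-x^{[p]_1}$, which is harmless. You also spell out the axiom-2 verification in the converse, which the paper only calls straightforward.
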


\begin{proof}
     Assume that $(\cdot)^{[p]_1} $ is a $p$-map and  let $\gamma\in G_+$. Then for all $x\in L_\gamma$, $x^{[p]}-x^{[p]_1}\in C_H(L) $. It follows that $f_\gamma(x):= x^{[p]}-x^{[p]_1} $ is $p$-semi-linear. Conversely, assume that there exists a $p$-semi-linear map $f_\gamma: L_\gamma\to C_H(L)$ such that $(\cdot)^{[p]_1}= (\cdot)^{[p]}+ f_\gamma $. Then for $x,y\in L_\gamma $, we have \begin{align*}
        (x+y)^{[p]_1} &= (x+y)^{[p]}+ f_\gamma(x+y) \\ &= x^{[p]}+y^{[p]} + \sum\limits_{i=1}^p s_i(x,y) + f_\gamma(x) +f_\gamma(y) \\ & = x^{[p]_1}+y^{[p]_1} + \sum\limits_{i=1}^ps_i(x,y). 
    \end{align*} The other condition for being a $p$-map is straightforward. 
\end{proof}
In the following, we prove the color version of Jacobson's Theorem.
\begin{thm}\label{thm3.5}
    Let $L$ be a Lie color algebra over a field $\mathbf{K} $ of characteristic greater than $3$. Assume that for all $\gamma\in G_+ $, there exists $(e_{j,\gamma})_{j\in J_\gamma} $ a basis of $L_\gamma$ such that there are $y_{j,\gamma}\in L_{p\gamma} $ with $(\ad (e_{j,\gamma}))^p= \ad (y_{j,\gamma})$ for all $j\in J_\gamma $. Then there exists exactly one $p$-map $(\cdot)^{[p]}:L_+\to L_+ $ such that $e_{j,\gamma}^{[p]}=y_{j,\gamma} $ for all $j\in J$. 
\end{thm}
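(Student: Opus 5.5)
The plan is to follow the classical argument of Strade--Farnsteiner, adapted to the color setting: embed $L$ into the associative color algebra $U(L)$, use the $p$-th power there, and correct it by central elements. By the colored PBW-Theorem (characteristic greater than $3$), $\iota:L\hookrightarrow U(L)$ is injective. The color Lie algebra associated to $U(L)$ is restricted with $u^{[p]}=u^p$ on homogeneous elements of degree in $G_+$. We will also use the adjoint representation: for $x\in L_\gamma$ with $\gamma\in G_+$, the map $\ad_x$ is a color derivation of degree $\gamma$ with $\varepsilon(\gamma,\gamma)=1$. The Leibniz rule then shows $(\ad_x)^p$ is a color derivation of degree $p\gamma$, and in $U(L)$ we have $\ad_{x^p}=(\ad_x)^p$, since $\ad_x=l_x-r_x$ with $l_x$ and $r_x$ commuting when $\varepsilon(\gamma,\gamma)=1$.

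\textbf{Existence.} Fix $\gamma\in G_+$ and a basis $(e_{j,\gamma})$ of $L_\gamma$. Set $z_{j,\gamma}=e_{j,\gamma}^p-y_{j,\gamma}\in U(L)_{p\gamma}$. By hypothesis, $\ad(z_{j,\gamma})$ vanishes on $L$, hence on all of $U(L)$ since $L$ generates it, so $z_{j,\gamma}$ lies in $C_{U(L)}(L)$. We define a $p$-semi-linear map $f_\gamma:L_\gamma\to C_{U(L)}(L)$ by $f_\gamma(\sum\alpha_je_{j,\gamma})=\sum\alpha_j^p z_{j,\gamma}$, and let $L'$ be the color subalgebra of $U(L)$ generated by $L$ and the image of $f$. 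Setting $x^{[p]_1}=x^p-f_\gamma(x)$, Proposition~\ref{p3.4} (applied to $L'\subset U(L)$, with $f$ taking values in the centralizer) shows that $(\cdot)^{[p]_1}$ is a $p$-map on the relevant graded pieces of $L'$.

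The main obstacle is to show that $x^{[p]_1}\in L_{p\gamma}$ for every $x\in L_\gamma$. Let $M=\{x\in L_\gamma: x^{[p]_1}\in L\}$. It contains every $e_{j,\gamma}$, since $e_{j,\gamma}^{[p]_1}=y_{j,\gamma}$, and it is closed under scalar multiplication. For closure under addition, we use the formula $(x+y)^{[p]_1}=x^{[p]_1}+y^{[p]_1}+\sum s_i(x,y)$. Here the $s_i(x,y)$ are Lie polynomials in $x$ and $y$, hence lie in $L$; this is where $\gamma\in G_+$ is used, so that the usual Jacobson formula for $(x+y)^p$ in $U(L)$ holds, the color signs being trivial because $\varepsilon(\gamma,\gamma)=1$. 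Hence $M=L_\gamma$. Restricting $(\cdot)^{[p]_1}$ to $L_+$ then gives a $p$-map on $L$: the bracket identity $[x^{[p]},y]=(\ad_x)^p(y)$ holds in $U(L)$ and therefore in $L$.

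\textbf{Uniqueness.} Suppose two $p$-maps agree on each basis $(e_{j,\gamma})$. By the argument in Proposition~\ref{p3.4}, their difference is $p$-semi-linear with values in the center $C_L(L)$. Since it vanishes on a basis, it vanishes identically, so the two $p$-maps coincide.
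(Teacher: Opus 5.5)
Your proposal is correct and follows essentially the same route as the paper: embed $L$ into $U(L)$ via PBW, correct the $p$-th power by a $p$-semi-linear map into $C_{U(L)}(L)$, show that the set of $x\in L_\gamma$ whose corrected $p$-th power lies in $L$ is a subspace containing the basis (using that the $s_i(x,y)$ are Lie polynomials and that the color signs are trivial for $\gamma\in G_+$), and conclude via Proposition~\ref{p3.4}. Your uniqueness argument supplies a step the paper leaves implicit. The auxiliary subalgebra $L'$ is unnecessary, since Proposition~\ref{p3.4} applies directly to $L\subset U(L)$ once $x^{[p]_1}\in L_{p\gamma}$ is known.
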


\begin{proof}
    Let $\gamma\in G_+ $. The Lie color algebra $L$ is embedded in its universal enveloping algebra $U(L)$, which is an associative algebra and so the associated Lie algebra  is restricted. Thus, in $U(L)$, $\ad_{e_{j,\gamma}}^p= \ad_{e_{j,\gamma}^p} $ and it follows that $e_{j,\gamma}^p-y_{j,\gamma}\in C_{U(L)}(L) $. We define a map $f_\gamma: L_\gamma\to C_{U(L)}(L) $, $f_\gamma(\sum\alpha_je_j):= \sum\alpha_j^p(y_{j,\gamma}-e_{j,\gamma}^p)$, where $\alpha_j\in \mathbf{K} $. Consider \begin{center}
        $ V = \left\{x\in L_\gamma , x^p+f_\gamma(x) \in L_{p\gamma}\right\} $.
    \end{center} Since $(\alpha x+y)^p+f_\gamma(\alpha x+y) = \alpha^px^p+y^p+\sum\limits_{i=1}^ps_i(\alpha x,y) + \alpha^pf_\gamma(x)+ f_\gamma(y) \in L\cap U(L)_{p\gamma}= L_{p\gamma} $, we have that $V$ is a vector space. Since $(e_{j,\gamma})_{j\in J}$ lies in $V$, $V=L_\gamma$. Then, according to Proposition~\ref{p3.4},
    %\hyperref[p3.4]{Proposition~\ref*{p3.4} },
    we see that $ (\cdot)^{[p]_1}:= x\mapsto x^p+f_\gamma(x) $ is a $p$-map of $L$ and hence $L$ is restricted.
\end{proof}

\begin{ex}
   Consider  $L= L_{(1,1,0)}\oplus L_{(1,0,1)} \oplus L_{(0,1,1)} $ defined in Example \ref{Example1}. We endow $L$ with a $p$-map using the Jacobson's Theorem. We can see that, since $p$ is odd, $\ad_{e_1}^p(e_1)=0 $, $\ad_{e_1}^p(e_2)=e_3 $ and $\ad_{e_1}^p(e_3)= e_2 $ so for all $y\in L$, $\ad_{e_1}^p(y)= [e_1,y] $. As $[e_2,e_1] = -\varepsilon(2,1)[e_1,e_2]=[e_1,e_2]=e_3 $ and $[e_2,e_3]=e_1 $, it follows that for all $y\in L $, $\ad_{e_2}^p(y)=[e_2,y] $. Similarly, for all $y\in L$, $\ad_{e_3}^p(y)=[e_3,y] $. Consequently, the identity on the basis is a $p$-map according to the Jacobson's Theorem. The center of $L$ is trivial so this $p$-map is unique. 
\end{ex}

\subsection{Restricted Lie color triple systems}\label{s4}
In this section, we define restricted Lie color triple systems and generalize some results of \cite{H01} to the color case. In particular, we see that if the standard embedding $L_S(T) $ of a Lie color triple system is restricted, so is $T$ and conversely, under some assumptions if $T$ is a restricted Lie color triple system, so is $L_D(T) $. \newline

Let $T$ be any Lie color triple system and $n\geq 3$ any positive odd integer. For elements $x_1,x_2,\dots , x_n $ in $T$, define \begin{center}
    $(x_1,x_2,\dots,x_n)= [[\cdots [[x_1,x_2,x_3],x_4,x_5],\cdots],x_{n-1},x_n] \in T$.
\end{center} For $\gamma\in G_+ $, $x,y\in T_\gamma $, the expression $(x,Zx+y,Zx+y,\cdots , Zx+y) $, with $Zx+y$ occurring $p-1$ times, is a polynomial in $Z$ with coefficients in $T$. For $i=1,\cdots,p $, we define $s_i(x,y) \in T$ by requiring $is_i(x,y)$ to be the coefficient of $Z^{i-1} $ in $(x,Zx+y,Zx+y,\cdots,Zx+y) $. For any embedding $L$ of $T$, we have \begin{center}
    $(x,Zx+y,Zx+y,\cdots,Zx+y)= [Zx+y,[Zx+y,[\cdots ,[Zx+y,x]\cdots]]] = (ad(Zx+y))^{p-1}(x) $.
\end{center} In this case, $is_i(x,y) $ is the coefficient of $Z^{i-1} $ in the expression $((\ad(Zx+y))^{p-1}(x) $, which agrees with the use of the notation $s_i(x,y) $ in the definition of a restricted Lie color algebra.

For a Lie color triple system $T$, we denote by $T_+=\bigoplus\limits_{\gamma\in G_+}T_\gamma $ and $T_-= \bigoplus\limits_{\gamma\in G_-}T_\gamma $.

\begin{defi}\label{d4.1}
    A Lie color triple system $T$ over a field of characteristic $p>3$ is restricted if it is given a set of $p$-maps $(\cdot)^{[p]}:T_\gamma\to T_{p\gamma} $ such that the following conditions are satisfied for all $\gamma\in G_+ $ : \begin{enumerate}
        
        \item $(\alpha x)^{[p]}= \alpha^p x^{[p]} $ for all $\alpha$ in $\mathbf{K}$ and all $x$ in $T_\gamma$,
        \item $(x+y)^{[p]}= x^{[p]}+y^{[p]}+ \sum\limits_{i=1}^{p-1}s_i(x,y) $ for all $x,y$ in $T_\gamma$,
        \item  $[x,y^{[p]},z]=(x,y,\cdots,y,z) $($p$ copies of y) for all $y$ in $T_\gamma$ and all $x,z$ in $T$.
    \end{enumerate}
\end{defi}

\begin{defi}
    Let $(T,(\cdot)^{[p]}), (T',(\cdot)^{[p]'})$ be two restricted Lie color triple systems. A morphism of restricted Lie color triple systems between $T$ and $T'$ is a morphism of Lie color triple systems $f:T\to T' $ such that $f(x^{[p]})=f(x)^{[p]'} $ for all $x\in T$.
\end{defi}

\begin{ex}
\begin{enumerate}
    \item   Let $L$ be a restricted Lie color algebra. Then $L$ equipped with the bracket $[x,y,z]=[[x,y],z] $ is a restricted Lie color triple system.
    \item We have seen that we can endow $L= L_{(1,1,0)}\oplus L_{(1,0,1)} \oplus L_{(0,1,1)}  $ with a restricted structure given by the identity on the basis and that this structure is unique. So the identity on the basis make $L $ into a restricted Lie color triple system. The center of $L$ as a Lie color algebra is trivial so is the center of $L$ as a Lie color triple system. Then the identity on the basis is the unique  $p$-map.
    \end{enumerate}
\end{ex}

\begin{prop}
    Let $T$ be a restricted Lie color triple system. Let $\gamma\in G_+ $. For each $x\in T_\gamma\subset L_S(T)_\gamma$, we have that $\ad_x\in Der(L_S(T)) $ satisfies $\ad_x^p= \ad_{x^{[p]}} $.
\end{prop}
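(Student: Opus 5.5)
Since $L_S(T)=T\oplus[T,T]$, where $[T,T]$ is spanned by the inner derivations $D_{u,v}$ with $[D_{u,v},z]=D_{u,v}(z)=[u,v,z]$, both $\ad_x^p$ and $\ad_{x^{[p]}}$ are linear endomorphisms of $L_S(T)$. It is therefore enough to check that they agree on elements $z\in T$ and on elements $D_{u,v}$ with $u,v\in\mathcal{H}(T)$. Both maps are color derivations of degree $p\gamma$: $\ad_{x^{[p]}}$ is inner, and $\ad_x^p$ is a derivation because $\gamma\in G_+$ makes $\varepsilon(\gamma,\gamma)=1$, so the Leibniz formula (the Proposition with $D^n([x,y])$) gives $\ad_x^p([a,b])=[\ad_x^p(a),b]+\varepsilon(p\gamma,\bar a)[a,\ad_x^p(b)]$, since binomial coefficients $\binom pk$ vanish mod $p$ for $0<k<p$. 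Because $[T,T]$ is spanned by brackets of elements of $T$, it will suffice to prove the equality on $T$ alone: if two derivations agree on $T$, they agree on $[u,v]$ for $u,v\in T$.

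\textbf{Step 1 (odd powers on $T$).} For $z\in T$ I will show $\ad_x^p(z)=[x^{[p]},z]$. Since $p$ is odd, pair the factors as $\ad_x^p(z)=\ad_x^{p-1}([x,z])$ and group the remaining $p-1$ copies of $\ad_x$ into $(p-1)/2$ pairs. Each pair $\ad_x\ad_x$ applied to $w\in T$ gives $[x,[x,w]]=[x,x,w]$, and applied to $[x,w]$ it stays inside $[T,T]$. The aim is to write $\ad_x^p(z)$ as an iterated triple product. Using skew-symmetry with $\varepsilon(\gamma,\gamma)=1$ (so $[a,x]=-[x,a]$ up to the appropriate sign factors), I expect to obtain
\[
\ad_x^p(z)=\pm(z,x,x,\dots,x,x)\quad\text{or}\quad \pm[x,\,(x,\dots)]
\]
in a form matching $(z,x,\dots,x,x)$ with $p$ copies of $x$.

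\textbf{Step 2 (rewrite via the restricted axiom).} Axiom 3 of Definition~\ref{d4.1} gives $[a,x^{[p]},b]=(a,x,\dots,x,b)$. I will take $a=z$ together with an extra element $b$: the cleanest route is to compare $[\ad_x^p(z),b]$ with $[[x^{[p]},z],b]$ for all $b\in T$. Using the description of the center in the standard embedding theorem ($[y,w]=0$ in $L_S(T)$ for $y,w\in T$ iff $[y,w,c]=0$ for all $c$), together with the fact that $T$ acts faithfully on itself through $[T,T]$, this comparison should reduce the claim to the triple-product identity $(x,\dots,x,z,b)$-type $=[x^{[p]},z,b]$. After skew-symmetry this is exactly axiom 3, with sign bookkeeping $\varepsilon(p\gamma,\bar z)=\varepsilon(\gamma,\bar z)^p$.

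\textbf{Step 3 (extension to $[T,T]$).} As noted above, equality on $T$ plus the derivation property gives equality on $[T,T]$, hence on all of $L_S(T)$.

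I expect the main obstacle to be Step 1/2: converting $\ad_x^p(z)$, which is an element of $T$ reached by alternately passing through $[T,T]$, into precisely the left-nested product $(z,x,\dots,x,z')$ form of axiom 3, with all color signs correct. I would handle it by induction on $k$, proving that $\ad_x^{2k}(w)=(w,x,x,\dots,x,x)$ up to an explicit sign for $w\in T$, using $[x,x,w]=\varepsilon(\gamma,\bar w)\dots$ rewritten via \eqref{lts2} and \eqref{lts3}.
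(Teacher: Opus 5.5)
\textbf{Overall approach.} Your architecture matches the paper's. Both $\ad_x^p$ and $\ad_{x^{[p]}}$ are color derivations of degree $p\gamma$, so it suffices to compare them on $T$. Your Leibniz-rule justification for $\ad_x^p$ is more explicit than the paper's. For $z\in T$, you then compare the two resulting elements of $[T,T]$ by bracketing with an arbitrary $b\in T$ and invoking axiom 3 of Definition~\ref{d4.1}.

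\textbf{The gap.} The core computation is only announced, and the one concrete identity you give for it is false. For $x\in T_\gamma$ with $\gamma\in G_+$ we have $[x,x]=-\varepsilon(\gamma,\gamma)[x,x]=-[x,x]$, so $[x,x]=0$ and $[x,x,w]=[[x,x],w]=0$ for every $w$. Thus $[x,[x,w]]=[x,x,w]$ would force $\ad_x^2|_T=0$. Identity \eqref{lts3} does not rescue this: with $[x,x,w]=0$ it only gives $[x,w,x]=-\varepsilon(\gamma,\bar w)[w,x,x]$.

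Your last paragraph also calls $\ad_x^p(z)$ ``an element of $T$''. Since $p$ is odd, it lies in $[T,T]$. That is exactly why testing it against all $b\in T$ is legitimate: $[T,T]=\mathrm{InnDer}(T)$ acts faithfully on $T$. Were it in $T$, that test would instead require $Z(T)=0$.

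\textbf{The fix.} Move $w$ to the front by two applications of skew-symmetry rather than re-associating. This gives $[x,[x,w]]=\varepsilon(\gamma,\bar w)^2[w,x,x]$. Since $\varepsilon(\gamma,\gamma)=1$, induction yields $\ad_x^{2k}(w)=\varepsilon(\gamma,\bar w)^{2k}(w,x,\dots,x)\in T$, with $2k$ copies of $x$. Hence
\[
\ad_x^p(z)=[x,\ad_x^{p-1}(z)]=-\varepsilon(\gamma,\bar z)^p\,[(z,x,\dots,x),x]\in[T,T],
\]
so $[\ad_x^p(z),b]=-\varepsilon(p\gamma,\bar z)(z,x,\dots,x,b)$ with $p$ copies of $x$. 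On the other side, $[[x^{[p]},z],b]=-\varepsilon(p\gamma,\bar z)[z,x^{[p]},b]$, and axiom 3 identifies the two. This is precisely the paper's computation; its ``$(y,x,\dots,x,x)\in[T,T]$'' is shorthand for $[(y,x,\dots,x),x]$. With this correction your plan goes through as stated.
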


\begin{proof}
    Since $L_S(T)= T\oplus [T,T] $, it is enough to show that the maps $(\ad_x)^p $ and $\ad_{x^{[p]}} $ agree with $T$ and so on $\mathcal{H}(T) $. Let $y\in \mathcal{H}(T)$. Since $p $ is odd, we have \begin{align*}
        (\ad_x)^p(y)= [x,[\cdots[x,y]]\cdots]&= -\varepsilon(p\bar{x},\bar{y})\varepsilon(\bar{x},\bar{x})^{\frac{p(p-1)}{2}}(y,x,\cdots,x,x) \\ &= -\varepsilon(p\bar{x},\bar{y})(y,x,\cdots,x,x) \in [T,T]
    \end{align*} and  $\ad_{x^{[p]}}(y)= [x^{[p]},y]= -\varepsilon(p\bar{x},\bar{y})[y,x^{[p]}] $. Then, for all $z\in T$, we have $[\ad_x^p(y),z]= -\varepsilon(p\bar{x},\bar{y})(y,x,\cdots,x,z) $ and $[\ad_{x^{[p]}}(y),z]= -\varepsilon(p\bar{x},\bar{y})[y,x^{[p]},z]= -\varepsilon(p\bar{x},\bar{y})(y,x,\cdots,x,z) $. So $(\ad_x)^p(y)$ and $\ad_{x^{[p]}}(y) $ agree with the  definition of $L_S(T)$.
\end{proof}

\begin{cor}
   We have $[x,y,z^{[p]}]=(x,y,z,\cdots,z) $ for all $x,y\in T, z\in T_\gamma$ for all $\gamma \in G_+$.
\end{cor}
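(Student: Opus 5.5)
We work inside the standard embedding $L_S(T)$, where $[x,y,z]=[[x,y],z]$, and use the preceding Proposition: for $z\in T_\gamma$ with $\gamma\in G_+$, $\ad_z^p=\ad_{z^{[p]}}$ on $L_S(T)$. By linearity we may take $x,y$ homogeneous. The idea is to rewrite $[x,y,z^{[p]}]$ so that $z^{[p]}$ sits in the first slot of a bracket, apply the Proposition, and then unwind the iterated adjoint back into the nested triple product $(x,y,z,\dots,z)$.

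First, by color skew-symmetry,
\[
[x,y,z^{[p]}]=[[x,y],z^{[p]}]=-\varepsilon(\bar{x}+\bar{y},p\gamma)\,[z^{[p]},[x,y]]=-\varepsilon(\bar{x}+\bar{y},p\gamma)\,\ad_z^p([x,y]).
\]
Next, we move each $z$ back to the right. For $w\in[T,T]$ homogeneous, $[z,w]=-\varepsilon(\gamma,\bar w)[w,z]$. Peeling off one $z$ at a time gives
\[
\ad_z^p([x,y])=(-1)^p\,c\,[[\cdots[[x,y],z],\dots],z],
\]
where $c$ is a product of bicharacter values $\varepsilon(\gamma,\bar x+\bar y+k\gamma)$ for $k=0,\dots,p-1$. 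Since $\varepsilon(\gamma,\gamma)=1$, this product equals $\varepsilon(\gamma,\bar x+\bar y)^p=\varepsilon(p\gamma,\bar x+\bar y)$. It cancels the prefactor $\varepsilon(\bar x+\bar y,p\gamma)$ by property 3 of the bicharacter, and $(-1)^{p}\cdot(-1)=1$ because $p$ is odd.

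Finally, we identify $[[\cdots[[x,y],z],\dots],z]$ (with $p$ copies of $z$) with $(x,y,z,\dots,z)$. Grouping adjacent pairs, $[[w,z],z]$ with $w\in T$ is exactly $[w,z,z]$. Starting from $[[x,y],z]=[x,y,z]\in T$, the remaining $p-1$ copies of $z$ (an even number) pair up into successive triple products $[\cdot,z,z]$, which is the definition of $(x,y,z,\dots,z)$ since $p+2$ is odd.

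The main obstacle is the sign and bicharacter bookkeeping in the second step: each swap contributes $\varepsilon(\gamma,\cdot)$ with a degree shifted by $k\gamma$. The check that the total factor is exactly trivial relies on $\gamma\in G_+$ and $p$ odd. If this bookkeeping turns out messier than expected, the fallback is the same computation done in the opposite direction, as in the proof of the preceding Proposition, by expanding via $[y,x^{[p]},z]$-type identities and using condition 3 of Definition~\ref{d4.1} together with (\ref{lts2}) and (\ref{lts3}).
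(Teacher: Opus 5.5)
Your proposal is correct and follows essentially the same route as the paper: apply $\ad_z^p=\ad_{z^{[p]}}$ to $[x,y]\in[T,T]$, rewrite $[z^{[p]},[x,y]]$ as $-\varepsilon(p\bar{z},\bar{x}+\bar{y})[x,y,z^{[p]}]$ by color skew-symmetry, and unwind $\ad_z^p([x,y])$ into $-\varepsilon(p\bar{z},\bar{x}+\bar{y})\varepsilon(\bar{z},\bar{z})^{\frac{p(p-1)}{2}}(x,y,z,\dots,z)$. Your bicharacter bookkeeping (using $\gamma\in G_+$ and $p$ odd) matches the paper's, so the fallback you mention is not needed.
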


\begin{proof}
Let $\gamma\in G_+, x,y\in \mathcal{H}(T), z\in T_\gamma $
    By applying the identity $(\ad_z)^p=\ad_{z^{[p]}} $ to $[x,y]\in [T,T] $, we have \begin{center}
        $[z^{[p]},[x,y]]=[z,[\cdots[z,[x,y]]]\cdots] $.
    \end{center} Since $[z^{[p]},[x,y]]= -\varepsilon(p\bar{z},\bar{x}+\bar{y})[x,y,z^{[p]}] $, we have \begin{center}
        $[z,[\cdots[z,[x,y]]]\cdots]= -\varepsilon(p\bar{z},\bar{x}+\bar{y})\varepsilon(\bar{z},\bar{z})^{\frac{p(p-1)}{2}}(x,y,z,\cdots,z)=  -\varepsilon(p\bar{z},\bar{x}+\bar{y})(x,y,z,\cdots,z)$.
    \end{center} Thus the result.
\end{proof}

\begin{prop}
    Let $L$ be a restricted Lie color algebra and $\theta$ be an involution of degree $0$ of $L$ as a restricted Lie color algebra. Then $T=\left\{x \in L, \theta(x)=-x \right\} $ is a restricted Lie color triple system.
\end{prop}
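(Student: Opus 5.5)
The strategy is to realise $T$ as a subspace of the restricted Lie color algebra $L$ that is closed under the ternary product $[[x,y],z]$ and under the $p$-map, and then to inherit the restricted structure from $L$ as in the first example following Definition~\ref{d4.1}.

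\emph{Gradedness.} Since $\theta$ has degree $0$, it preserves each $L_\gamma$. Hence $T=\bigoplus_{\gamma\in G}(T\cap L_\gamma)$ is a $G$-graded subspace.

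\emph{Closure under the triple product.} For homogeneous $x,y,z\in T$, the fact that $\theta$ is a Lie color algebra morphism gives
\[
\theta([[x,y],z])=[[\theta x,\theta y],\theta z]=(-1)^3[[x,y],z]=-[[x,y],z].
\]
So $T$ is a subspace of $L$ closed under $(x,y,z)\mapsto[[x,y],z]$. By the first example after Definition~\ref{d2.8}, $T$ is a Lie color triple system, and axioms \eqref{lts1}--\eqref{lts4} are inherited from $L$. Note also that $\theta$ acts as the identity on $[T,T]$, since $(-1)^2=1$.

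\emph{The $p$-map.} For $\gamma\in G_+$ and $x\in T_\gamma$, we set $x^{[p]}$ equal to the $p$-map of $L$. Because $\theta$ is a morphism of restricted Lie color algebras, $\theta(x^{[p]})=(\theta x)^{[p]}=(-x)^{[p]}=(-1)^p x^{[p]}=-x^{[p]}$, using that $p$ is odd. Therefore $x^{[p]}\in T\cap L_{p\gamma}=T_{p\gamma}$.

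\emph{Axioms of Definition~\ref{d4.1}.}
\begin{itemize}
\item[(1)] Semi-linearity, $(\alpha x)^{[p]}=\alpha^p x^{[p]}$, is immediate from the corresponding axiom in $L$.
\item[(2)] For additivity, observe that the elements $s_i(x,y)$ computed in $L$ via $(\ad(Zx+y))^{p-1}(x)$ coincide with the triple-system $s_i(x,y)$, by the identification $(x,Zx+y,\dots,Zx+y)=(\ad(Zx+y))^{p-1}(x)$ noted before Definition~\ref{d4.1}. These elements lie in $T$, since they are iterated triple brackets of elements of $T$. Axiom~(2) then follows from the additivity formula in $L$.
\item[(3)] For $x,z\in T$ and $y\in T_\gamma$, we need $[[x,y^{[p]}],z]=(x,y,\dots,y,z)$ with $p$ copies of $y$. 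By antisymmetry,
\[
[x,y^{[p]}]=-\varepsilon(\bar x,p\bar y)[y^{[p]},x]=-\varepsilon(\bar x,p\bar y)(\ad_y)^p(x).
\]
Next, write $(\ad_y)^p(x)$ as an iterated bracket $[y,[y,\dots,[y,x]]]$ and move each $y$ to the right using antisymmetry. Each move contributes a sign $-1$ and a factor $\varepsilon$. The accumulated factor is $-\varepsilon(p\bar y,\bar x)\,\varepsilon(\bar y,\bar y)^{p(p-1)/2}$, and the second term equals $1$ since $\gamma\in G_+$. This is the same computation as in the proof of the preceding proposition. It gives $(\ad_y)^p(x)=-\varepsilon(p\bar y,\bar x)[[\cdots[x,y],\dots],y]$. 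Bracketing with $z$, and using $\varepsilon(\bar x,p\bar y)\varepsilon(p\bar y,\bar x)=1$, we obtain $[[x,y^{[p]}],z]=[[[\cdots[x,y],y]\cdots],y],z]$.
\end{itemize}

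The main obstacle is the last step: identifying this iterated Lie bracket with $(x,y,\dots,y,z)$, which has left-nested triple brackets pairing the arguments as $[[x,y],y]$, then $[[\cdot,y],y]$, and so on. Grouping the $p$ copies of $y$ together with $z$ in pairs does this, since $p+1$ is even. One must keep track of the parity carefully: the innermost triple is $[[x,y],y]$, and the final pair is $(y,z)$, so the expression is $[[\cdot,y],z]$. If the bookkeeping of the $\varepsilon$-signs turns out to be delicate, I would instead simply invoke the preceding proposition's sign computation verbatim.
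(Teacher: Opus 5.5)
Your proof is correct and follows essentially the same route as the paper: you show that $T$ is closed under $[[\cdot,\cdot],\cdot]$ and under the $p$-map via $\theta(x^{[p]})=\theta(x)^{[p]}=-x^{[p]}$ (using that $p$ is odd), and you obtain condition (3) by writing $[y^{[p]},x]=(\ad_y)^p(x)$ and moving each $y$ to the right, where the factor $-\varepsilon(p\bar y,\bar x)$ cancels the one from antisymmetry. The step you flag as the main obstacle is actually immediate, because $[a,b,c]=[[a,b],c]$ makes $(x,y,\dots,y,z)$ literally equal to the left-nested Lie bracket $[[\cdots[[x,y],y]\cdots,y],z]$, with no further signs.
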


\begin{proof}
Let $x,y,z\in T$.
   Since $\theta([x,y,z])=\theta([[x,y],z])=[\theta(x),\theta(y),\theta(z)] = -[x,y,z] $, $T$ is closed under this bracket and $T$ is a Lie color triple system. In addition, $T$ is closed under the $p$-map. Indeed, let $\gamma\in G_+,  x \in L_\gamma$, we have  $\theta(x^{[p]})=\theta(x)^{[p]}=(-1)^px^{[p]}=-x^{[p]} $ because $p$ is odd. Let $y\in T_\gamma$, $x,z\in \mathcal{H}(T)$, in order to prove Condition $3. $ of  \hyperref[d4.1]{Definition ~\ref*{d4.1}}, just prove it for all $x,z\in \mathcal{H}(T)$. Let $x,z\in \mathcal{H}(T) $, we have \begin{center}
        $[x,y^{[p]},z]=[[x,y^{[p]}],z]=-\varepsilon(\bar{x},\bar{y^{[p]}})[[y^{[p]},x],z]= -\varepsilon(\bar{x},\bar{y})^p[[y^{[p]},x],z] $
    \end{center} and \begin{center}
        $[y^{[p]},x]=(\ad_y)^p(x)= [y,[y,[\cdots[y,x]]]\cdots]= -\varepsilon(\bar{y},\bar{x})^p(x,y,\cdots,y) $.
    \end{center} Therefore, we have $[x,y^{[p]},z]= (x,y,\cdots,y,z) $ because $\varepsilon(\bar{x},\bar{y})\varepsilon(\bar{y},\bar{x})=1 $. Thus, $T$ is restricted.
\end{proof}

\begin{lem}
Let $D\in \mathrm{Der}(T)_{\alpha}$ with $\alpha\in G_+ $. Then, for any integer $n\geq 1, x,y\in \mathcal{H}(T), z\in T$, we have \begin{center}
        $D^n([x,y,z])= \sum\limits_{i+j+k=n} \binom{n}{ijk}\varepsilon(\alpha,\bar{x})^j\varepsilon(\alpha,\bar{x}+\bar{y})^k[D^i(x),D^j(y),D^k(z)], $
    \end{center} where $\binom{n}{ijk}= \frac{n!}{i!j!k!} $.
\end{lem}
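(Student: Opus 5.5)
We argue by induction on $n$, in the same way as the color Leibniz rule for Lie color algebras proved above. For $n=1$ the formula is exactly the definition of a color derivation of degree $\alpha$: the three terms with $(i,j,k)=(1,0,0),(0,1,0),(0,0,1)$ carry the coefficients $1$, $\varepsilon(\alpha,\bar{x})$ and $\varepsilon(\alpha,\bar{x}+\bar{y})$, and all multinomial coefficients equal $1$. (The case $n=0$ is trivial if one wants to start there.)

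For the inductive step, assume the formula for $n$ and apply $D$ to each term $[D^i(x),D^j(y),D^k(z)]$. Since $D$ is homogeneous of degree $\alpha$, the element $D^i(x)$ is homogeneous of degree $\bar{x}+i\alpha$, and $D^j(y)$ has degree $\bar{y}+j\alpha$. The derivation rule therefore gives
\begin{align*}
D[D^i(x),D^j(y),D^k(z)] = {} & [D^{i+1}(x),D^j(y),D^k(z)] + \varepsilon(\alpha,\bar{x}+i\alpha)[D^i(x),D^{j+1}(y),D^k(z)] \\
& + \varepsilon(\alpha,\bar{x}+\bar{y}+(i+j)\alpha)[D^i(x),D^j(y),D^{k+1}(z)].
\end{align*}
The key point is that $\alpha\in G_+$, so $\varepsilon(\alpha,\alpha)=1$ and hence $\varepsilon(\alpha,\bar{x}+i\alpha)=\varepsilon(\alpha,\bar{x})$ and $\varepsilon(\alpha,\bar{x}+\bar{y}+(i+j)\alpha)=\varepsilon(\alpha,\bar{x}+\bar{y})$ by bimultiplicativity. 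This is the only place where the hypothesis on $\alpha$ enters. Without it, extra signs $\varepsilon(\alpha,\alpha)^{i}$ would appear and the formula would fail.

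After this simplification, the three new terms have $\varepsilon$-weights $\varepsilon(\alpha,\bar{x})^{j'}\varepsilon(\alpha,\bar{x}+\bar{y})^{k'}$, where $(i',j',k')$ is the new multi-index, namely $(i+1,j,k)$, $(i,j+1,k)$ or $(i,j,k+1)$. We then reindex the three sums and collect the coefficient of $[D^{i}(x),D^{j}(y),D^{k}(z)]$ for $i+j+k=n+1$. That coefficient is
$$\binom{n}{(i-1)jk}+\binom{n}{i(j-1)k}+\binom{n}{ij(k-1)},$$
with the convention that terms with a negative index vanish. By the trinomial Pascal identity this sum equals $\binom{n+1}{ijk}$, which is verified directly from $\frac{n!}{i!j!k!}(i+j+k)$. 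This completes the induction.

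The only real subtlety is the degree bookkeeping in the sign factors, which is handled by $\alpha\in G_+$. The reindexing is routine; it could be organized alternatively by viewing $D$ as acting on the three slots with commuting operators, but the direct induction suffices.
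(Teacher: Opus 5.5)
Your proof is correct and follows the same route as the paper. The paper also argues by induction on $n$: it applies $D$ termwise via the derivation rule and collects coefficients using $\binom{n}{(i-1)jk}+\binom{n}{i(j-1)k}+\binom{n}{ij(k-1)}=\binom{n+1}{ijk}$. You also make explicit a step the paper leaves implicit: $\varepsilon(\alpha,\bar{x}+i\alpha)=\varepsilon(\alpha,\bar{x})$ and $\varepsilon(\alpha,\bar{x}+\bar{y}+(i+j)\alpha)=\varepsilon(\alpha,\bar{x}+\bar{y})$, because $\varepsilon(\alpha,\alpha)=1$ for $\alpha\in G_+$.
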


\begin{proof}
   We perform by induction on $n$. The case $n=1$ is clear. Assume that it is true for $n\geq 1$. Let $x,y\in \mathcal{H}(T),z\in T $. We have \begin{align*}
       & D^{n+1}([x,y,z]) = D( \sum\limits_{i+j+k=n} \tbinom{n}{ijk}\varepsilon(\alpha,\bar{x})^j\varepsilon(\alpha,\bar{x}+\bar{y})^k[D^i(x),D^j(y),D^k(z)]) \\ =& \sum\limits_{i+j+k=n} \tbinom{n}{ijk}\varepsilon(\alpha,\bar{x})^j\varepsilon(\alpha,\bar{x}+\bar{y})^k([D^{i+1}x,D^jy,D^kz] +\varepsilon(\alpha,\bar{x})[D^ix,D^{j+1}y,D^kz] \\ & \quad \quad\quad\quad\quad\quad\quad\quad\quad\quad\quad\quad\quad\quad\quad\quad+ \varepsilon(\alpha,\bar{x}+\bar{y})[D^ix,D^jy,D^{k+1}z]).
    \end{align*} As in \cite{H01}, the coefficient of the term $[D^lx,D^my, D^kz ] $  is $\binom{n}{(l-1)mk}+ \binom{n}{l(m-1)k}+ \binom{n}{lm(k-1)} = \binom{n+1}{lmk}$ and the result follows.
\end{proof}

\begin{cor}
    Let $T$ be a Lie color triple system over a field of characteristic $p$. Let $D$ be a derivation of $T$. Then $D^p$ is also a derivation of $T$.
\end{cor}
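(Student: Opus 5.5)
The plan is to apply the preceding Lemma with $n=p$ and then use that multinomial coefficients vanish modulo $p$. The Lemma is stated for $D\in \mathrm{Der}(T)_\alpha$ with $\alpha\in G_+$, so I work with such a homogeneous $D$. This is the case relevant to $p$-maps, and it is the one where $D^p$ has a well-defined degree $p\alpha$ with the correct twist.

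For $x,y\in \mathcal{H}(T)$ and $z\in T$, the Lemma gives
\begin{center}
$D^p([x,y,z])= \sum\limits_{i+j+k=p} \binom{p}{ijk}\varepsilon(\alpha,\bar{x})^j\varepsilon(\alpha,\bar{x}+\bar{y})^k[D^i(x),D^j(y),D^k(z)]$.
\end{center}
The multinomial coefficient $\binom{p}{ijk}=\frac{p!}{i!j!k!}$ is divisible by $p$ unless one of $i,j,k$ equals $p$. Indeed, if all three are $<p$, then $p$ divides the numerator but not the denominator. In characteristic $p$ only three terms survive, and each has coefficient $1$:
\begin{center}
$D^p([x,y,z])=[D^p(x),y,z]+\varepsilon(\alpha,\bar{x})^p[x,D^p(y),z]+\varepsilon(\alpha,\bar{x}+\bar{y})^p[x,y,D^p(z)]$.
\end{center}

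By bimultiplicativity of $\varepsilon$, we have $\varepsilon(\alpha,\bar{x})^p=\varepsilon(p\alpha,\bar{x})$, and similarly for $\bar{x}+\bar{y}$. Since $D(T_\gamma)\subset T_{\gamma+\alpha}$, iterating gives $D^p(T_\gamma)\subset T_{\gamma+p\alpha}$. So $D^p$ satisfies exactly the defining identity of a color derivation of degree $p\alpha$. Linearity of the bracket in $x$ and $y$ then extends the identity from homogeneous elements to all of $T$.

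The only delicate point is the hypothesis on the degree. For a general $D\in\mathrm{Der}(T)$ that is not homogeneous, or is of degree in $G_-$, the Lemma as stated does not apply, and $D^p$ need not be a color derivation in the graded sense. I would therefore phrase the Corollary for homogeneous $D$ of degree $\alpha\in G_+$, concluding $D^p\in \mathrm{Der}(T)_{p\alpha}$. This is the statement needed later for $p$-maps on $L_D(T)$.
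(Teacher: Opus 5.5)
Your proof is correct and is essentially the paper's argument: the paper just sets $n=p$ in the preceding Lemma and reads off the three surviving terms, while you also justify explicitly that $\binom{p}{ijk}$ vanishes modulo $p$ and that $\varepsilon(\alpha,\bar{x})^p=\varepsilon(p\alpha,\bar{x})$. Restricting to homogeneous $D$ of degree $\alpha\in G_+$ is exactly the hypothesis the Lemma, and hence the paper's proof, implicitly requires, and it is all that is needed later for the $p$-map $D^{[p]}=D^p$ on $\mathrm{Der}(T)$.
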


\begin{proof}
    For $n=p$ the formula above gives \begin{center}
        $D^p([x,y,z])= [D^p(x),y,z]+\varepsilon(\alpha,\bar{x})^p[x,D^p(y),z]+\varepsilon(\alpha,\bar{x}+\bar{y})^p[x,y,D^p(z)] $.
    \end{center}
\end{proof}

\begin{prop}
    Let $T$ be a Lie color triple system. Then, $\mathrm{Der}(T)$ is a restricted Lie color algebra.
\end{prop}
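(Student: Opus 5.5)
The $p$-map will be $D\mapsto D^p$. I will check the three axioms of a restricted Lie color algebra inside the associative color algebra $\mathrm{End}(T)$, where $\mathrm{Der}(T)$ sits as a Lie color subalgebra for the bracket $[D_i,D_j]=D_iD_j-\varepsilon(i,j)D_jD_i$. By the theorem of \cite{KZ23}, $\mathrm{Der}(T)$ is a Lie color algebra with this bracket.

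\textbf{Step 1: the map is well defined.} Let $\gamma\in G_+$ and $D\in\mathrm{Der}(T)_\gamma$. The preceding corollary, obtained from the color Leibniz lemma with $n=p$, says that $D^p$ is again a color derivation. For the degree, $D^p(T_\beta)\subset T_{\beta+p\gamma}$, so $D^p$ is homogeneous of degree $p\gamma$. It is a derivation of that degree because $\varepsilon(\gamma,\bar{x})^p=\varepsilon(p\gamma,\bar{x})$ by the bicharacter property. Hence $(\cdot)^{[p]}:\mathrm{Der}(T)_\gamma\to\mathrm{Der}(T)_{p\gamma}$ is well defined. Homogeneity of the map, $(\alpha D)^p=\alpha^pD^p$, is immediate.

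\textbf{Step 2: the adjoint identity and the sum formula.} Both statements hold in $\mathrm{End}(T)$ viewed as a color associative algebra. The first example after the definition of a restricted Lie color algebra says that its associated Lie color algebra is restricted with $x^{[p]}=x^p$. I will still verify the identities directly, since that is where the hypothesis $\gamma\in G_+$ enters.

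For $D\in\mathrm{Der}(T)_\gamma$ with $\gamma\in G_+$ and $E$ homogeneous, write $\mathrm{ad}_D=L_D-\varepsilon(\gamma,\bar{E})R_D$, where $L_D$ and $R_D$ are left and right multiplication by $D$. These two operators commute, and the scalar attached to $R_D$ changes at each step only by $\varepsilon(\gamma,\gamma)=1$. The binomial theorem in characteristic $p$ then gives
$$\mathrm{ad}_D^p(E)=D^pE-\varepsilon(p\gamma,\bar{E})ED^p=[D^p,E].$$

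For the sum formula, take $D_1,D_2\in\mathrm{Der}(T)_\gamma$. On the degree-$\gamma$ component the color bracket restricts to the ordinary commutator, because $\varepsilon(\gamma,\gamma)=1$. Jacobson's classical identity $(a+b)^p=a^p+b^p+\sum s_i(a,b)$ holds in any associative algebra, with $is_i$ the coefficient of $\lambda^{i-1}$ in $\mathrm{ad}_{\lambda a+b}^{p-1}(a)$. Applying it to $D_1,D_2$ gives the third axiom, and each $s_i(D_1,D_2)$ lies in $\mathrm{Der}(T)$ because $\mathrm{Der}(T)$ is closed under the bracket.

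\textbf{Main obstacle.} The only real subtlety is making sure the color signs disappear in the two computations of Step 2. This is why the $p$-map is defined only on $G_+$: there $\varepsilon(\gamma,\gamma)=1$, so the sub-Lie algebra generated by elements of degree $\gamma$ carries the ordinary commutator. I would also state explicitly that $s_i(D_1,D_2)$ has degree $p\gamma$, so both sides of the sum formula lie in $\mathrm{Der}(T)_{p\gamma}$.
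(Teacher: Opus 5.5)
Your proposal is correct and takes the same route as the paper: $D^p$ is a color derivation of degree $p\gamma$ by the Leibniz corollary, and the restricted axioms are inherited from the associative color algebra $\mathrm{End}(T)$, whose associated Lie color algebra is restricted under $x\mapsto x^p$. The paper just cites that example, whereas you verify the adjoint identity and Jacobson's sum formula directly, correctly using $\varepsilon(\gamma,\gamma)=1$ for $\gamma\in G_+$.
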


\begin{proof}
    So, $\mathrm{Der}(T) $ is stable under the $p^{th} $ power and then is restricted with the $p$-map $D^{[p]}=D^p $.
\end{proof}

\begin{thm}\label{thm4.9}
    Suppose that $T$ is a Lie color triple system and that $L_S(T)$ is a restricted Lie color algebra with a $p$-map $(\cdot)^{[p]} $. If $T$ is closed under $(\cdot)^{[p]} $, then $T$ is a restricted Lie color triple system.
\end{thm}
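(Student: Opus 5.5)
We take as $p$-map on $T$ the restriction of the $p$-map of $L_S(T)$. The hypothesis says $T$ is stable under it, so for each $\gamma\in G_+$ we obtain maps $(\cdot)^{[p]}:T_\gamma\to T_{p\gamma}$. It remains to check the three conditions of Definition~\ref{d4.1}.

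Condition 1 is immediate: it is the $p$-semilinearity of the $p$-map of $L_S(T)$ restricted to $T_\gamma$.

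For condition 2, we use the remark preceding Definition~\ref{d4.1}. For $x,y\in T_\gamma$, the triple-system polynomial $(x,Zx+y,\dots,Zx+y)$ equals $(\ad(Zx+y))^{p-1}(x)$ computed in any embedding, in particular in $L_S(T)$. Hence the $s_i(x,y)$ of the triple system coincide with the $s_i(x,y)$ of the Lie color algebra $L_S(T)$. The Lie color algebra identity $(x+y)^{[p]}=x^{[p]}+y^{[p]}+\sum s_i(x,y)$ therefore gives condition 2 verbatim. The one point to verify is that the sign conventions agree. Writing the nested brackets $[Zx+y,[\cdots,[Zx+y,x]]]$ as the left-normed product $(x,Zx+y,\dots)$ introduces factors $\varepsilon(\bar x,\bar x)$ and similar. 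Since all entries lie in $T_\gamma$ with $\gamma\in G_+$, every such factor equals $1$, and an even number $p-1$ of reversals $[u,v]=-\varepsilon[v,u]$ contributes the sign $(-1)^{p-1}=1$. The paper asserts this identification, so we simply invoke it.

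Condition 3 is the main computation, though it is routine. Let $y\in T_\gamma$ and $x,z\in\mathcal{H}(T)$; by linearity this suffices. Then
$$[x,y^{[p]},z]=[[x,y^{[p]}],z]=-\varepsilon(\bar x,p\bar y)[[y^{[p]},x],z].$$
Using $[y^{[p]},x]=(\ad_y)^p(x)$ in $L_S(T)$, we rewrite $[y,[y,\dots,[y,x]]]$ as a left-normed bracket. Each of the $p$ reversals contributes $-\varepsilon(\cdot,\bar y)$ with accumulated degree. Exactly as in the proof of the proposition on involutions, this yields
$$(\ad_y)^p(x)=-\varepsilon(\bar y,\bar x)^p\,\varepsilon(\bar y,\bar y)^{p(p-1)/2}\,[[\cdots[[x,y],y],\cdots],y],$$
where $\varepsilon(\bar y,\bar y)=1$. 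Bracketing with $z$ and grouping the brackets in pairs via $[[a,b],c]=[a,b,c]$ gives $(x,y,\dots,y,z)$; since $p+1$ is even, the left-normed Lie product of $x,y,\dots,y,z$ is exactly the triple product $(x,y,\dots,y,z)$. The prefactors combine to $\varepsilon(\bar x,\bar y)^p\varepsilon(\bar y,\bar x)^p=1$, so $[x,y^{[p]},z]=(x,y,\dots,y,z)$. The only real obstacle is this sign bookkeeping, which we handle by using bimultiplicativity and $\bar y\in G_+$, mirroring the earlier computation.
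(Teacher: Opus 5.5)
Your proposal is correct and takes essentially the same route as the paper. You restrict the $p$-map of $L_S(T)$ to $T$ and note that conditions 1 and 2 are inherited, with the $s_i$ agreeing by the remark before Definition~\ref{d4.1}. You then obtain condition 3 by rewriting $[[x,y^{[p]}],z]$ via $[y^{[p]},x]=(\ad_y)^p(x)$ and passing to a left-normed bracket, where $\varepsilon(\bar x,\bar y)^p\varepsilon(\bar y,\bar x)^p=1$. Your sign bookkeeping is somewhat more explicit than the paper's, which simply calls conditions 1 and 2 immediate: you track the factor $\varepsilon(\bar y,\bar y)^{p(p-1)/2}=1$, and you group the $p+2$ entries into triple products.
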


\begin{proof}

    Since $T\subset L_S(T)$, Conditions $(1),(2)$ are immediate. Let $\gamma\in G_+ ,x\in L_S(T)_\gamma, y\in \mathcal{H}(L_S(T))$. We have \begin{align*}
         [x^{[p]},y]=(\ad_x)^p(y)& = [x,[x,[\cdots[x,y]]]\cdots]\\ & = -\varepsilon(\bar{x},\bar{y})^p(y,x,\cdots,x) .\end{align*} Therefore for all $x,z\in \mathcal{H}(T)$,$y\in T_\gamma$, we have \begin{align*}
             [x,y^{[p]},z]=[[x,y^{[p]}],z] &=-\varepsilon(\bar{x},\bar{y^{[p]}})[[y^{[p]},x],z]\\ &=\varepsilon(\bar{x},\bar{y})^p\varepsilon(\bar{y},\bar{x})^p(x,y,\cdots,y,z)\\ &=(x,y,\cdots,y,z) . \end{align*} Thus $T$ is a restricted Lie color triple system.
\end{proof}
% Theorem environments

\begin{thm}\label{thm4.10}
    Suppose that $T$ is a Lie color triple system and that $L_S(T)$ is a restricted Lie color algebra. Then $T$ is closed under the $p$-map of $L_S(T)$.
\end{thm}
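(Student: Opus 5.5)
The idea is to use the natural $\mathbb{Z}_2$-splitting $L_S(T)=T\oplus[T,T]$, with $[T,T]\subset \mathrm{InnDer}(T)$. We show that the $[T,T]$-component of $x^{[p]}$ acts trivially on $T$, and therefore vanishes, since elements of $\mathrm{InnDer}(T)$ are endomorphisms of $T$.

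Fix $\gamma\in G_+$ and $x\in T_\gamma$. Since $x^{[p]}\in L_S(T)_{p\gamma}=T_{p\gamma}\oplus \mathrm{InnDer}(T)_{p\gamma}$, write $x^{[p]}=a+b$ with $a\in T_{p\gamma}$ and $b\in[T,T]_{p\gamma}$. The goal is to prove $b=0$.

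First, record the parity behaviour of the bracket in $L_S(T)$ from its definition:
\[
[T,T]\subset[T,T],\qquad [[T,T],T]\subset T,\qquad [[T,T],[T,T]]\subset[T,T].
\]
In particular, $\ad_x$ for $x\in T$ maps $T$ into $[T,T]$ and $[T,T]$ into $T$. Since $p$ is odd, $(\ad_x)^p$ also exchanges $T$ and $[T,T]$.

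Next, use the defining property $[x^{[p]},y]=(\ad_x)^p(y)$ of the $p$-map on $L_S(T)$.
\begin{itemize}
\item Take $y\in T$. Then $(\ad_x)^p(y)\in[T,T]$, while $[a,y]\in[T,T]$ and $[b,y]\in T$. Comparing components in the direct sum gives $[b,y]=0$.
\item Take $y\in[T,T]$. Then $(\ad_x)^p(y)\in T$, while $[a,y]\in T$ and $[b,y]\in[T,T]$. Hence $[b,y]=0$ here as well.
\end{itemize}
So $b$ is central in $L_S(T)$. In fact only the first case is needed below.

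Finally, by the definition of the bracket on $L_D(T)$, for $D\in\mathrm{InnDer}(T)$ and $y\in T$ we have $[D,y]=D(y)$. Thus $b(y)=0$ for all $y\in T$, so $b$ is the zero endomorphism of $T$, i.e. $b=0$ in $\mathrm{InnDer}(T)$. Hence $x^{[p]}=a\in T_{p\gamma}$, which shows that $T$ is closed under the $p$-map.

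The only delicate point is keeping the graded components straight: one must check that $a$ and $b$ are homogeneous of degree $p\gamma$, and that the bracket respects the splitting into $T$ and $[T,T]$ (up to $\varepsilon$-signs, which play no role in the component argument). Once these are in place, the argument reduces to comparing components.

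Combined with Theorem~\ref{thm4.9}, this yields that $T$ is a restricted Lie color triple system with the restriction of the $p$-map of $L_S(T)$.
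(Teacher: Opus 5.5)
Your proof is correct and is essentially the paper's argument. The paper encodes the splitting $L_S(T)=T\oplus[T,T]$ through the involution $\theta$ ($-\mathrm{id}$ on $T$, $\mathrm{id}$ on $[T,T]$) and shows that $\theta(x^{[p]})+x^{[p]}$, which is $2b$ in your notation, is central and lies in $[T,T]$. It then concludes from $Z(L_S(T))\cap[T,T]=0$, which it proves by exactly your observation that an inner derivation annihilating $T$ is zero. Your remark that only the case $y\in T$ is needed makes your version a slight streamlining of the same idea.
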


\begin{proof}
    As in the proof of \cite[Theorem $3.17 $ ]{H01}, we see $T$ is the $(-1)$-eigenspace of the involution $\theta$ of $L_S(T)$ such that $\theta_{\mid T}=-id $ and $\theta_{\mid [T,T]}=id $, therefore we just need to show that $\theta(x^{[p]})=-x^{[p]} $ for all $x\in \mathcal{H}(T_+)$. Since $L_S(T)$ is restricted, we have for all $y\in T$, \begin{center}
        $[x^{[p]},y]= (\ad_x)^p(y) $.
    \end{center} If $y\in \mathcal{H}(T) $, it follows that \begin{center}
        $(\ad_x)^p(y)= [x,[x,[\cdots[x,y]]]]]= -\varepsilon(p\bar{x},\bar{y})(y,x,\cdots,x) $.
    \end{center} Since $p$ is odd, it is the bracket in $L_S(T)$ of two elements in $T$ so $(\ad_x)^p(y)\in [T,T] $. Since $\theta$ is the identity on $[T,T]$, applying $\theta$ to the equality above gives \begin{center}
        $[\theta(x^{[p]}),\theta(y)]=(\ad_x)^p(y) $.
    \end{center} Therefore \begin{center}
        $-[\theta(x^{[p]}),y]= [x^{[p]},y] $.
    \end{center} We showed that $[\theta(x^{[p]}),y]=[-x^{[p]},y] $ for all $y\in T$. We replace $y$ by an element of the form $[y,z]\in [T,T] $ for $y,z\in \mathcal{H}(T) $, we may repeat the calculations above from \begin{center}
        $[x^{[p]},[y,z]]= (\ad_x)^p([y,z]).$ \end{center} Then,  we get \begin{center}
        $(\ad_x)^p([y,z])= -\varepsilon(p\bar{x},\bar{y}+\bar{z})(y,z,x,\cdots,x) $.
    \end{center} So now $(\ad_x)^p([y,z])\in T $. Applying $\theta$, we obtain \begin{center}
        $[\theta(x^{[p]}),[y,z]]= -(\ad_x)^p([y,z]) $.
    \end{center} It follows that $[\theta(x^{[p]}),[y,z]]=[-x^{[p]},[y,z]] $ for all $y,z\in \mathcal{H}(T) $ and thus for all $y,z\in T $. We may conclude that $$[\theta(x^{[p]}),\sum_i[y_i,z_i]]=[-x^{[p]},\sum_i[y_i,z_i]] $$ for all $\sum_i[y_i,z_i] \in [T,T] $ and then  $$[\theta(x^{[p]}),y]=[-x^{[p]},y] $$ for all $y\in L_S(T)$. So $\theta(x^{[p]})+ x^{[p]}\in Z(L_S(T)) $. Since $\theta(\theta(x^{[p]})+x^{[p]})= \theta(x^{[p]})+x^{[p]} $, we can see that $\theta(x^{[p]})+x^{[p]}\in [T,T] $. However, $Z(L_S(T))\cap [T,T]=0 $. Indeed, if $h\in Z(L_S(T))\cap [T,T]$, $[h,c]=0 $ for all $c\in L_S(T)$. In particular, $h= \sum[x_i,y_i]$ and $[\sum_i[x_i,y_i],z]=0 $ for all $z\in T$, so $h=0$ by construction of $L_S(T)$. Hence, the result follows.
\end{proof}

\begin{cor}\label{c4.11}
    Let $T$ be a Lie color triple system. If $L_S(T) $ is a restricted Lie color algebra, then $T$ is a restricted Lie color triple system.
\end{cor}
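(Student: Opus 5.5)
This corollary follows by chaining the two preceding theorems, with no new computation. Suppose $L_S(T)$ is a restricted Lie color algebra with $p$-map $(\cdot)^{[p]}:L_S(T)_+\to L_S(T)_+$. Since $T$ is identified with its image in the standard embedding, $T_\gamma\subset L_S(T)_\gamma$ for every $\gamma\in G_+$, so the $p$-map is defined on every element of $T_+$.

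The first step is to invoke Theorem~\ref{thm4.10}. It shows that $T$ is closed under the $p$-map of $L_S(T)$, that is, $x^{[p]}\in T_{p\gamma}$ for all $x\in T_\gamma$ with $\gamma\in G_+$. Recall the mechanism: $T$ is the $(-1)$-eigenspace of the involution $\theta$ with $\theta|_T=-\mathrm{id}$ and $\theta|_{[T,T]}=\mathrm{id}$. One shows that $\theta(x^{[p]})+x^{[p]}$ is central, and then uses $Z(L_S(T))\cap[T,T]=0$ to conclude that this element vanishes.

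The second step is to apply Theorem~\ref{thm4.9}, whose hypotheses are now exactly satisfied. Restricting the $p$-map of $L_S(T)$ to $T_+$ then gives the structure of a restricted Lie color triple system, in the sense of Definition~\ref{d4.1}.

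Checking this structure amounts to three points. Semi-linearity and the sum formula are inherited directly from $L_S(T)$. The sum formula uses the fact, noted before Definition~\ref{d4.1}, that the $s_i(x,y)$ computed in $T$ via the iterated brackets $(x,Zx+y,\dots,Zx+y)$ coincide with those computed from $(\ad(Zx+y))^{p-1}(x)$ in any embedding. The identity $[x,y^{[p]},z]=(x,y,\dots,y,z)$ follows from $[y^{[p]},\cdot]=(\ad_y)^p$ together with the sign bookkeeping $\varepsilon(\bar x,\bar y)^p\varepsilon(\bar y,\bar x)^p=1$.

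There is no real obstacle here, since all the substantive work is in Theorem~\ref{thm4.10}. The only point to state explicitly is that the $p$-map on $T$ is the restriction of that of $L_S(T)$, so the resulting structure is the same $p$-map.
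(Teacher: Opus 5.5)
Your proposal is correct and matches the paper's proof: the corollary follows by first applying Theorem~\ref{thm4.10} to get that $T$ is closed under the $p$-map of $L_S(T)$, and then Theorem~\ref{thm4.9} to conclude that the restricted $p$-map makes $T$ a restricted Lie color triple system. Your summaries of the two underlying arguments are also faithful to the paper, namely the involution argument using $Z(L_S(T))\cap[T,T]=0$ and the sign bookkeeping for $[x,y^{[p]},z]$.
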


\begin{proof}
    It follows from  \hyperref[thm4.9]{Theorem ~\ref*{thm4.9}} and \hyperref[thm4.10]{Theorem ~\ref*{thm4.10}}.
\end{proof}

Now, we will show, as in (\cite[Theorem $3.25 $]{H01}), that if the center of $T$ is trivial, a $p$-map on $T$ can be extended as a $p$-map of $L_D(T)$.

\begin{lem}\label{lem4.12}
    Let $L$ be a Lie color algebra. Let $\gamma\in G_+, x\in T_\gamma, y,z\in \mathcal{H}(T) $. Then, we have \begin{center}
        $-\sum\limits_{i=1}^p \varepsilon(\bar{z},\bar{x})^{i-1}(y,x,\cdots,x,z,x,\cdots,x)= \varepsilon(\bar{y},\bar{z}+(p-1)\bar{x})(z,x,\cdots,x,y) $,
    \end{center} where in the left hand side, $x$ occurs $p-1$ times in the bracket, and in the $ith$ summand $z$ appears at the position $i+1 $ starting from $y$.
\end{lem}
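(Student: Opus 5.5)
The plan is to prove the identity inside the standard embedding $L_S(T)$ (where $T$ sits as in the statement; the ``$L$'' in the statement should be read as $T\subset L_S(T)$). The strategy is to rewrite both sides as operators on $L_S(T)$ applied to $y$, and then use the characteristic $p$ identity $\sum_{k=0}^{p-1}X^{k}ZX^{p-1-k}=(\mathrm{ad}\,X)^{p-1}(Z)$ in a color-twisted form.

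\textbf{Step 1 (right multiplications).} For homogeneous $w$, let $R_w(a)=[a,w]$ in $L_S(T)$. Since $(a,b,c)=[[a,b],c]$ in any embedding, a bracket $(y,w_1,\dots,w_p)$ with $p$ odd equals $R_{w_p}\cdots R_{w_1}(y)$. The $i$-th summand on the left-hand side is therefore
$$\varepsilon(\bar z,\bar x)^{i-1}R_x^{\,p-i}R_zR_x^{\,i-1}(y).$$

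\textbf{Step 2 (colored commutator of right multiplications).} Define $\delta(A)=R_xA-\varepsilon(\bar x,\bar A)AR_x$ on homogeneous operators. Using the color Jacobi identity and skew-symmetry, I would check that $\delta(R_w)=c\,R_{[w,x]}$ for an explicit scalar $c$. Concretely, $[[a,w],x]-\varepsilon(\bar x,\bar w)[[a,x],w]$ equals $[a,[w,x]]$ up to $\varepsilon$-factors, and this is $-\varepsilon(\bar a,\bar w+\bar x)R_{[w,x]}(a)$ after reordering; the dependence on $\bar a$ must be absorbed by working with a suitably twisted $R$. Iterating gives
$$\delta^{p-1}(R_z)=\pm R_{[\cdots[z,x],\dots,x]}=\pm R_{(z,x,\dots,x)},$$
with $p-1$ copies of $x$. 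This uses that $p-1$ is even, so that $(z,x,\dots,x)\in T$.

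\textbf{Step 3 (the characteristic $p$ expansion).} Expand $\delta^{p-1}(R_z)=\sum_k\binom{p-1}{k}(-1)^k\varepsilon(\bar x,\bar z)^{k}R_x^{\,p-1-k}R_zR_x^{\,k}$. Here $\varepsilon(\bar x,\bar x)=1$ because $\bar x\in G_+$, so the degree twists from the intermediate $R_x$'s disappear. Then use $\binom{p-1}{k}\equiv(-1)^k \pmod p$ and $\varepsilon(\bar x,\bar z)^{-1}=\varepsilon(\bar z,\bar x)$. This produces exactly $\sum_{i=1}^{p}\varepsilon(\bar z,\bar x)^{i-1}R_x^{\,p-i}R_zR_x^{\,i-1}$, i.e.\ the left-hand side without its minus sign.

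\textbf{Step 4 (conclusion).} Applying Step 3 to $y$ gives $\mp[y,(z,x,\dots,x)]$. Skew-symmetry then turns this into $\pm\varepsilon(\bar y,\bar z+(p-1)\bar x)[(z,x,\dots,x),y]$, and this last bracket is the right-hand side term $(z,x,\dots,x,y)$. It remains to check that the overall sign is the one claimed.

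The main obstacle is the sign and $\varepsilon$ bookkeeping in Step 2, where the color commutator of right multiplications is not purely a function of $w$ and $x$ but also depends on the degree of the argument. I would handle this first by restricting all operators to a fixed homogeneous degree component, tracking $\bar y$ explicitly. If that becomes unwieldy, I would instead use left multiplications $\mathrm{ad}$: rewrite each summand via skew-symmetry as $\pm\,\mathrm{ad}$-words applied to $y$, so that the genuine derivation $\mathrm{ad}_x$ and the identity $[\mathrm{ad}_x,\mathrm{ad}_w]=\mathrm{ad}_{[x,w]}$ can be used. As a sanity check I would verify the final sign for $p=5$ with trivial $\varepsilon$, comparing with the classical statement in \cite{H01}.
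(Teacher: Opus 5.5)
Your route is sound and genuinely different from the paper's, but the twist in Step 2 is the wrong way round, and this is what produces the difficulty you anticipate. The right-handed color Jacobi identity is $[[a,w],x]=[a,[w,x]]+\varepsilon(\bar w,\bar x)[[a,x],w]$, i.e. $R_xR_w-\varepsilon(\bar w,\bar x)R_wR_x=R_{[w,x]}$, and it does not depend on $\bar a$.

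With your definition $\delta(A)=R_xA-\varepsilon(\bar x,\bar A)AR_x$ you get instead $\delta(R_w)=R_{[w,x]}+\bigl(\varepsilon(\bar w,\bar x)-\varepsilon(\bar x,\bar w)\bigr)R_wR_x$. In general this is not a multiple of $R_{[w,x]}$, since the two twists agree only when $\varepsilon(\bar w,\bar x)=\pm1$. Likewise, your Step 3 expansion produces $\varepsilon(\bar x,\bar z)^k$, and no inversion occurs there to turn it into $\varepsilon(\bar z,\bar x)^k$.

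The fix is to set $\delta(A)=R_xA-\varepsilon(\bar A,\bar x)AR_x$. Then:
\begin{enumerate}
\item $\delta(R_w)=R_{[w,x]}$ exactly, so no twisted $R$ is needed.
\item The maps $A\mapsto R_xA$ and $A\mapsto\varepsilon(\bar A,\bar x)AR_x$ commute, because $\varepsilon(\bar x,\bar x)=1$.
\item Using $\binom{p-1}{k}(-1)^k\equiv 1 \pmod p$, you obtain
\[
\sum_{i=1}^{p}\varepsilon(\bar z,\bar x)^{i-1}R_x^{\,p-i}R_zR_x^{\,i-1}=\delta^{p-1}(R_z)=R_{(z,x,\dots,x)} .
\]
\end{enumerate}
Applied to $y$, the sum equals $[y,(z,x,\dots,x)]=-\varepsilon(\bar y,\bar z+(p-1)\bar x)(z,x,\dots,x,y)$. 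This is exactly the claimed identity, with no residual sign left to check in Step 4.

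The paper goes the other way. It rewrites each right bracket $[a,x]$ as $-\varepsilon(\bar a,\bar x)\ad_x(a)$, so the summand with $j$ copies of $x$ after $z$ becomes, up to $\varepsilon$-factors, $\ad_x^{j}\bigl([\ad_x^{p-1-j}(y),z]\bigr)$. It then expands with the color Leibniz rule for $\ad_x$, which is where $\bar x\in G_+$ enters. Finally it collects the coefficient of $[\ad_x^{p-1-k}(y),\ad_x^{k}(z)]$ as $\sum_{j=k}^{p-1}\binom{j}{k}=\binom{p}{k+1}$, which vanishes mod $p$ except when $k=p-1$.

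So the paper expands the sum into many brackets and kills all but one with the hockey-stick identity. You instead recognize the sum as a single iterated twisted commutator in $\mathrm{End}(L_S(T))$, the color analogue of $\sum_i(\ad x)^i\,\ad z\,(\ad x)^{p-1-i}=\ad\bigl((\ad x)^{p-1}z\bigr)$. Your version, once the twist is corrected, avoids the argument-dependent factors such as $\varepsilon(\bar y,\bar x)$ and $\varepsilon(\bar y+\bar z,\bar x)$ that make the paper's bookkeeping delicate, and it works verbatim in any Lie color algebra. The paper's version has the advantage of reusing the Leibniz proposition already proved in Section 2 and staying inside the Lie color algebra rather than its endomorphism algebra.
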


\begin{proof} We have \\
     $\sum\limits_{i=1}^n\varepsilon(\bar{z},\bar{x})(y,x,\cdots,x,z,x,\cdots,x)= \sum\limits_{j=0}^{n-1}\varepsilon(\bar{z},\bar{x})^{n-j-1}\ad(-\varepsilon(\bar{y}+\bar{z},\bar{x})x)^j ([ad(-\varepsilon(\bar{y},\bar{x})x)^{n-1-j}(y),z]) $.

     By applying the Leibniz rule on $\ad(-\varepsilon(\bar{y}+\bar{z},\bar{x})x) $, we get \begin{align*}
        & \sum\limits_{j=0}^{n-1}\varepsilon(\bar{z},\bar{x})^{n-j-1}\ad(-\varepsilon(\bar{y}+\bar{z},\bar{x})x)^j ([\ad(-\varepsilon(\bar{y},\bar{x})x)^{n-1-j}(y),z]) \\ & = \sum\limits_{j=0}^{n-1}\sum\limits_{k=0}^j \tbinom{j}{k}\varepsilon(\bar{y},\bar{x})^{n-1-k}\varepsilon(\bar{z},\bar{x})^{n-1}[\ad(-x)^{n-1-k}(y),\ad(-x)^k(z)] \\ & = \sum\limits_{k=0}^nc_{k,n}\varepsilon(\bar{z},\bar{x})^{n-k-1}[\ad(-\varepsilon(\bar{y},\bar{x})x)^{n-1-k}(y), \ad(-\varepsilon(\bar{z},\bar{x})x)^k(z)] \\ & = \sum\limits_{k=1}^n \tbinom{n}{k}\varepsilon(\bar{z},\bar{x})^{n-1-k}[(y,x,\cdots,x),(z,x\cdots,x)],
     \end{align*} where $c_{k,n}= \sum\limits_{j=k}^{n-1}\binom{j}{k} $. For a fixed  $k$, an induction shows that $c_{k,n}= \binom{n}{k+1} $. If $n=p$, it is equal to $[y,(z,x,\cdots,x)]= -\varepsilon(\bar{y},\bar{z}+(p-1)\bar{x})(z,x,\cdots,x,y) $.
\end{proof}

\begin{lem}\label{lem4.13}
    Let $T$ be a restricted Lie color triple system. Assume that $Z(T)=0$. Let $D\in \mathcal{H}(\mathrm{Der}(T))$ viewed as an element of $L_D(T)$ and $x\in \mathcal{H}(T_+)$. Then, we have \begin{center}
        $[x^{[p]},D]= (\ad(x))^p(D) $.
    \end{center}
\end{lem}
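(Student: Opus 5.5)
Both sides of the claimed identity are elements of $L_D(T)$ of degree $p\bar{x}+\bar{D}$ that lie in $T$. Indeed, $[x^{[p]},D]=-\varepsilon(p\bar{x},\bar{D})D(x^{[p]})\in T$, and $(\ad x)^p(D)$ alternates between $T$ and $\mathrm{Der}(T)$ and ends in $T$ because $p$ is odd. Since $Z(T)=0$, an element $w\in T$ is zero as soon as $[w,y,z]=0$ for all $y,z\in\mathcal{H}(T)$. So I will prove the identity after bracketing both sides into the triple product: it suffices to show
\[
[[x^{[p]},D],y,z]=[(\ad x)^p(D),y,z]\quad\text{for all } y,z\in\mathcal{H}(T),
\]
or, equivalently (and more conveniently), equality of the corresponding inner derivations $D_{w,y}$ evaluated on $z$.

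For the left-hand side I will apply the derivation rule for $D$ to $[x^{[p]},y,z]$:
\[
D([x^{[p]},y,z])=[D(x^{[p]}),y,z]+\varepsilon(\bar{D},p\bar{x})[x^{[p]},D(y),z]+\varepsilon(\bar{D},p\bar{x}+\bar{y})[x^{[p]},y,D(z)].
\]
This expresses $[D(x^{[p]}),y,z]$ through terms in which $x^{[p]}$ sits in the first slot. Each such term can be rewritten with condition 3 of Definition~\ref{d4.1}: move $x^{[p]}$ to the middle slot via \eqref{lts2}, giving $[\,\cdot\,,x^{[p]},\cdot\,]=(\,\cdot\,,x,\dots,x,\cdot\,)$, or use the Corollary $[a,b,z^{[p]}]=(a,b,z,\dots,z)$. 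This turns the left-hand side into an expression involving only $x$ ($p$ times), $D$, $y$ and $z$, namely $D$ applied to an iterated bracket minus terms where $D$ hits $y$ or $z$.

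For the right-hand side I will compute $(\ad x)^p(D)$ explicitly in $L_D(T)$. Since $[x,D]=-\varepsilon(\bar{x},\bar{D})D(x)$ and $[x,t]=D_{x,t}$ for $t\in T$, the iterate $(\ad x)^p(D)$ is, up to an explicit $\varepsilon$-sign, a sum over the position at which $D$ is applied to $x$ inside a nested bracket of $p$ copies of $x$. Equivalently, I can write $(\ad x)^p(D)=\sum_k\binom{p}{k}(-1)^k(L_x)^{p-k}D(L_x)^{k}$-type terms, using that in $\mathrm{End}(L_D(T))$ we have $\ad x=L_x-R_x$ with commuting left and right multiplications (twisted by $\varepsilon$). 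Because binomial coefficients vanish mod $p$, the expression collapses to $(\ad x)^p$ acting on $D$ as $[(\ad x)^p\text{-part},D]$ plus correction terms.

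The key intermediate step is to match the sum over positions of $D(x)$ with the sum appearing in Lemma~\ref{lem4.12}. That lemma, with $z$ replaced by $D(x)$, converts $\sum_i\varepsilon^{i-1}(y,x,\dots,D(x),\dots,x)$ into a single term $(D(x),x,\dots,x,y)$. This is exactly the shape of $[(\ad x)^{p}(D),y,\cdot]$ after rewriting with Definition~\ref{d4.1}(3). Alternatively I will apply $D$ to $(y,x,\dots,x,z)=[y,x^{[p]},z]$ with the Leibniz rule, and compare with $D([y,x^{[p]},z])$ expanded by the derivation rule: the terms where $D$ hits $y$ or $z$ cancel, leaving $[y,D(x^{[p]}),z]=\sum_i(\text{$D$ on the $i$-th }x)$, which is the desired identity. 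I expect the main obstacle to be the bookkeeping of the $\varepsilon$-factors: the signs from \eqref{lts2}, from $[x,D]=-\varepsilon(\bar{x},\bar{D})D(x)$, and from the $\varepsilon(\bar{z},\bar{x})^{i-1}$ weights in Lemma~\ref{lem4.12} must match. Since $\bar{x}\in G_+$, we have $\varepsilon(\bar{x},\bar{x})=1$, which should make these signs consistent.
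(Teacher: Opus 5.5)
Your closing route (the ``alternatively'' sentence together with Lemma~\ref{lem4.12}) is the paper's proof. You reduce via $Z(T)=0$ to triple products against arbitrary $y,z$. You expand $D$ applied to $[y,x^{[p]},z]=(y,x,\dots,x,z)$ once by the derivation rule and once by the Leibniz rule on the iterated bracket. The terms where $D$ hits $y$ or $z$ cancel by condition~3 of Definition~\ref{d4.1}, and Lemma~\ref{lem4.12} collapses the remaining sum over the position of $D(x)$. The paper isolates $[D(x^{[p]}),y,z]$ in the first slot after writing $[x^{[p]},y,z]=-\varepsilon(\bar{x},\bar{y})^p[y,x^{[p]},z]$; your middle-slot version is equivalent by \eqref{lts2}.

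Two statements in your write-up are wrong, however.

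First, $(\ad x)^p(D)$ is not a sum over positions. The first step already gives $[x,D]=-\varepsilon(\bar{x},\bar{D})D(x)\in T$, and afterwards only copies of $x$ are bracketed on. So $(\ad x)^p(D)=-\varepsilon(\bar{x},\bar{D})^p(D(x),x,\dots,x)$ with $p-1$ copies of $x$, a single term. The sum over positions exists only on the other side, where Leibniz produces it. Lemma~\ref{lem4.12} is precisely what turns $\sum_i\varepsilon(\bar{D},\bar{x})^{i-1}(y,x,\dots,D(x),\dots,x)$ into $[y,(D(x),x,\dots,x)]$.

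Second, the $L_x-R_x$ paragraph is not meaningful as written. $L_D(T)$ is not associative, $(\ad x)^p(D)$ is an element rather than an operator, and in characteristic $p$ there would be no ``correction terms'' anyway. Made precise inside the associative algebra $\mathrm{End}(L_D(T))$, though, it gives a genuinely shorter proof:
\begin{itemize}
\item Since $\ad$ is a homomorphism, $\varepsilon(\bar{x},\bar{x})=1$ and $p\mid\binom{p}{k}$ for $0<k<p$, one gets $\ad_{(\ad x)^p(D)}=(\ad_x)^p\ad_D-\varepsilon(p\bar{x},\bar{D})\ad_D(\ad_x)^p$.
\item Evaluate at $y\in T$ and use $(\ad_x)^p=\ad_{x^{[p]}}$ on $L_S(T)$ (the proposition proved just before the corollary $[x,y,z^{[p]}]=(x,y,z,\dots,z)$). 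The Jacobi identity in $L_D(T)$ then turns the result into $\ad_{[x^{[p]},D]}(y)$.
\item Hence $w=(\ad x)^p(D)-[x^{[p]},D]\in T$ satisfies $D_{w,y}=[w,y]=0$ for all $y\in T$, so $w\in Z(T)=0$.
\end{itemize}
This avoids Lemma~\ref{lem4.12} and most of the $\varepsilon$-bookkeeping, at the price of depending on that proposition.
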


\begin{proof}
   We follow the proof of    \cite[Lemma $3.29 $]{H01}. The left hand side of the equation is $-\varepsilon(\bar{x},\bar{D})^pD(x^{[p]}) $ and the right hand side is $-\varepsilon(\bar{x},\bar{D})^p(D(x),x,\dots,x) $. Then we have to show that $D(x^{[p]})=(D(x),x,\dots,x) $. Since the center is trivial, we just need  to show that for all $y,z \in T$ \begin{center}
       $[D(x^{[p]}),y,z]=(D(x),x,\dots,x,y,z) $.
   \end{center} Applying $D$ to $[x^{[p]},y,z] $ for $y,z\in \mathcal{H}(T) $, we obtain \begin{center}
       $D[x^{[p]},y,z]= [D(x^{[p]}),y,z]+\varepsilon(\bar{D},\bar{x})^p[x^{[p]},D(y),z]+\varepsilon(\bar{D},\bar{x})^p\varepsilon(\bar{D},\bar{y})[x^{[p]},y,D(z)] $.
   \end{center} Thus \begin{align*}
       [D(x^{[p]}),y,z] &=D([x^{[p]},y,z]) -\varepsilon(\bar{D},\bar{x})^p[x^{[p]},D(y),z]-\varepsilon(\bar{D},\bar{x})^p\varepsilon(\bar{D},\bar{y})[x^{[p]},y,D(z)] \\ & = -\varepsilon(\bar{x},\bar{y})^pD([y,x^{[p]},z])+ \varepsilon(\bar{x},\bar{y})^p[D(y),x^{[p]},z]+ \varepsilon(\bar{D},\bar{x})^p\varepsilon(\bar{x},\bar{y})^p\varepsilon(\bar{D},\bar{y})[y,x^{[p]},D(z)],
   \end{align*} \begin{center}
       $D([y,x^{[p]},z])= (D(y),x\cdots,x,z) +\varepsilon(\bar{D},\bar{y})\sum\limits_{i=1}^p\varepsilon(\bar{D},\bar{x})^{i-1}(y,x,\cdots,x,D(x),x,\cdots,x,z)+ \varepsilon(\bar{D},\bar{y})\varepsilon(\bar{D},\bar{x})^p(y,x,\cdots,x,D(z)) $ ,
       \end{center} where in the $i^{th}$ term of the sum, $D(x)$ is at position $i+1 $. Finally, by \hyperref[lem4.12]{Lemma ~\ref*{lem4.12}}, we have \begin{align*}
           [D(x^{[p]}),y,z]  =& -\varepsilon(\bar{x},\bar{y})^p(D(y),x,\cdots,x,z)-\varepsilon(\bar{x},\bar{y})^p\varepsilon(\bar{D},\bar{y})\sum\limits_{i=1}^p\varepsilon(\bar{D},\bar{x})^{i-1}(y,x,\cdots,x,D(x),x,\cdots,x,z) \\ &- \varepsilon(\bar{x},\bar{y})^p\varepsilon(\bar{D},\bar{y})\varepsilon(\bar{D},\bar{x})^p(y,x,\cdots,x,D(z))+ \varepsilon(\bar{x},\bar{y})^p(D(y),x,\cdots,x,z) \\ &+ \varepsilon(\bar{x},\bar{y})^p\varepsilon(\bar{D},\bar{y})\varepsilon(\bar{D},\bar{x})^p(y,x,\cdots,x,D(z)) \\ =&  -\varepsilon(\bar{x},\bar{y})^p\varepsilon(\bar{D},\bar{y})\sum\limits_{i=1}^p\varepsilon(\bar{D},\bar{x})^{i-1}  (y,x,\cdots,x,D(x),x,\cdots,x,z) \\ =& (D(x),x,\cdots,x,y,z),
        \end{align*} for all $y,z\in \mathcal{H}(T) $.
   
\end{proof}

\begin{lem}\label{lem4.14}
    Let $x\in \mathcal{H}(T_+)$, $y\in T$ viewed as elements of $L_D(T) $. Then, we have \begin{center}
        $[x^{[p]},y]= (\ad_x)^p(y) $.
    \end{center}
\end{lem}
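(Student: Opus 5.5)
We need $[x^{[p]},y]=(\ad_x)^p(y)$ in $L_D(T)$, where $x\in T_\gamma$ with $\gamma\in G_+$ and $y\in T$. By linearity in $y$ it is enough to take $y\in\mathcal{H}(T)$. In $L_D(T)$ the bracket of two elements of $T$ is the inner derivation $D_{x,y}$, so both sides lie in $\mathrm{Der}(T)$, in the degree-$(p\bar x+\bar y)$ component. Two derivations in $L_D(T)$ are equal exactly when they agree as endomorphisms of $T$. The plan is therefore to evaluate both sides on an arbitrary homogeneous $z\in T$ and compare.

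For the left-hand side, the definition of the bracket gives $[x^{[p]},y](z)=D_{x^{[p]},y}(z)=[x^{[p]},y,z]$. By (\ref{lts2}) this equals $-\varepsilon(p\bar x,\bar y)[y,x^{[p]},z]$. Condition 3 of Definition~\ref{d4.1} then turns it into $-\varepsilon(p\bar x,\bar y)(y,x,\dots,x,z)$, with $p$ copies of $x$.

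For the right-hand side, I compute $(\ad_x)^p(y)$ step by step inside $L_D(T)$, as in the proof of Theorem~\ref{thm4.10}. The first step gives $[x,y]=D_{x,y}\in\mathrm{Der}(T)$. The next step gives $[x,D_{x,y}]=-\varepsilon(\bar x,2\bar x+\bar y)D_{x,y}(x)=-\varepsilon(\bar x,\bar y)\,[x,y,x]$, using $\varepsilon(\bar x,\bar x)=1$; by (\ref{lts2}) this is $\varepsilon(\bar x,\bar y)(y,x,x)\in T$ up to the sign absorbed. Continuing, the powers alternate between $T$ and $\mathrm{Der}(T)$. Since $p$ is odd, $(\ad_x)^{p-1}(y)$ is a scalar multiple $c\,(y,x,\dots,x)$ in $T$, with $p-1$ copies of $x$. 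Then $(\ad_x)^p(y)=c\,D_{x,(y,x,\dots,x)}$. Evaluated on $z$ this gives $c\,[x,(y,x,\dots,x),z]=-c\,\varepsilon(\bar x,\bar y)(y,x,\dots,x,z)$, where the sign from (\ref{lts2}) uses $\varepsilon(\bar x,\bar x)=1$ and that $(y,x,\dots,x)$ has degree $\bar y+(p-1)\bar x$.

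This is exactly the computation the paper already performs in the proof of the Proposition stating $\ad_x^p=\ad_{x^{[p]}}$ on $L_S(T)$. There it found $(\ad_x)^p(y)=-\varepsilon(p\bar x,\bar y)(y,x,\dots,x,x)$ regarded inside the bracket, that is, $[(\ad_x)^p(y),z]=-\varepsilon(p\bar x,\bar y)(y,x,\dots,x,z)$. Since $L_S(T)$ is a color ideal of $L_D(T)$ with the same bracket on $T\oplus[T,T]$, I will invoke that computation directly. The conclusion is that the two derivations agree on every $z$, hence are equal.

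The main obstacle is the bookkeeping of the bicharacter factors: one must check that the accumulated scalar $c$, together with the final swap, equals $\varepsilon(p\bar x,\bar y)$. I would handle this with a short induction on the number of applications of $\ad_x$, tracking the degree at each step and using $\varepsilon(\bar x,\bar x)=1$ repeatedly. Alternatively, I would simply cite the identical calculation in the earlier Proposition, noting that it takes place entirely inside $L_S(T)\subset L_D(T)$.
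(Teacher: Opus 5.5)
Your argument is correct, but it is not the route the paper takes for this lemma.

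\textbf{The paper's route.} It sets $D=(\ad_x)^p$ and notes, via the color Leibniz rule, that $D$ is a derivation of $L_D(T)$. It then expands $[D(y),z]=D([y,z])-\varepsilon(\bar D,\bar y)[y,D(z)]$ and computes $(\ad_x)^p([y,z])=-\varepsilon(\bar x,\bar y+\bar z)^p(y,z,x,\dots,x)$ and $[y,(\ad_x)^p(z)]$. The two iterated products are rewritten as $[y,z,x^{[p]}]$, by the Corollary $[a,b,x^{[p]}]=(a,b,x,\dots,x)$, and as $[z,x^{[p]},y]$, by Condition 3 of Definition~\ref{d4.1}. Finally the cyclic identity (\ref{lts3}) recombines them into $[x^{[p]},y,z]$.

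\textbf{Your route.} You evaluate both inner derivations directly on $z$, using only (\ref{lts2}) and Condition 3. One intermediate constant is off. After the second step you get $-\varepsilon(\bar x,\bar y)[x,y,x]=\varepsilon(\bar x,\bar y)^2(y,x,x)$, not $\varepsilon(\bar x,\bar y)(y,x,x)$. In general each pair of applications of $\ad_x$ contributes a factor $\varepsilon(\bar x,\bar y)^2$. Hence $c=\varepsilon(\bar x,\bar y)^{p-1}$ and $c\,\varepsilon(\bar x,\bar y)=\varepsilon(p\bar x,\bar y)$, exactly as required, so the induction you propose does close.

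\textbf{Comparison.} You are also right that everything takes place inside the subalgebra $L_S(T)=T\oplus\mathrm{InnDer}(T)\subset L_D(T)$. So the lemma is just the earlier Proposition $\ad_x^p=\ad_{x^{[p]}}$ on $L_S(T)$ applied to $y\in T$, and citing it gives the shortest proof. The paper's route works with the derivation $(\ad_x)^p$ of $L_D(T)$, in the same spirit as Lemma~\ref{lem4.13}. It is more circuitous, though: its key input, the Corollary, is itself deduced from that same Proposition.
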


\begin{proof}
    Let $ D=(\ad_x)^p$. By the Leibniz rule, $D$ is a derivation of $L_D(T) $. So for all $y,z \in \mathcal{H}(T)$, we have \begin{align*}
        [D(y),z] &=D([y,z])-\varepsilon(\bar{D},\bar{y})[y,D(z)] \\ & = -\varepsilon(\bar{x},\bar{y}+\bar{z})^p(y,z,x\dots,x)-\varepsilon(\bar{y},\bar{z})\varepsilon(\bar{x},\bar{z})^p(z,x,\dots,x,y).
    \end{align*} The left hand side is $[(ad_x)^p(y),z] $. The right hand side is \begin{align*}  - & \varepsilon(\bar{x},\bar{y}+\bar{z})^p(y,z,x,\dots,x)-\varepsilon(\bar{y},\bar{z})\varepsilon(\bar{x},\bar{z})^p(z,x,\dots,x,y) \\ & =  -\varepsilon(\bar{x},\bar{y}+\bar{z})^p[y,z,x^{[p]}]-\varepsilon(\bar{y},\bar{z})\varepsilon(\bar{x},\bar{z})^p[z,x^{[p]},y] =[x^{[p]},y,z] .\end{align*} So, $[(\ad_x)^p(y),z]= [[x^{[p]},y],z] $ and since $p$ is odd, $(\ad_x)^p(y) $ and $[x^{[p]},y] $ are elements of $\mathrm{Der}(T)$. Then $(\ad_x)^p(y)= [x^{[p]},y] $.
\end{proof}
Now, we prove the following theorem, with a proof similar to (\cite[Theorem $3.25 $]{H01}).

\begin{thm}\label{thm4.15}
    Let $T$ be a restricted Lie color triple system with $Z(T)=0$. Then $L_D(T)$ is a restricted Lie color algebra, with a restricted structure extending that of $T$.
\end{thm}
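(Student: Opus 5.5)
The plan is to apply the color Jacobson Theorem (Theorem~\ref{thm3.5}) to $L_D(T)=T\oplus\mathrm{Der}(T)$. We choose a homogeneous basis adapted to the decomposition and prescribe the $p$-th powers on it. For each $\gamma\in G_+$ the component $L_D(T)_\gamma=T_\gamma\oplus\mathrm{Der}(T)_\gamma$ has a basis consisting of a basis $(e_{j,\gamma})$ of $T_\gamma$ together with a basis $(D_{k,\gamma})$ of $\mathrm{Der}(T)_\gamma$. As candidate images we take $e_{j,\gamma}^{[p]}$, the given $p$-map of $T$, and $D_{k,\gamma}^p\in\mathrm{Der}(T)_{p\gamma}$, which lies there because $\mathrm{Der}(T)$ is restricted with $D^{[p]}=D^p$. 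It then suffices to check $(\ad e_{j,\gamma})^p=\ad(e_{j,\gamma}^{[p]})$ and $(\ad D_{k,\gamma})^p=\ad(D_{k,\gamma}^p)$ as operators on all of $L_D(T)$, and it is enough to test these on $T$ and on $\mathcal{H}(\mathrm{Der}(T))$ separately.

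For $x\in\mathcal{H}(T_+)$, the identity on elements $y\in T$ is Lemma~\ref{lem4.14}, and on elements $D\in\mathcal{H}(\mathrm{Der}(T))$ it is Lemma~\ref{lem4.13}, which is where $Z(T)=0$ enters. For $D\in\mathcal{H}(\mathrm{Der}(T)_+)$ and $y\in T$, we have $[D,y]=D(y)$, so $(\ad D)^p(y)=D^p(y)=[D^p,y]$. For $D'\in\mathrm{Der}(T)$, the bracket on $\mathrm{Der}(T)$ is the color commutator inside the associative color algebra $\mathrm{End}(T)$, where $(\ad D)^p=\ad(D^p)$ holds because $D$ has degree in $G_+$ and the characteristic is $p$. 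One has to take care to use the correct color commutator $DD'-\varepsilon(\bar D,\bar D')D'D$, which is what the definition of $L_D(T)$ intends. Theorem~\ref{thm3.5} then produces a unique $p$-map on $L_D(T)_+$ agreeing with these values on the basis.

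Next we must show that this $p$-map extends that of $T$, i.e.\ that $x^{[p]_{L_D}}=x^{[p]_T}$ for every $x\in T_\gamma$, not just for basis vectors. By Proposition~\ref{p3.4}, applied with $H=L_D(T)$, the difference $f(x)=x^{[p]_T}-x^{[p]_{L_D}}$ satisfies $\ad f(x)=0$ on $L_D(T)$, since both sides induce $(\ad x)^p$ by Lemmas~\ref{lem4.13} and \ref{lem4.14}. It is also additive in $x$, since both maps obey the same $s_i$ formula; the triple-system $s_i(x,y)$ coincides with the Lie ones, as noted before Definition~\ref{d4.1}. So $f$ is $p$-semilinear and vanishes on the basis, hence $f=0$. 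The alternative of observing that $Z(L_D(T))=0$ directly (an element $t+E$ central forces $E=0$ acting on $T$ and $t\in Z(T)=0$) also gives uniqueness and hence agreement.

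The main obstacle I expect is the verification on the Der-part: making sure that $(\ad D)^p=\ad(D^p)$ holds in $L_D(T)$ with the mixed brackets $[x,D]=-\varepsilon(\bar x,\bar D)D(x)$, with signs consistent for degrees in $G_+$. I would handle this by realizing $L_D(T)$ inside the associative color algebra $\mathrm{End}(T\oplus\mathrm{Der}(T))$ via the adjoint action, so that $\ad$ is a color Lie map and the associative identity $(\ad a)^p=\ad(a^p)$ can be invoked.
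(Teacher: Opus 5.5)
Your proposal is correct and follows the paper's proof: you apply Jacobson's Theorem~\ref{thm3.5} to a basis of $L_D(T)_\gamma$ formed from bases of $T_\gamma$ and $\mathrm{Der}(T)_\gamma$, use Lemmas~\ref{lem4.13} and \ref{lem4.14} for the $T$-part, and use the associative identity $(\ad D)^p=\ad(D^p)$ in $\mathrm{End}(T)$ for the derivation part. Your extra $p$-semilinearity argument, showing the resulting $p$-map agrees with that of $T$ on all of $T_\gamma$ and not only on basis vectors, fills in a point the paper leaves implicit; the direct check in $\mathrm{End}(T)$ already settles the derivation part, so the detour through $\mathrm{End}(L_D(T))$ you mention (which by itself would not produce $D^p$ as an element of $L_D(T)$) is not needed.
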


\begin{proof}
    For $D\in \mathcal{H}(\mathrm{Der}(T)_+) $, we define $D^{[p]}=D^p $ and for $x\in \mathcal{H}(T_+)\subset \mathcal{H}(L_D(T)_+) $, we define $x^{[p]} $, where $(\cdot)^{[p]} $ is the $p$-map of $T$. We will use the color case Jacobson's Theorem. Let $\gamma\in G_+ $. We just need to  show that $[x^{[p]},y]=(\ad_x)^p(y) $ for all $y\in L_D(T) $ and for all $x\in B$, where $B$ is a basis of $L_D(T)_\gamma$. For $B$, we choose a basis which is the concatenation of a basis $B_T$ of $T_\gamma$ and a basis $B_D$ of $\mathrm{Der}(T)_\gamma $. Let $y= D'+b\in L_D(T)$, $D'\in \mathrm{Der}(T)$, $b\in T$. Assume that $ x=D\in B_D $, then $[D,b]=D(b) $, so $(\ad_D)^p(b)=D^p(b) $. We have \begin{align*}
        [D^p,D'+b] &= [D^p,D'] +D^p(b) \\ & = (\ad_D)^p(D')+ (\ad_D)^p(b)\\ & = (\ad_D)^p(D'+b).
    \end{align*} If $x=a\in B_T $, according to \hyperref[lem4.13]{Lemma ~\ref*{lem4.13}} and \hyperref[lem4.14]{Lemma~\ref*{lem4.14} }, we have \begin{align*} [a^{[p]},D'+b] &= (\ad_a)^p(D')+(\ad_a)^p(b)\\ &= (\ad_a)^p(D'+b) . \end{align*} So $L_D(T) $ has a restricted structure.
\end{proof}

We aim to  prove an analog of  Jacobson's Theorem for Lie color triple systems. It is shown in \cite{HP02} that the Jacobson's Theorem is true for Lie triple systems. Here we will prove that it is true for Lie color triple systems, using a similar proof as  \cite[Theorem $2.3 $]{SF88}.

\begin{thm}\label{thm4.16}
    Let $T$ be a Lie color triple system over a field $\mathbf{K}$ of characteristic $p>3 $. Suppose that for all $\gamma\in G_+ $ there exists a basis $(u_{i,\gamma})_{i\in J}$  of $T_\gamma$ with the property that for every $i$ there exists a $v_{i,\gamma}\in T_{p\gamma} $ such that \begin{center}
        $[a,v_{i,\gamma},c]= (a,u_{i,\gamma},\dots,u_{i,\gamma},c) $ for all $a,c\in T.$
    \end{center} Then, there is a unique restricted structure on $T$ such that $u_{i,\gamma}^{[p]}=v_{i,\gamma} $ for all $i$.
    
\end{thm}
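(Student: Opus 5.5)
We reduce to the Lie color algebra case, Theorem~\ref{thm3.5}, via the universal embedding, and then recover the triple system structure. The natural ambient object is $U(L_U(T))$: $T$ embeds in $L_U(T)$, which embeds in its universal enveloping algebra $U=U(L_U(T))$ by the color PBW-Theorem, since the characteristic is greater than $3$. In the associative color algebra $U$ we have $x\mapsto x^p$, and for $x\in T_\gamma$, $\gamma\in G_+$, the identity $(\ad_x)^p=\ad_{x^p}$ holds on $U$. Since $p$ is odd, for $a,c\in\mathcal{H}(T)$ the same sign bookkeeping as in the proof of Theorem~\ref{thm4.9} gives
\[
[[a,x^p],c]=(a,x,\dots,x,c).
\]
The hypothesis then says that $w_i:=u_{i,\gamma}^p-v_{i,\gamma}\in U_{p\gamma}$ satisfies $[[a,w_i],c]=0$ for all $a,c\in T$.

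\textbf{Step 1: define the candidate map.} Following the proof of Theorem~\ref{thm3.5}, set
\[
f_\gamma\Bigl(\sum_i\alpha_iu_{i,\gamma}\Bigr):=\sum_i\alpha_i^p\bigl(v_{i,\gamma}-u_{i,\gamma}^p\bigr),
\]
which is $p$-semilinear with values in the set $C$ of elements $w$ of $U$ with $[[T,w],T]=0$. Define $x^{[p]}:=x^p+f_\gamma(x)$ and let
\[
V=\{x\in T_\gamma : x^p+f_\gamma(x)\in T_{p\gamma}\}.
\]
In $U$ we have $(\alpha x+y)^p=\alpha^px^p+y^p+\sum s_i(\alpha x,y)$, where the $s_i$ are computed by $\ad$ in $U$. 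Since $p-1$ is even, $(\ad(Zx+y))^{p-1}(x)$ coincides with the triple expression $(x,Zx+y,\dots,Zx+y)\in T$. So the $s_i$ lie in $T$, and they agree with the $s_i$ in Definition~\ref{d4.1}. Hence $V$ is a subspace. It contains every $u_{i,\gamma}$, so $V=T_\gamma$.

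\textbf{Step 2: check the axioms.} Conditions (1) and (2) of Definition~\ref{d4.1} follow from the computation above. For condition (3), compute
\[
[a,x^{[p]},c]=[[a,x^p],c]+[[a,f_\gamma(x)],c]=(a,x,\dots,x,c),
\]
using $f_\gamma(x)\in C$. This gives existence.

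\textbf{Step 3: uniqueness.} If two $p$-maps agree on the basis, their difference $g(x)=x^{[p]_1}-x^{[p]_2}$ is $p$-semilinear by condition (2), since the $s_i$ cancel, so $g=0$.

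The main obstacle is Step 1's ambient setup. We must make sure that the associative $p$-th power and the $s_i$ computed in $U$ land back in $T$, with the correct color signs, and that triple brackets in $T$ agree with the iterated brackets $[[\cdot,\cdot],\cdot]$ in $U$. This requires injectivity of $T\to L_U(T)\to U$, which is exactly where characteristic $>3$ enters. If the sign bookkeeping there fails, the fallback is to work in $L_S(T)$ and $\mathrm{Der}$ directly. In that case $C$ is replaced by elements $w$ with $D_{a,w}=0$, but the vector-space argument for $V$ is the same.
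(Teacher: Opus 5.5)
Your proposal is correct and follows essentially the paper's argument: embed via PBW into an enveloping algebra, correct the associative $p$-th power by the $p$-semilinear map $f_\gamma$ with values in a centralizer, and show $V=T_\gamma$. The differences are cosmetic. You work in $U(L_U(T))$ and check $[[T,u_{i,\gamma}^p-v_{i,\gamma}],T]=0$ directly, whereas the paper works in $U(L_S(T))$, first derives $\ad_{v_{i,\gamma}}=\ad_{u_{i,\gamma}}^p$ on $L_S(T)$, and then invokes Proposition~\ref{p4.17}; you also supply the uniqueness argument that the paper leaves implicit.
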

%\begin{proof}Similar proof that for the Lie color algebra, using the standard embedding of a Lie color triple system.
%\end{proof}
For a Lie color triple system $T$ and a subset $S\subset T$, we define the centralizer of $S$ in $T$ as \begin{center}
    $C_T(S)= \left\{x\in T\mid [x,a,b]=0 , \forall a,b\in S \right\} .$
\end{center}

\begin{prop}\label{p4.17}
     Let $S$ be a color subsystem of a restricted Lie color triple system $T$ and $(\cdot)^{[p]_1}:S_\gamma\to S_{p\gamma} $ a map for all $\gamma\in G_+ $. Then $(\cdot)^{[p]_1} $ is a $p$-map if and only if there exists a $p$-semi-linear map of degree $p$ denoted by $f_\gamma:S_\gamma\to C_T(S)$ such that $(\cdot)^{[p]_1}=(\cdot)^{[p]}+f $ for all $\gamma\in G_+ $.
\end{prop}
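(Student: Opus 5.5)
We will mimic the proof of Proposition~\ref{p3.4}, replacing the Lie color algebra identities by the axioms of Definition~\ref{d4.1}. We read $S$ as a graded subspace of $T$ closed under the ternary bracket. Its $p$-map $(\cdot)^{[p]}$, inherited from $T$, is the restriction of the $p$-map of $T$ to $S$; implicitly we assume $S_\gamma^{[p]}\subset S_{p\gamma}$, or else we regard $(\cdot)^{[p]}$ as a map $S_\gamma\to T_{p\gamma}$. In that case condition 2 of Definition~\ref{d4.1} for $(\cdot)^{[p]}$ holds in $T$, with $s_i(x,y)\in S$ computed from the bracket of $S$. Condition 3 for $(\cdot)^{[p]_1}$ is understood with $x,z$ ranging over $S$. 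Throughout we fix $\gamma\in G_+$.

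\textbf{Direction ($\Rightarrow$).} Assume $(\cdot)^{[p]_1}$ is a $p$-map on $S$ and set $f_\gamma(x):=x^{[p]_1}-x^{[p]}$ for $x\in S_\gamma$, so that $(\cdot)^{[p]_1}=(\cdot)^{[p]}+f_\gamma$.
\begin{enumerate}
\item \emph{Centralizer.} For $a,c\in\mathcal H(S)$, condition 3 gives $[a,x^{[p]_1},c]=(a,x,\dots,x,c)=[a,x^{[p]},c]$. Hence $[a,f_\gamma(x),c]=0$.
\item \emph{Moving $f_\gamma(x)$ to the first slot.} Apply the skew-symmetry (\ref{lts2}) to get $[f_\gamma(x),a,c]=0$ for all $a,c\in S$. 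This is exactly $f_\gamma(x)\in C_T(S)$.
\item \emph{Degree.} Both $p$-maps send $S_\gamma$ into degree $p\gamma$, so $f_\gamma$ has degree $p$ in the sense that $f_\gamma(S_\gamma)\subset T_{p\gamma}$.
\item \emph{$p$-semi-linearity.} Subtract condition 2 for $(\cdot)^{[p]}$ from condition 2 for $(\cdot)^{[p]_1}$. The $s_i(x,y)$ depend only on the bracket, so they are the same in both and cancel, leaving $f_\gamma(x+y)=f_\gamma(x)+f_\gamma(y)$. Condition 1 likewise gives $f_\gamma(\alpha x)=\alpha^pf_\gamma(x)$.
\end{enumerate}

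\textbf{Direction ($\Leftarrow$).} Suppose $f_\gamma$ is $p$-semi-linear with values in $C_T(S)$ and define $(\cdot)^{[p]_1}=(\cdot)^{[p]}+f_\gamma$.
\begin{enumerate}
\item \emph{Condition 1} follows from homogeneity of both summands.
\item \emph{Condition 2} follows from the same computation as in Proposition~\ref{p3.4}: the additivity of $f_\gamma$ is added to the sum formula for $(\cdot)^{[p]}$.
\item \emph{Condition 3.} We need $[a,f_\gamma(y),c]=0$ for $a,c\in S$. The assumption $f_\gamma(y)\in C_T(S)$ gives $[f_\gamma(y),a,c]=0$, and (\ref{lts2}) converts this into the required statement. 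Then $[a,y^{[p]_1},c]=[a,y^{[p]},c]=(a,y,\dots,y,c)$.
\end{enumerate}

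\textbf{Main obstacle.} The only delicate step is passing between first-slot and middle-slot vanishing. The definition of $C_T(S)$ puts the element in the first slot, while condition 3 puts $y^{[p]}$ in the middle. Skew-symmetry (\ref{lts2}) only swaps the first two arguments, so this conversion is immediate, and the bicharacter factor is nonzero. The identity (\ref{lts3}) is not needed for this step.

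A further subtlety is that for $(\cdot)^{[p]_1}$ to map into $S_{p\gamma}$ one needs $x^{[p]}+f_\gamma(x)\in S$. This is part of the hypothesis that $(\cdot)^{[p]_1}:S_\gamma\to S_{p\gamma}$ is given. In the intended application to Theorem~\ref{thm4.16} it will be verified separately, as was done for the space $V$ in the proof of Theorem~\ref{thm3.5}.
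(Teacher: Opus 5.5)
Your proof is correct and follows the paper's argument. The paper's own proof just says ``same proof as in the color algebra case'' and writes out only the additivity computation for the converse; you additionally make explicit the skew-symmetry step that turns middle-slot vanishing $[a,f_\gamma(x),c]=0$ into $f_\gamma(x)\in C_T(S)$, and your choice $f_\gamma(x)=x^{[p]_1}-x^{[p]}$ is the sign that actually matches $(\cdot)^{[p]_1}=(\cdot)^{[p]}+f$, whereas the paper writes $x^{[p]}-x^{[p]_1}$.
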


\begin{proof}
    The same proof as in the color algebra case. Assume that $(\cdot)^{[p]_1} $ is a $p$-map and  let $\gamma\in G_+ $. Then for all $x\in L_\gamma$, we have $x^{[p]}-x^{[p]_1}\in C_T(S) $. Then  verify that $f(x):= x^{[p]}-x^{[p]_1} $ is a $p$-semi-linear map. Conversely, assume that there exists a $p$-semi-linear map $f: S_\gamma\to C_T(S)$ such that $(\cdot)^{[p]_1}= (\cdot)^{[p]}+ f $. Then, we have \begin{align*}
        (x+y)^{[p]_1} &= (x+y)^{[p]}+ f(x+y) \\ &= x^{[p]}+y^{[p]} + \sum\limits_{i=1}^p s_i(x,y) + f(x) +f(y) \\ & = x^{[p]_1}+y^{[p]_1} + \sum\limits_{i=1}^ps_i(x,y). 
    \end{align*} The other conditions of Definition $4.1$ are straightforward. 
\end{proof}

\begin{proof}[Proof of the theorem]
    The Lie color triple system $T$ is embedded in its standard embedding $L_S(T)$. From the PBW-Theorem, $L_S(T) $ is embedded in its universal algebra $U(L_S(T)) $. Let $\gamma\in G_+ $. For all $a\in \mathcal{H}(T),$  $ c\in T $, we have: \begin{align*}
        -\varepsilon(\bar{a},\bar{v_{i,\gamma}})[v_{i,\gamma},a,c]= [a,v_{i,\gamma},c]= & (a,u_{i,\gamma},\cdots,u_{i,\gamma},c) \\ =& [[\cdots [a,u_{i,\gamma}],u_{i,\gamma}],\cdots ,u_{i,\gamma}],c ] \\ =& (-1)^p\varepsilon(\bar{a},\bar{u_{i,\gamma}}) \varepsilon(\bar{a}+\bar{u_{i,\gamma}},\bar{u_{i,\gamma}})\cdots\varepsilon(\bar{a}+(p-1)\bar{u_{i,\gamma}})[\ad_{u_{i,\gamma}}^p(a),c] \\ =&  -\varepsilon(\bar{a},\bar{u_{i,\gamma}})^p[\ad_{u_{i,\gamma}}^p(a),c] .\end{align*} Thus it follows that $\ad_{v_{i,\gamma}}(a)= \ad_{u_{i,\gamma}}^p(a)$ for all $a\in T$, by definition of $L_S(T)$ and then we have $ \ad_{v_{i,\gamma}}= \ad_{u_{i,\gamma}}^p$. In $U(L_S(T)) $, we have $ \ad_{u_{i,\gamma}}^p=\ad_{u_{i,\gamma}^p} $. Then it follows that $v_{i,\gamma} - u_{i,\gamma}^p  \in C_{U(L_S(T))}(L_S(T)) $. Since the  Lie color triple system $U(L_S(T)) $ is restricted, $T$ is a subsystem of $U(L_S(T)) $ and $ C_{U(L_S(T))}(L_S(T))\subset C_{U(L_S(T))}(T) $. We define a map $f_\gamma: T_\gamma\to C_{U(L_S(T))}(T) $, \begin{center}
        $f_\gamma(\sum\alpha_iu_{i,\gamma}):= \sum\alpha^p(v_{i,\gamma}-u_{i,\gamma}^p), \alpha_i\in \mathbf{K} $. \end{center} Denote by \begin{center}
        $ V = \left\{x\in T_\gamma , x^p+f_\gamma(x) \in T_{p\gamma}\right\} $. \end{center} Since $(\alpha x+y)^p+f_\gamma(\alpha x+y) = \alpha^px^p+y^p+\sum\limits_{i=1}^ps_i(\alpha x,y) + \alpha^pf_\gamma(x)+ f_\gamma(y) \in T\cap U(L_S(T))_{p\gamma}= T_{p\gamma} $, $V$ is a vector space. The elements $(u_{i,\gamma})_{j\in J}$ lie in $V$ and so $V=T_\gamma$. Then, with  \hyperref[p4.17]{Proposition ~\ref*{p4.17}},  we have $ (\cdot)^{[p]_1}:= x\mapsto x^p+f_\gamma(x) $ is a $p$-map on $T$ and it follows that $T$ is restricted.
\end{proof}

\section{Representations}\label{s5}

In this section, we define  representations of  Lie color triple systems and  restricted representations of  restricted Lie color algebras and  restricted Lie color triple systems. We  show, using Jacobson's Theorem, that  the semi-direct product of a restricted Lie color algebra and its restricted representation has a  restricted Lie color triple algebra structure, as well as   a restricted Lie color triple system and its restricted representation is equipped with  a restricted Lie color triple system structure. 

\subsection{Representations of Lie color algebras and Lie color triple systems}

\begin{defi}[\cite{F01}]
    Let $L$ be a Lie color algebra. A representation of $L$ is a morphism of Lie algebra $\varphi:L\to \mathrm{End}(V) $, where $V$ is a $G$-graded vector space and $\mathrm{End}(V)$ is a Lie color algebra with the bracket $[f,g]=fg-\varepsilon(\bar{f},\bar{g})gf $ for $f,g$ two homogeneous elements of $\mathrm{End}(V) $. 
\end{defi}

%In "1-parameter formal deformations and abelian extensions of Lie color triple systems" we have (it seems to be inspired by the paper of Wolfgang Bertram and Didry Manon)
We consider a definition of a representation of a Lie color triple system with respect to an endomorphism given in \cite[Definition $2.4 $]{LM22}.
\begin{defi}\label{d5.2}
    Let $T$ be a Lie color triple system, $V$ be a $G$-graded $\mathbf{K}$-vector space and $\theta:T\times T\to \mathrm{End}(V) $ be a bilinear map. A representation of $T$ is a pair $(V,\theta)$ satisfying, for all $x,y,z,t\in \mathcal{H}(T)$, 
           \begin{align}
          & \theta(x,y)(V_\alpha)\subset V_{\alpha+\bar{x}+\bar{y}} ,\label{rep1}\\
           % \begin{multline*}
            &   \varepsilon(\bar{x}+\bar{y},\bar{z}+\bar{t})\theta(z,t)\theta(x,y)-\varepsilon(\bar{x},\bar{y})\varepsilon(\bar{x}+\bar{z},\bar{t})\theta(y,t)\theta(x,z)-\theta(x,[y,z,t])\nonumber\\& \quad + \varepsilon(\bar{x},\bar{y}+\bar{z})D(y,z)\theta(x,t)=0, %\end{multline*}
               \label{rep2} \\ &
          \varepsilon(\bar{x}+\bar{y},\bar{z}+\bar{t})\theta(z,t)D(x,y)-D(x,y)\theta(z,t)+\theta([x,y,z],t)+ \varepsilon(\bar{x}+\bar{y},\bar{z})\theta(z,[x,y,t])=0 ,\label{rep3}
          \end{align} where $D(x,y)=\varepsilon(\bar{x},\bar{y})\theta(y,x)-\theta(x,y) $.
\end{defi}

\begin{ex}
     $(\theta,T)$ with $\theta(x,y)(z)=\varepsilon(\bar{x}+\bar{y},\bar{z})[z,x,y] $ is a representation, called the adjoint representation.
\end{ex}

\begin{ex}
    We provide here all the $1$ and $2$ dimensional representations   of the Lie color triple system $L=L_{(1,1,0)}\oplus L_{(1,0,1)} \oplus L_{(0,1,1)} $ defined in Example \ref{Example1}. 
    
    Let $V= \mathbf{K}v$ be a 1-dimensional $\Gamma $-graded vector space. Let $\theta:T\times T\to \mathrm{End}(V) $ be a bilinear map. No matter what the degree of $v$, it follows from  Condition \eqref{rep1}, that for $i\neq j \in \left\{1,2,3 \right\} $, $\theta(e_i,e_i)(v)= \alpha_iv  $  and $\theta(e_i,e_j)(v)=0 $. Then, by taking $x=e_3,y=e_1,z=e_3,t=e_1 $ in  Condition \eqref{rep3}, we obtain that $\alpha_{1}=\alpha_3 $. Then, with the quadruplet $x=e_3,y=e_2,z=e_2,t=e_3 $ we have that $\alpha_{3}=\alpha_2 $. By taking $x=e_1,y=e_2,z=e_1$ and $t=e_2$ we obtain that $\alpha^2-\alpha=0 $. It follows that Conditions \eqref{rep2} and \eqref{rep3} are automatically verified. Thus, for $\alpha=0,1$ $\in \mathbf{K}$, the bilinear map $\theta:T\times T\to \mathrm{End}(V) $ defined with respect to  the basis by \begin{center}
          $\theta(e_i,e_i)(v)= \alpha v  $   and $\theta(e_i,e_j)(v)=0 $  for all $i\neq j \in \left\{1,2,3 \right\}$
    \end{center} is a representation of $L$. So there is only one non-trivial one dimensional representation of $L$.
    
    Let $V=\mathbf{K}v_1 \oplus \mathbf{K}v_2 $ be a $2$-dimensional $\Gamma $-graded vector space. Assume that $v_1,v_2$ are not of same degree. We have to deal with cases where $(\bar{v_1},\bar{v_2})\in \left\{(0,1),(0,2),(0,3),(1,2),(1,3),(2,3) \right\} $. If $(\bar{v_1},\bar{v_2})=(1,2) $, then Condition \eqref{rep2} gives that \begin{center}
        $\theta(e_i,e_i)(v_j)= \alpha_{i}^jv_j $, $\theta(e_3,e_i)=0 $, $\theta(e_i,e_3)=0 $, 

        $\theta(e_1,e_2)(v_1)= \alpha_{1,2}^1v_2 $, $\theta(e_1,e_2)(v_2)= \alpha_{1,2}^2v_1 $, 

        $ \theta(e_2,e_1)(v_1)= \alpha_{2,1}^1v_2 $, $\theta(e_2,e_1)(v_2)= \alpha_{2,1}^2v_1 $. 
    \end{center} With  Condition \eqref{rep3}, we obtain by taking $x=e_3,y=e_1 ,z=e_2 ,t=_3 $ that $\alpha_{2,1}^1=\alpha_{2,1}^2=0 $. By taking $x=e_3,y=e_1,z=e_3,t=e_1 $, we obtain that $\alpha_{1}^1=\alpha_3^1 $ and $\alpha_1^2=\alpha_3^2 $. Then with the quadruplet $x=e_3,y=e_2,z=e_1, t=e_3 $, we have that $\alpha_{1,2}^1=\alpha_{1,2}^2= 0 $ and with $x=e_3,y=e_2,z=e_2,t=e_3 $, we have that $\alpha_{3}^1=\alpha_2^1 $ and $\alpha_3^2=\alpha_2^2 $. By taking $x=e_1,y=e_2,z=e_1$ and $t=e_2$, we obtain that $\alpha^2-\alpha=0 $ and $\beta^2-\beta=0 $. So if $(\theta,V)$ is a representation, then $\theta(e_i,e_j)=0 $ if $i\neq j$ and there exist two scalars $\alpha=0,1$ and $\beta=0,1 $ such that $\theta(e_i,e_i)(v_1)=\alpha v_1 $ and $\theta(e_i,e_i)(v_2)=\beta v_2 $. The same result is obtained when considering another case of pair $(v_1,v_2)$ of different degrees.
    
    If $V= \mathbf{K}v\oplus \mathbf{K}w$, where  $v,w$ are generators of the same degree $i$, then $\theta(e_i,e_i)(v)= \alpha_iv+\beta_iw $, $\theta(e_i,e_i)(w)= \gamma_iv+ \delta_iw $ and $\theta(e_i,e_j)=0 $ for $i\neq j$. Using  Condition \eqref{rep3}, we have that $\theta(e_i,e_i)=\theta(e_j,e_j) $. So there exists four scalar, $\alpha,\beta,\gamma,\delta $ such that $\theta(e_i,e_i)(v) = \alpha v+\beta w $ and $\theta(e_i,e_i)(w)= \gamma v+ \delta w $. By taking $x=e_1,y=e_2,z=e_1,t=e_2 $ in Condition \eqref{rep3}, we get $\theta(e_1,e_1)^2-\theta(e_1,e_1)=0. $ Thus we have this system of equations \begin{center}
        $\left\{\begin{matrix}
\alpha^2+\beta\gamma=\alpha \\ \alpha\beta+\beta\delta=\beta
 \\\alpha\gamma+\delta\gamma=\gamma
 \\ \gamma\beta+\delta^2=\delta
\end{matrix}\right.$
   \end{center} If we denote by \begin{center}
        $A=\begin{pmatrix}
\alpha  & \gamma  \\
\beta  & \delta   \\
\end{pmatrix} $
    \end{center} the matrix of $\theta(e_i,e_i) $, then $A$ is an idempotent of $\mathcal{M}_2(\mathbf{K}). $ The matrix idempotent in $\mathcal{M}_2(\mathbf{K}) $ are $0, \mathrm{I}_2$ and the projectors of rank $1$, which correspond to the case $\alpha+\delta=1 $, $\beta\gamma= \alpha(1-\alpha). $ Thus, the non-trivial representations of $L$ are the bilinear maps $\theta:T\times T\to \mathrm{End}(T) $ such that $\theta(e_i,e_j)=0 $ for $i\neq j$ and $\theta(e_i,e_i)=\mathrm{Id}_V $ or $\theta^2
    (e_i,e_i)=\theta(e_i,e_i) $ and $\theta(e_i,e_i) $ is a projector of rank $1$ of $V$. 
\end{ex}

\begin{prop}
    Let $L$ be a Lie color algebra and $\varphi: L\to \mathrm{End}(V)$ be a representation of $L$. Then, $\varphi$ induces a representation of $L_{\mathrm{trip}} $ defined by $\theta(x,y)=\varepsilon(\bar{x},\bar{y})\varphi(y)\varphi(x) $.
\end{prop}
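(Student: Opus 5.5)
We verify directly that $\theta(x,y)=\varepsilon(\bar{x},\bar{y})\varphi(y)\varphi(x)$ satisfies the three conditions of Definition~\ref{d5.2}, using only the morphism identity
$$\varphi([x,y])=\varphi(x)\varphi(y)-\varepsilon(\bar{x},\bar{y})\varphi(y)\varphi(x)$$
and the fact that $L_{\mathrm{trip}}$ has bracket $[x,y,z]=[[x,y],z]$.

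\textbf{Step 1 (degree condition \eqref{rep1}).} This is immediate. Since $\varphi(x)$ has degree $\bar{x}$, the operator $\theta(x,y)$ maps $V_\alpha$ into $V_{\alpha+\bar{x}+\bar{y}}$.

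\textbf{Step 2 (simplify $D(x,y)$).} We compute
$$D(x,y)=\varepsilon(\bar{x},\bar{y})\theta(y,x)-\theta(x,y)=\varphi(x)\varphi(y)-\varepsilon(\bar{x},\bar{y})\varphi(y)\varphi(x)=\varphi([x,y]),$$
using $\varepsilon(\bar{x},\bar{y})\varepsilon(\bar{y},\bar{x})=1$. This simplification is what drives everything else. It gives
$$\varphi([x,y,z])=[\varphi([x,y]),\varphi(z)]=D(x,y)\varphi(z)-\varepsilon(\bar{x}+\bar{y},\bar{z})\varphi(z)D(x,y).$$

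\textbf{Step 3 (condition \eqref{rep3}).} Substitute $\theta(z,t)=\varepsilon(\bar{z},\bar{t})\varphi(t)\varphi(z)$ and $D(x,y)=\varphi([x,y])$. Expand $\theta([x,y,z],t)$ and $\theta(z,[x,y,t])$ using the formula from Step 2 for $\varphi$ of a triple bracket. After expansion every term has the form $\varphi(\cdot)\varphi(\cdot)\varphi(\cdot)$, with $\varphi([x,y])$ in one of three positions. The terms cancel in pairs once the bicharacter factors are collected with the rules $\varepsilon(a+b,c)=\varepsilon(a,c)\varepsilon(b,c)$ and $\varepsilon(a,b)\varepsilon(b,a)=1$. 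This is just the statement that $\mathrm{ad}\,\varphi([x,y])$ acts as a color derivation on the product $\varphi(t)\varphi(z)$.

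\textbf{Step 4 (condition \eqref{rep2}).} This is the main computational obstacle. We substitute in the same way. The term $\theta(x,[y,z,t])$ equals
$$\varepsilon(\bar{x},\bar{y}+\bar{z}+\bar{t})\,\varphi([[y,z],t])\,\varphi(x),$$
and we expand $\varphi([[y,z],t])$ into the four words $\varphi(y)\varphi(z)\varphi(t)$, $\varphi(z)\varphi(y)\varphi(t)$, $\varphi(t)\varphi(y)\varphi(z)$, $\varphi(t)\varphi(z)\varphi(y)$, each followed by $\varphi(x)$. The term $D(y,z)\theta(x,t)=\varphi([y,z])\,\varepsilon(\bar{x},\bar{t})\varphi(t)\varphi(x)$ contributes two words. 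The two quadratic terms $\theta(z,t)\theta(x,y)$ and $\theta(y,t)\theta(x,z)$ contribute $\varphi(t)\varphi(z)\varphi(y)\varphi(x)$ and $\varphi(t)\varphi(y)\varphi(z)\varphi(x)$.

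It then remains to check, word by word, that the coefficients cancel. There are four distinct degree-$4$ words in $\varphi$, all ending in $\varphi(x)$. For each word we collect its $\varepsilon$-factors and verify that they sum to zero. The most convenient method is to normalize every coefficient to a product of $\varepsilon(\bar{a},\bar{b})$ with a fixed ordering of the pair. If some sign fails to cancel, that would indicate a convention mismatch in Definition~\ref{d5.2}. In that case we would recheck against the adjoint example, where $\theta(x,y)(z)=\varepsilon(\bar{x}+\bar{y},\bar{z})[z,x,y]$ corresponds to $\varphi=\mathrm{ad}$, to calibrate the signs.
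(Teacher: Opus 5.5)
Your proposal is correct and follows essentially the paper's route: substitute $\theta(x,y)=\varepsilon(\bar{x},\bar{y})\varphi(y)\varphi(x)$, expand everything into words in $\varphi$, and cancel coefficients word by word. Your identity $D(x,y)=\varphi([x,y])$ is a mild streamlining: it reduces \eqref{rep3} to three block-words (the color Leibniz rule for the commutator), where the paper expands into six four-letter words.

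The check you leave open in Step 4 does go through. The words $\varphi(y)\varphi(z)\varphi(t)\varphi(x)$ and $\varphi(z)\varphi(y)\varphi(t)\varphi(x)$ cancel directly between $\theta(x,[y,z,t])$ and $D(y,z)\theta(x,t)$. The coefficients of $\varphi(t)\varphi(y)\varphi(z)\varphi(x)$ and $\varphi(t)\varphi(z)\varphi(y)\varphi(x)$ cancel once each $\varepsilon$ is expanded by bimultiplicativity. So no convention mismatch arises, and your fallback is unnecessary.
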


\begin{proof}
    Let us show that $\theta$ satisfies the conditions of  \hyperref[d5.2]{Definition ~\ref*{d5.2}}.

    $1$. Let $v\in V_{\alpha} $ and $x,y\in \mathcal{H}(L)$, $\varphi(x)(v)\in V_{\alpha+\bar{x}} $.  Then $\varphi(y)(\varphi(x)(v))\in V_{\alpha+\bar{x}+\bar{y}} $. 

    $2.$ Let $x,y,z,t\in \mathcal{H}(T) $. We have \begin{align*}   
    &\varepsilon(\bar{x}+\bar{y},\bar{z}+\bar{t})\theta(z,t)\theta(x,y)-\varepsilon(\bar{x},\bar{y})\varepsilon(\bar{x}+\bar{z},\bar{t})\theta(y,t)\theta(x,z) \\ &-\theta(x,[y,z,t])+ \varepsilon(\bar{x},\bar{y}+\bar{z})D(y,z)\theta(x,t) \\ 
&=\varepsilon(\bar{x}+\bar{y},\bar{z}+\bar{t})\varepsilon(\bar{z},\bar{t})\varepsilon(\bar{x},\bar{y})\varphi(t)\varphi(z)\varphi(y)\varphi(x)\\
&-\varepsilon(\bar{x},\bar{y})\varepsilon(\bar{x}+\bar{z},\bar{t})\varepsilon(\bar{y},\bar{t})\varepsilon(\bar{x},\bar{z})\varphi(t)\varphi(y)\varphi(z)\varphi(x) - \varepsilon(\bar{x},\bar{y}+\bar{z}+\bar{t})\varphi([[y,z],t])\varphi(x) \\ &+ \varepsilon(\bar{x},\bar{y}+\bar{z})(\varepsilon(\bar{y},\bar{z})\varepsilon(\bar{z},\bar{y})\varphi(y)\varphi(z)- \varepsilon(\bar{y},\bar{z})\varphi(z)\varphi(y))\varepsilon(\bar{x},\bar{t})\varphi(t)\varphi(x)\\ &=\varepsilon(\bar{x}+\bar{y},\bar{z}+\bar{t})\varepsilon(\bar{z},\bar{t})\varepsilon(\bar{x},\bar{y})\varphi(t)\varphi(z)\varphi(y)\varphi(x) \\ &- \varepsilon(\bar{x},\bar{y})\varepsilon(\bar{x}+\bar{z},\bar{t})\varepsilon(\bar{y},\bar{t})\varepsilon(\bar{x},\bar{z})\varphi(t)\varphi(y)\varphi(z)\varphi(x) \\ &- \varepsilon(\bar{x},\bar{y}+\bar{z}+\bar{t})(\varphi(y)\varphi(z)\varphi(t)\varphi(x)- \varepsilon(\bar{y},\bar{z})\varphi(z)\varphi(y)\varphi(t)\varphi(x)\\ &- \varepsilon(\bar{y}+\bar{z},\bar{t})\varphi(t)\varphi(y)\varphi(z)\varphi(x)+ \\  
&\varepsilon(\bar{y}+\bar{z},\bar{t})\varepsilon(\bar{t},\bar{z})\varphi(t)\varphi(z)\varphi(y)\varphi(x))+ \varepsilon(\bar{x},\bar{y}+\bar{z})\varepsilon(\bar{x},\bar{t})\varphi(y)\varphi(z)\varphi(t)\varphi(x) \\ & -\varepsilon(\bar{x},\bar{y}+\bar{z})\varepsilon(\bar{y},\bar{z})\varepsilon(\bar{x},\bar{t})\varphi(z)\varphi(y)\varphi(t)\varphi(x) \\
&= (\varepsilon(\bar{x}+\bar{y},\bar{z}+\bar{t})\varepsilon(\bar{z},\bar{t})\varepsilon(\bar{x},\bar{y}) - \varepsilon(\bar{x},\bar{y}+\bar{z}+\bar{t})\varepsilon(\bar{y}+\bar{z},\bar{t})\varepsilon(\bar{y},\bar{z}))\varphi(t)\varphi(z)\varphi(y)\varphi(x) \\
&+ (\varepsilon(\bar{x},\bar{y}+\bar{z}+\bar{t})\varepsilon(\bar{y}+\bar{z},\bar{t})- \varepsilon(\bar{x},\bar{y})\varepsilon(\bar{x}+\bar{z},\bar{t})\varepsilon(\bar{y},\bar{t})\varepsilon(\bar{x},\bar{z}))\varphi(t)\varphi(y)\varphi(z)\varphi(x) \\
    &+ (\varepsilon(\bar{x},\bar{y}+\bar{z})\varepsilon(\bar{x},\bar{t})- \varepsilon(\bar{x},\bar{y}+\bar{z}+\bar{t}))\varphi(y)\varphi(z)\varphi(t)\varphi(x) \\
    &+ (\varepsilon(\bar{x},\bar{y}+\bar{z}+\bar{t})\varepsilon(\bar{y},\bar{z})- \varepsilon(\bar{x},\bar{y}+\bar{z})\varepsilon(\bar{y},\bar{z})\varepsilon(\bar{x},\bar{t}))\varphi(z)\varphi(y)\varphi(t)\varphi(x) \\
    &= 0. 
    \end{align*}

    $3.$ Let $x,y,z,t\in \mathcal{H}(T) $. We have \begin{align*}
        &\varepsilon(\bar{x}+\bar{y},\bar{z}+\bar{t})\theta(z,t)D(x,y)-D(x,y)\theta(z,t)+\theta([x,y,z],t)+ \varepsilon(\bar{x}+\bar{y},\bar{z})\theta(z,[x,y,t])\\ & = \varepsilon(\bar{x}+\bar{y},\bar{z}+\bar{t})\varepsilon(\bar{z},\bar{t})\varphi(t)\varphi(z)(\varepsilon(\bar{x},\bar{y})\varepsilon(\bar{y},\bar{x})\varphi(x)\varphi(y)-\varepsilon(\bar{x},\bar{y})\varphi(y)\varphi(x)) \\ & - (\varepsilon(\bar{x},\bar{y})\varepsilon(\bar{y},\bar{x})\varphi(x)\varphi(y)- \varepsilon(\bar{x},\bar{y})\varphi(y)\varphi(x))\varepsilon(\bar{z},\bar{t})\varphi(t)\varphi(z) + \varepsilon(\bar{x}+\bar{y}+\bar{z},\bar{t})\varphi(t)\varphi(x)\varphi(y)\varphi(z) \\ & - \varepsilon(\bar{x}+\bar{y}+\bar{z},\bar{t})\varepsilon(\bar{x},\bar{y})\varphi(t)\varphi(y)\varphi(x)\varphi(z) - \varepsilon(\bar{x}+\bar{y}+\bar{z},\bar{t})\varepsilon(\bar{x}+\bar{y},\bar{z})\varphi(t)\varphi(z)\varphi(x)\varphi(y) \\ & + \varepsilon(\bar{x}+\bar{y}+\bar{z},\bar{t})\varepsilon(\bar{x}+\bar{y},\bar{z})\varepsilon(\bar{x},\bar{y})\varphi(t)\varphi(z)\varphi(y)\varphi(x) + \varepsilon(\bar{x}+\bar{y},\bar{z})\varepsilon(\bar{z},\bar{x} \\ & +\bar{y}+\bar{t})\varphi(x)\varphi(y)\varphi(t)\varphi(z) -\varepsilon(\bar{x}+\bar{y},\bar{z})\varepsilon(\bar{z},\bar{x}+\bar{y}+\bar{t})\varepsilon(\bar{x},\bar{y}) \varphi(y)\varphi(x)\varphi(t)\varphi(z) \\ & - \varepsilon(\bar{x}+\bar{y},\bar{z})\varepsilon(\bar{z},\bar{x} +\bar{y}+\bar{t})\varepsilon(\bar{x}+\bar{y},\bar{t})\varphi(t)\varphi(x)\varphi(y)\varphi(z) \\ & \varepsilon(\bar{x}+\bar{y},\bar{z})\varepsilon(\bar{z},\bar{x}+\bar{y}+\bar{t})\varepsilon(\bar{x}+\bar{y},\bar{t})\varepsilon(\bar{x},\bar{y})\varphi(t)\varphi(y)\varphi(x)\varphi(z) \\ & = (\varepsilon(\bar{x}+\bar{y},\bar{z}+\bar{t})\varepsilon(\bar{z},\bar{t})-\varepsilon(\bar{x}+\bar{y}+\bar{z},\bar{t})\varepsilon(\bar{x}+\bar{y},\bar{z}))\varphi(t)\varphi(z)\varphi(x)\varphi(y) \\ & + (-\varepsilon(\bar{x}+\bar{y},\bar{z}+\bar{t})\varepsilon(\bar{z},\bar{t})\varepsilon(\bar{x},\bar{y})+ \varepsilon(\bar{x}+\bar{y}+\bar{z},\bar{t})\varepsilon(\bar{x}+\bar{y},\bar{z})\varepsilon(\bar{x},\bar{y}))\varphi(t)\varphi(z)\varphi(y)\varphi(x) \\ & + (\varepsilon(\bar{x}+\bar{y},\bar{z})\varepsilon(\bar{z},\bar{x}+\bar{y}+\bar{t})-\varepsilon(\bar{z},\bar{t}))\varphi(x)\varphi(y)\varphi(t)\varphi(z) \\ & + (\varepsilon(\bar{x},\bar{y})\varepsilon(\bar{z},\bar{t})- \varepsilon(\bar{x}+\bar{y},\bar{z})\varepsilon(\bar{z},\bar{x}+\bar{y}+\bar{t})\varepsilon(\bar{x},\bar{y}))\varphi(y)\varphi(x)\varphi(t)\varphi(z) \\ & 
 +(\varepsilon(\bar{x}+\bar{y}+\bar{z},\bar{t})- \varepsilon(\bar{x}+\bar{y},\bar{z})\varepsilon(\bar{z},\bar{x}+\bar{y}+\bar{t})\varepsilon(\bar{x}+\bar{y},\bar{t}))\varphi(t)\varphi(x)\varphi(y)\varphi(z) \\ & + (\varepsilon(\bar{x}+\bar{y},\bar{z})\varepsilon(\bar{z},\bar{x}+\bar{y}+\bar{t})\varepsilon(\bar{x}+\bar{y},\bar{t})\varepsilon(\bar{x},\bar{y})- \varepsilon(\bar{x}+\bar{y}+\bar{z},\bar{t})\varepsilon(\bar{x},\bar{y}))\varphi(t)\varphi(y)\varphi(x)\varphi(z) \\ & =0.
        \end{align*} 
\end{proof}

\begin{prop}
    Let $(\theta,V)$ be a representation of a Lie color triple system $T$. Consider the operation $[\cdot,\cdot,\cdot]_V: (T\oplus V)\times (T\oplus V)\times (T\oplus V)\to T\oplus V  $ given, on homogeneous elements, by \begin{center}
        $[(x,a),(y,b),(z,c)]_V=([x,y,z], \varepsilon(\bar{x},\bar{y}+\bar{z})\theta(y,z)(a)-\varepsilon(\bar{y},\bar{z})\theta(x,z)(b)+ D(x,y)(c)) $.
    \end{center} Then, $T\oplus V$ is a Lie color triple system.
\end{prop}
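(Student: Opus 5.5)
We verify the four axioms \eqref{lts1}--\eqref{lts4} of Definition~\ref{d2.8} directly for $[\cdot,\cdot,\cdot]_V$ on homogeneous elements $X=(x,a)$, $Y=(y,b)$, $Z=(z,c)$, $U=(u,d)$, $W=(v,e)$. Here $T\oplus V$ carries the grading $(T\oplus V)_\gamma=T_\gamma\oplus V_\gamma$, and homogeneous pairs have $\bar a=\bar x$, and so on. Throughout we use that $V$ is an abelian ideal: any bracket with two or more entries in $V$ vanishes. By trilinearity, each identity therefore splits into its $T$-component, which holds because $T$ is a Lie color triple system, and a sum of terms linear in exactly one $V$-entry. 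We may check these separately, taking one entry in $V$ and the others in $T$.

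\textbf{Axioms \eqref{lts1}--\eqref{lts3}.} Axiom \eqref{lts1} follows from \eqref{rep1} together with the fact that $D(x,y)$ also has degree $\bar x+\bar y$. For \eqref{lts2}, swapping $(x,a)$ and $(y,b)$ turns the $a$-term $\varepsilon(\bar x,\bar y+\bar z)\theta(y,z)(a)$ into the $b$-slot. Multiplying by $-\varepsilon(\bar x,\bar y)$ and using $\varepsilon(\bar x,\bar z)\varepsilon(\bar y,\bar z)$ manipulations, we get agreement with $-\varepsilon(\bar y,\bar z)\theta(x,z)(b)$. The $c$-term requires $D(y,x)=-\varepsilon(\bar y,\bar x)D(x,y)$, which is immediate from $D(x,y)=\varepsilon(\bar x,\bar y)\theta(y,x)-\theta(x,y)$. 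For \eqref{lts3}, put the $V$-element in the first slot, say $a$. Collecting the cyclic terms gives a multiple of
\[
\varepsilon(\bar x,\bar y+\bar z)\theta(y,z)(a)\varepsilon(\bar z,\bar x)
-\varepsilon(\bar x,\bar y)\varepsilon(\bar z,\bar x)\theta(y,z)(a)
+\varepsilon(\bar y,\bar z)D(y,z)(a).
\]
Expanding $D$, this cancels using only the bicharacter rules. The other positions follow by cyclic symmetry.

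\textbf{Axiom \eqref{lts4}.} This is the main computation. We treat separately the five cases where the $V$-entry sits in position $u,v,x,y$ or $z$ of $[U,W,[X,Y,Z]]=[[U,W,X],Y,Z]+\cdots$.
\begin{itemize}
\item $V$-entry in $z$ (value $c$): the identity becomes $D(u,v)D(x,y)(c)=D([u,v,x],y)(c)+\varepsilon(\cdot)D(x,[u,v,y])(c)+\varepsilon(\cdot)D(x,y)D(u,v)(c)$, which says $[D(u,v),D(x,y)]=D([u,v,x],y)+\cdots$.
\item $V$-entry in $x$ or $y$: the identity is an operator identity of the shape of \eqref{rep3}, namely $D(u,v)\theta(y,z)-\varepsilon\,\theta(y,z)D(u,v)=\theta([u,v,y],z)+\varepsilon\,\theta(y,[u,v,z])$, up to signs.
\item $V$-entry in $u$ or $v$: the identity becomes \eqref{rep2}, with $\theta(v,[x,y,z])$ expressed through products $\theta\theta$ and $D\theta$.
\end{itemize}
For the $z$-case, we expand $D$ in terms of $\theta$ and apply \eqref{rep3} twice: once as written, and once with the roles of $(x,y)$ swapped, to handle $\theta(y,x)$. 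We also need \eqref{rep2} to convert the leftover $\theta(\cdot,[\cdot,\cdot,\cdot])$ terms.

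The main obstacle is bookkeeping of the $\varepsilon$-factors. The $V$-position cases must be matched precisely to \eqref{rep2} and \eqref{rep3} after renaming variables, and the first combination to try may come out with permuted arguments. If it does not match directly, the fallback is to first derive from \eqref{rep3} the derivation-type identity for $D$, namely $[D(u,v),\theta(z,t)]=\theta([u,v,z],t)+\varepsilon\,\theta(z,[u,v,t])$. Then we use \eqref{lts2} and \eqref{lts3} in $T$ to rewrite arguments until each case coincides with one of the defining relations.
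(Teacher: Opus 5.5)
Your proposal is correct and follows the paper's route: you verify \eqref{lts1}--\eqref{lts4} componentwise and, for \eqref{lts4}, sort the $V$-component by which slot carries the $V$-entry. This reduces the $u,v$ slots to \eqref{rep2}, the $x,y$ slots to \eqref{rep3}, and the $z$ slot to \eqref{rep3} applied twice (no \eqref{rep2} is needed there); the paper writes out only the $d$-terms and says the rest is similar. One slip: in \eqref{lts3} the collected $a$-terms are $\varepsilon(\bar x,\bar y)\bigl(\theta(y,z)+D(y,z)-\varepsilon(\bar y,\bar z)\theta(z,y)\bigr)(a)$, which vanishes by the definition of $D$, but your displayed expression has wrong $\varepsilon$-factors and $\theta(y,z)$ in place of $\theta(z,y)$, so it does not cancel as written.
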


Here, a homogeneous element of $T\oplus V $ is of the form $(x,v) $, where $x\in T_\gamma$, $v\in V_\gamma$ for $\gamma\in G $ and  the graduation on $T\oplus V $ is that $(x,v) $ is of degree $\gamma \in G$ if $x,v$ are of degree $\gamma $.
\begin{proof}
    We will show that $T\oplus V$, endowed with the bracket $[\cdot,\cdot,\cdot]_V $ satisfies the conditions being a Lie color triple system in \hyperref[d2.8]{Definition~\ref*{d2.8}}. Let $(x,a),(y,b),(z,c),(u,d),(v,e) \in \mathcal{H}(T\oplus V) $.

    For Condition \eqref{lts1}, we have $[x,y,z]\subset T_{\bar{x}+\bar{y}+\bar{z}} $ and by definition of $\theta$, we have \\ $\theta(y,z)(a), \theta(x,z)(b), D(x,y)(c) \in V_{\bar{x}+\bar{y}+\bar{z}} $. Thus  $[(x,a),(y,b),(z,c)]_V\in (T\oplus V)_{\bar{x}+\bar{y}+\bar{z}} $.

    For Condition \eqref{lts2},  we have \begin{align*}
        & [(x,a),(y,b),(z,c)]_V  = ([x,y,z],\varepsilon(\bar{x},\bar{y}+\bar{z})\theta(y,z)(a)- \varepsilon(\bar{y},\bar{z})\theta(x,z)(b)+ D(x,y)(c)) \\ & = -\varepsilon(\bar{x},\bar{y})(-\varepsilon(\bar{y},\bar{x})[x,y,z], -\varepsilon(\bar{x},\bar{z})\theta(y,z)(a)+\varepsilon(\bar{y},\bar{x}+\bar{z})\theta(x,z)(b)+D(y,x)(c)) \\ & = -\varepsilon(\bar{x},\bar{y})([y,x,z],\varepsilon(\bar{y},\bar{x}+\bar{z})\theta(x,z)(b)-\varepsilon(\bar{x},\bar{z})\theta(y,z)(a)+D(y,x)(c)) \\ & = -\varepsilon(\bar{x},\bar{y})[(y,b),(x,a),(z,c)]_V.
    \end{align*}

     For Condition \eqref{lts3}, \begin{align*}
        & \varepsilon(\bar{z},\bar{x})[(x,a),(y,b),(z,c)]_V + \varepsilon(\bar{x},\bar{y})[(y,b),(z,c),(x,a)]_V + \varepsilon(\bar{y},\bar{z})[(z,c),(x,a),(y,b)]_V \\ & =\varepsilon(\bar{z},\bar{x})([x,y,z],\varepsilon(\bar{x},\bar{y}+\bar{z})\theta(y,z)(a)-\varepsilon(\bar{y},\bar{z})\theta(x,z)(b) + D(x,y)(c)) \\ & + \varepsilon(\bar{x},\bar{y})([y,z,x], \varepsilon(\bar{y},\bar{x}+\bar{z})\theta(z,x)(b)- \varepsilon(\bar{z},\bar{x})\theta(y,x)(c)+ D(y,z)(a)) \\ & + \varepsilon(\bar{y},\bar{z})([z,x,y], \varepsilon(\bar{z},\bar{x}+\bar{y})\theta(x,y)(c)-\varepsilon(\bar{x},\bar{y})\theta(z,y)(a) + D(z,x)(b)) \\ & = \varepsilon(\bar{z},\bar{x})([x,y,z],\varepsilon(\bar{x},\bar{y}+\bar{z})\theta(y,z)(a)-\varepsilon(\bar{y},\bar{z})\theta(x,z)(b) + \varepsilon(\bar{x},\bar{y})\theta(y,x)(c)-\theta(x,y)(c)) \\ & + \varepsilon(\bar{x},\bar{y})([y,z,x], \varepsilon(\bar{y},\bar{x}+\bar{z})\theta(z,x)(b)- \varepsilon(\bar{z},\bar{x})\theta(y,x)(c)+ \varepsilon(\bar{y},\bar{z})\theta(z,y)(a)-\theta(y,z)(a)) \\ & + \varepsilon(\bar{y},\bar{z})([z,x,y], \varepsilon(\bar{z},\bar{x}+\bar{y})\theta(x,y)(c)-\varepsilon(\bar{x},\bar{y})\theta(z,y)(a) + \varepsilon(\bar{z},\bar{x})\theta(x,z)(b)-\theta(z,x)(b)) \\ & =(0,0).
    \end{align*}

   For Condition \eqref{lts4}, on the one hand we have  \begin{align*}
        & [(u,d),(v,e),[(x,a),(y,b),(z,c)]]_V \\ & = [(u,d),(v,e),([x,y,z],\varepsilon(\bar{x},\bar{y}+\bar{z})\theta(y,z)(a)-\varepsilon(\bar{y},\bar{z})\theta(x,z)(b)+ D(x,y)(c)) ]_V \\ & =([u,v,[x,y,z]],\varepsilon(\bar{u},\bar{v}+\bar{x}+\bar{y}+\bar{z})\theta(v,[x,y,z])(d) - \varepsilon(\bar{v},\bar{x}+\bar{y}+\bar{z})\theta(u,[x,y,z])(e) \\ &+ D(u,v)(\varepsilon(\bar{x},\bar{y}+\bar{z})\theta(y,z)(a)-\varepsilon(\bar{y},\bar{z})\theta(x,z)(b)+ D(x,y)(c))).
    \end{align*} On the other hand, we have \begin{align*}
       & [[(u,d),(v,e),(x,a)]_V,(y,b),(z,c)]_V \\ &=([[u,v,x],y,z],\varepsilon(\bar{u}+\bar{v}+\bar{x},\bar{y}+\bar{z})\theta(y,z)(\varepsilon(\bar{u},\bar{v}+\bar{x})\theta(v,x)(d)-\varepsilon(\bar{v},\bar{x})\theta(u,x)(e)+D(u,v)(a))\\ &-\varepsilon(\bar{y},\bar{z})\theta([u,v,x],z)(b)+ D([u,v,x],y)(c)),
        \end{align*} \begin{align*}
           & \varepsilon(\bar{u}+\bar{v},\bar{x})[(x,a),[(u,d),(v,e),(y,b)]_V,(z,c)]_V \\ &= \varepsilon(\bar{u}+\bar{v},\bar{x})([x,[u,v,y],z],\varepsilon(\bar{x},\bar{u}+\bar{v}+\bar{y}+\bar{z})\theta([u,v,y],z)(a) \\ &- \varepsilon(\bar{u}+\bar{v}+\bar{y},\bar{z})\theta(x,z)(\varepsilon(\bar{u},\bar{v}+\bar{y})\theta(v,y)(d)- \varepsilon(\bar{v},\bar{y})\theta(u,y)(e)+D(u,v)(b))+D(x,[u,v,y])(c)),
    \end{align*} \begin{align*}
        & \varepsilon(\bar{u}+\bar{v},\bar{x}+\bar{y})[(x,a),(y,b),[(u,d),(v,e),(z,c)]_V]_V \\ &=  \varepsilon(\bar{u}+\bar{v},\bar{x}+\bar{y})([x,y,[u,v,z]], \varepsilon(\bar{x},\bar{y}+\bar{u}+\bar{v}+\bar{z})\theta(y,[u,v,z])(a)- \varepsilon(\bar{y},\bar{u}+\bar{v}+\bar{z})\theta(x,[u,v,z])(b) \\ & + D(x,y)(\varepsilon(\bar{u},\bar{v}+\bar{z})\theta(v,z)(d)-\varepsilon(\bar{v},\bar{z})\theta(u,z)(e) + D(u,v)(c))).
    \end{align*} The first component of the sum \begin{align*}
         &[[(u,d),(v,e),(x,a)]_V,(y,b),(z,c)]_V+ \varepsilon(\bar{u}+\bar{v},\bar{x})[(x,a),[(u,d),(v,e),(y,b)]_V,(z,c)]_V \\ & + \varepsilon(\bar{u}+\bar{v},\bar{x}+\bar{y})[(x,a),(y,b),[(u,d),(v,e),(z,c)]_V]_V 
    \end{align*} is the same as $[(u,d),(v,e),[(x,a),(y,b),(z,c)]_V]_V $. For the second component,  we  group, for example,  the terms evaluated in $d$. Therefore, we have \begin{align*}
        &\varepsilon(\bar{u}+\bar{v}+\bar{x},\bar{y}+\bar{z})\varepsilon(\bar{u},\bar{v}+\bar{x})\theta(y,z)\theta(v,x)(d)-\varepsilon(\bar{u}+\bar{v},\bar{x})\varepsilon(\bar{u}+\bar{v}+\bar{y},\bar{z})\varepsilon(\bar{u},\bar{v}+\bar{y})\theta(x,z)\theta(v,y)(d) \\ &+ \varepsilon(\bar{u}+\bar{v},\bar{x}+\bar{y})\varepsilon(\bar{u},\bar{v}+\bar{z})D(x,y)\theta(v,z)(d) \\ &= \varepsilon(\bar{u},\bar{x}+\bar{y}+\bar{z}+\bar{v})(\varepsilon(\bar{v}+\bar{x},\bar{y}+\bar{z}) \theta(y,z)\theta(v,x)(d)-\varepsilon(\bar{v},\bar{x})\varepsilon(\bar{v}+\bar{y},\bar{z})\theta(x,z)\theta(v,y)(d) \\ &+\varepsilon(\bar{v},\bar{x}+\bar{y})D(x,y)\theta(v,z)(d)) \\ & = \varepsilon(\bar{u},\bar{v}+\bar{x}+\bar{y}+\bar{z})\theta(v,[x,y,z])
    \end{align*} based  on the definition of a representation of $T$. We  do the same with the other terms and then Condition \eqref{lts4} is proved. 
\end{proof}

Let $L$ be a Lie color algebra and $\varphi:L\to \mathrm{End}(V)$ be a representation. We define on $L\oplus V $ a structure of Lie color algebra with the bracket \begin{center}
    $[(x,v),(y,w)]= ([x,y],\varphi(x)(w) - \varepsilon(\bar{x},\bar{y})\varphi(y)(v)) ,$ for $(x,v),(y,w)\in \mathcal{H}(L\oplus V) $.
\end{center}
\begin{prop}
    Let $T $ be a Lie color triple system, and $(\theta,V) $ be a representation of $T$ induced by a representation $(\varphi,V)$ of $L_S(T) $. Then, $T\oplus V $ is embedded into $L_S(T)\oplus V$. 
\end{prop}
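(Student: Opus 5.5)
The plan is to show that the inclusion $\iota:T\oplus V\to L_S(T)\oplus V$, $(x,a)\mapsto (x,a)$, is an embedding in the sense of the paper: it is injective and degree-preserving, and it satisfies
\[
\iota([(x,a),(y,b),(z,c)]_V)=[[\iota(x,a),\iota(y,b)],\iota(z,c)]
\]
for all homogeneous elements. Here the bracket on the right is the semi-direct product bracket on $L_S(T)\oplus V$ built from $\varphi$. Injectivity is clear because $T\hookrightarrow L_S(T)$, and the grading is preserved by construction. So the whole content is the bracket identity, and by trilinearity it suffices to check it on homogeneous elements.

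For the $T$-component, $[[x,y],z]=[x,y,z]$ in $L_S(T)$, since $T$ sits inside its standard embedding. For the $V$-component, first compute
\[
[(x,a),(y,b)]=([x,y],\varphi(x)(b)-\varepsilon(\bar{x},\bar{y})\varphi(y)(a)).
\]
Bracketing this with $(z,c)$ gives the $V$-part
\[
\varphi([x,y])(c)-\varepsilon(\bar{x}+\bar{y},\bar{z})\varphi(z)\varphi(x)(b)+\varepsilon(\bar{x},\bar{y})\varepsilon(\bar{x}+\bar{y},\bar{z})\varphi(z)\varphi(y)(a).
\]

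It remains to match this with $\varepsilon(\bar{x},\bar{y}+\bar{z})\theta(y,z)(a)-\varepsilon(\bar{y},\bar{z})\theta(x,z)(b)+D(x,y)(c)$. Here I read ``induced'' as in the earlier proposition, so that $\theta(x,y)=\varepsilon(\bar{x},\bar{y})\varphi(y)\varphi(x)$. This gives $D(x,y)=\varepsilon(\bar{x},\bar{y})\theta(y,x)-\theta(x,y)=\varphi(x)\varphi(y)-\varepsilon(\bar{x},\bar{y})\varphi(y)\varphi(x)=\varphi([x,y])$, since $\varphi$ is a morphism into $\mathrm{End}(V)$ with the color commutator. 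This settles the $c$-terms.

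The $b$-term of the triple bracket becomes $-\varepsilon(\bar{y},\bar{z})\varepsilon(\bar{x},\bar{z})\varphi(z)\varphi(x)(b)=-\varepsilon(\bar{x}+\bar{y},\bar{z})\varphi(z)\varphi(x)(b)$, which matches. The $a$-term becomes $\varepsilon(\bar{x},\bar{y}+\bar{z})\varepsilon(\bar{y},\bar{z})\varphi(z)\varphi(y)(a)$. Its coefficient expands to $\varepsilon(\bar{x},\bar{y})\varepsilon(\bar{x},\bar{z})\varepsilon(\bar{y},\bar{z})$, which equals $\varepsilon(\bar{x},\bar{y})\varepsilon(\bar{x}+\bar{y},\bar{z})$ by bimultiplicativity.

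The only delicate point is this bookkeeping of bicharacter signs, and making sure the convention for $\theta$ in terms of $\varphi$ is the one fixed earlier. If a different convention were intended, the same computation would show which normalization of $\theta$ makes the identity hold. Once the three components agree, $\iota$ is an injective embedding of $T\oplus V$ into the Lie color algebra $L_S(T)\oplus V$.
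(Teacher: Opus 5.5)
Your proposal is correct and follows the paper's proof essentially verbatim. Like the paper, you verify $[(x,a),(y,b),(z,c)]_V=[[(x,a),(y,b)],(z,c)]$ componentwise by substituting $\theta(x,y)=\varepsilon(\bar{x},\bar{y})\varphi(y)\varphi(x)$ and $D(x,y)=\varphi(x)\varphi(y)-\varepsilon(\bar{x},\bar{y})\varphi(y)\varphi(x)=\varphi([x,y])$ into the semi-direct product bracket on $L_S(T)\oplus V$, and your bicharacter bookkeeping for the $a$-, $b$- and $c$-terms checks out.
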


\begin{proof}
   The sum $T\oplus V $ is embedded into $L_S(T)\oplus T $ is embedded into $L_S(T) $. We will show that the following relation  $[(x,u),(y,v),(z,w)]_V=[[(x,u),(y,v)]_V,(z,w)]_V $ holds for all $(x,u),(y,v),(z,w)\in T\oplus V \subset L_S(T)\oplus V $. We have \begin{align*}
       [(x&,u),(y,v),(z,w)]_V  =([x,y,z], \varepsilon(\bar{x},\bar{y}+\bar{z})\theta(y,z)(u)-\varepsilon(\bar{y},\bar{z})\theta(x,z)(v)+ D(x,y)(w))\\ &= ([x,y,z],  \varepsilon(\bar{x},\bar{y}+\bar{z})\varepsilon(\bar{y},\bar{z})\varphi(z)\varphi(y)(u) -\varepsilon(\bar{y},\bar{z})\varepsilon(\bar{x},\bar{y})\varphi(z)\varphi(x)(v) + \varphi(x)\varphi(y)(w)\\ & \quad \quad -\varepsilon(\bar{x},\bar{y})\varphi(y)\varphi(x)(w)).   
   \end{align*} \begin{align*}
       [[(&x,u),(y,v)]_V ,(z,w)]_V =  [([x,y],\varphi(x)(v)- \varepsilon(\bar{x},\bar{y})\varphi(y)(u)),(z,w)]_V \\ =& ([x,y,z], \varphi([x,y])(w)- \varepsilon(\bar{x}+\bar{y},\bar{z})\varphi(z)\varphi(x)(v) + \varepsilon(\bar{x}+\bar{y},\bar{z})\varepsilon(\bar{x},\bar{y})\varphi(z)\varphi(y)(u)) \\ = & ([x,y,z], \varphi(x)\varphi(y)(w) - \varepsilon(\bar{x},\bar{y})\varphi(y)\varphi(x)(w) - \varepsilon(\bar{x}+\bar{y},\bar{z})\varphi(z)\varphi(x)(v) \\ &\quad \quad+ \varepsilon(\bar{x}+\bar{y},\bar{z})\varepsilon(\bar{x},\bar{y})\varphi(z)\varphi(y)(u)) \\ =& [(x,u),(y,v),(z,w)]_V.
   \end{align*} Thus, the result follows.

\end{proof}

\subsection{Representations of restricted Lie color algebras}

In \cite{F01}, a representation of a restricted Lie color algebra is defined as follows:

\begin{defi}[\cite{F01}]
    A representation of a restricted Lie color algebra $L$ is a morphism of restricted Lie color algebras $\rho:L\to \mathrm{End}(V)_G$ where $V$ is a $G$-graded vector space and $\mathrm{End}(V)$ is a Lie color algebra with the bracket $[f,g]=fg-\varepsilon(\bar{f},\bar{g})gf $ and the $p$-map is given by the $p^{th}$ power.
\end{defi}

Following the idea in \cite{E23}, we first show that under some conditions we can put on $L\oplus V $ a restricted structure. 

Let $L$ be a restricted Lie color algebra and $\varphi:L\to \mathrm{End}(V)$ be a restricted representation. We define on $L\oplus V $ a structure of Lie color algebra with the bracket \begin{center}
    $[(x,v),(y,w)]= ([x,y],\varphi(x)(w) - \varepsilon(\bar{x},\bar{y})\varphi(y)(v)) $ for all $(x,v),(y,w)\in \mathcal{H}(L\oplus V) $.
\end{center}

\begin{lem}
    Let $L$ be a Lie color algebra, and $(V,\varphi) $ a representation. Let $\gamma\in G_+, x\in L_\gamma$ then, for all  $y\in \mathcal{H}(L) $, $n\in \mathbf{N}$, we have \begin{center}
        $\varphi(ad_x^{n}(y))= \sum\limits_{k=0}^n(-1)^k\varepsilon(\bar{x},\bar{y})^k\binom{n}{k}\varphi(x)^{n-k}\varphi(y)\varphi(x)^k $.
    \end{center}
\end{lem}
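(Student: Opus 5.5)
We argue by induction on $n$, working directly with the fact that $\varphi$ is a morphism of Lie color algebras into $\mathrm{End}(V)$. There the bracket of homogeneous elements is $[f,g]=fg-\varepsilon(\bar f,\bar g)gf$, so for homogeneous $z$
\begin{align*}
\varphi([x,z])=\varphi(x)\varphi(z)-\varepsilon(\bar{x},\bar{z})\varphi(z)\varphi(x).
\end{align*}
The case $n=0$ is trivial, since both sides equal $\varphi(y)$. For $n=1$ the formula is exactly the displayed identity with $z=y$.

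For the inductive step, assume the formula for $n$. Put $z=\ad_x^n(y)$. It is homogeneous of degree $\bar{y}+n\bar{x}$, and $\varepsilon(\bar{x},\bar{z})=\varepsilon(\bar{x},\bar{y})\varepsilon(\bar{x},\bar{x})^n=\varepsilon(\bar{x},\bar{y})$ because $\gamma\in G_+$. This is the one place where the hypothesis $\bar{x}\in G_+$ is used: it makes the sign attached to every commutation with $\varphi(x)$ independent of $n$. Applying the displayed identity to this $z$ and then the induction hypothesis gives
\begin{align*}
\varphi(\ad_x^{n+1}(y)) &= \sum_{k=0}^n(-1)^k\varepsilon(\bar{x},\bar{y})^k\tbinom{n}{k}\varphi(x)^{n+1-k}\varphi(y)\varphi(x)^k\\
&\quad - \sum_{k=0}^n(-1)^k\varepsilon(\bar{x},\bar{y})^{k+1}\tbinom{n}{k}\varphi(x)^{n-k}\varphi(y)\varphi(x)^{k+1}.
\end{align*}

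Next, shift the index $k\mapsto k-1$ in the second sum, so that the general term becomes $(-1)^{k}\varepsilon(\bar{x},\bar{y})^{k}\binom{n}{k-1}\varphi(x)^{n+1-k}\varphi(y)\varphi(x)^k$. Both sums now involve the same monomials. Combining them with Pascal's rule $\binom{n}{k}+\binom{n}{k-1}=\binom{n+1}{k}$, and treating the boundary terms $k=0$ and $k=n+1$ separately, yields the formula for $n+1$. This is the same bookkeeping as in the proof of the color Leibniz rule earlier in the paper.

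No real obstacle is expected. The only point requiring care is the sign check $\varepsilon(\bar{x},\overline{\ad_x^n(y)})=\varepsilon(\bar{x},\bar{y})$, which relies on $\bar{x}\in G_+$.
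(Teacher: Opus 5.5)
Your proposal is correct and follows essentially the same route as the paper's proof: induction on $n$, writing $\varphi(\ad_x^{n+1}(y))=\varphi(x)\varphi(\ad_x^n(y))-\varepsilon(\bar{x},\bar{y})\varphi(\ad_x^n(y))\varphi(x)$, then shifting the index and applying Pascal's rule. Your explicit check that $\varepsilon(\bar{x},\bar{y}+n\bar{x})=\varepsilon(\bar{x},\bar{y})$ because $\bar{x}\in G_+$ is a step the paper uses without stating it.
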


\begin{proof}
   We perform by induction on $n$. The cases $n=0,1$ are immediate. Assume that the formula is true for $n\geq 1$. Let $y\in \mathcal{H}(L)$. \begin{align*}
        \varphi(ad_x^{n+1}(y)) =& \varphi(x)\varphi(ad_x^n(y))- \varepsilon(\bar{x},\bar{y})\varphi(ad_x^n(y))\varphi(x) \\ =& \varphi(x)(\sum\limits_{k=0}^n(-1)^k\varepsilon(\bar{x},\bar{y})^k\tbinom{n}{k}\varphi(x)^{n-k}\varphi(y)\varphi(x)^k ) \\  & - (\sum\limits_{k=0}^n(-1)^k\varepsilon(\bar{x},\bar{y})^{k+1}\tbinom{n}{k}\varphi(x)^{n-k}\varphi(y)\varphi(x)^k )\varphi(x) \\  =& \varphi(x)^{n+1}\varphi(y) + (-1)^{n+1}\varepsilon(\bar{x},\bar{y})^{n+1} \varphi(y)\varphi(x)^{n+1} \\ &+ \sum\limits_{k=1}^n(-1)^k\varepsilon(\bar{x},\bar{y})^k\tbinom{n}{k}\varphi(x)^{n-k+1}\varphi(y)\varphi(x)^k \\ & - \sum\limits_{k=0}^{n-1}(-1)^k\varepsilon(\bar{x},\bar{y})^{k+1}\tbinom{n}{k}\varphi(x)^{n-k}\varphi(y)\varphi(x)^{k+1} \\ = & \varphi(x)^{n+1}\varphi(y) + (-1)^{n+1}\varepsilon(\bar{x},\bar{y})^{n+1}\varphi(y)\varphi(x)^{n+1} \\ &+ \sum\limits_{k=1}^n(-1)^k\varepsilon(\bar{x},\bar{y})^k(\tbinom{n}{k}+\tbinom{n}{k-1})\varphi(x)^{n+1-k}\varphi(y)\varphi(x)^k \\ = & \sum\limits_{k=0}^{n+1}(-1)^k\varepsilon(\bar{x},\bar{y})^k\tbinom{n+1}{k}\varphi(x)^{n+1-k}\varphi(y)\varphi(x)^k .
    \end{align*} The conclusion follows.
\end{proof}

\begin{lem}\label{lem5.9}
    Let $L$ be a restricted Lie color algebra and  $(\varphi,V) $ a restricted representation of $L$. Let $\gamma\in G_+, (x,v)\in (L\oplus V)_\gamma $. Then for all $(y,w)\in \mathcal{H}(L\oplus V) $, all $n\in \mathbf{N}$, we have \begin{center}
        $\ad_{(x,v)}^n (y,w)= (\ad_x^n(y), \varphi(x)^n(w) + \sum\limits_{k=1}^n(-1)^k\varepsilon(\bar{x},\bar{y})^k\binom{n}{k}\varphi(x)^{n-k}\varphi(y)\varphi(x)^{k-1}(v)) $.
    \end{center} In particular, for $n=p$ we have \begin{center}
        $\ad_{(x,v)}^p(y,w)= (\ad_x^p(y), \varphi(x)^p(w) - \varepsilon(\bar{x},\bar{y})^p\varphi(y)\varphi^{p-1}(x)(v)). $
    \end{center}
\end{lem}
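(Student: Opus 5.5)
The plan is to argue by induction on $n$, using the bracket on $L\oplus V$ and the preceding lemma, which expresses $\varphi(\ad_x^{n}(y))$ as a sum of words in $\varphi(x)$ and $\varphi(y)$. Throughout, $\gamma=\bar{x}\in G_+$, so $\varepsilon(\bar{x},\bar{x})=1$ and hence $\varepsilon(\bar{x},n\bar{x}+\bar{y})=\varepsilon(\bar{x},\bar{y})$ for every $n$.

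\textbf{Base cases.} For $n=0$ both sides equal $(y,w)$, the sum being empty. For $n=1$ the definition of the bracket gives $[(x,v),(y,w)]=([x,y],\varphi(x)(w)-\varepsilon(\bar{x},\bar{y})\varphi(y)(v))$. This is exactly the formula, whose only summand $k=1$ is $-\varepsilon(\bar{x},\bar{y})\varphi(y)(v)$.

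\textbf{Induction step.} Assume the formula holds for $n$. Write $\ad_{(x,v)}^n(y,w)=(\ad_x^n(y),u_n)$, where $u_n$ is the claimed second component; this element is homogeneous of degree $n\bar{x}+\bar{y}$. Applying $\ad_{(x,v)}$ once more gives first component $\ad_x^{n+1}(y)$ and second component
\begin{center}
$\varphi(x)(u_n)-\varepsilon(\bar{x},n\bar{x}+\bar{y})\varphi(\ad_x^n(y))(v)=\varphi(x)(u_n)-\varepsilon(\bar{x},\bar{y})\varphi(\ad_x^n(y))(v)$.
\end{center}
\begin{itemize}
\item The term $\varphi(x)(u_n)$ contributes $\varphi(x)^{n+1}(w)$ together with $\sum_{k=1}^n(-1)^k\varepsilon(\bar{x},\bar{y})^k\binom{n}{k}\varphi(x)^{n+1-k}\varphi(y)\varphi(x)^{k-1}(v)$.
\item Expand $\varphi(\ad_x^n(y))$ by the previous lemma and shift the index $k\mapsto k-1$. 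The second term then contributes $\sum_{k=1}^{n+1}(-1)^k\varepsilon(\bar{x},\bar{y})^k\binom{n}{k-1}\varphi(x)^{n+1-k}\varphi(y)\varphi(x)^{k-1}(v)$.
\end{itemize}
Adding these with Pascal's rule $\binom{n}{k}+\binom{n}{k-1}=\binom{n+1}{k}$ yields the formula for $n+1$. The only point needing care is the sign and $\varepsilon$-bookkeeping in the second term, namely the factor $\varepsilon(\bar{x},n\bar{x}+\bar{y})$. This is where $\gamma\in G_+$ is used, and I expect it to be the only delicate step.

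\textbf{The case $n=p$.} Since $\binom{p}{k}\equiv 0 \pmod p$ for $1\le k\le p-1$, only the summand $k=p$ survives. As $p$ is odd, $(-1)^p=-1$, so that summand is $-\varepsilon(\bar{x},\bar{y})^p\varphi(y)\varphi(x)^{p-1}(v)$, which is the stated particular case.
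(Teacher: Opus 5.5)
Your proposal is correct and follows essentially the same route as the paper's proof. Both argue by induction on $n$ through the semidirect-product bracket, expand $\varphi(\ad_x^n(y))$ with the preceding lemma, reindex, apply Pascal's rule, and obtain the case $n=p$ from $\binom{p}{k}\equiv 0$ for $1\le k\le p-1$ (your reindexed coefficient $\binom{n}{k-1}$ is the right one; the paper's displayed $\binom{n}{k}$ in that sum is a typo).
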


\begin{proof}
    We perform by induction on $n$. The cases $n=0,1$ are immediate. Assume that the formula is true for $n\geq 1$. Let $(y,w)\in \mathcal{H}(L\oplus V) $. \begin{align*}
        \ad_{(x,v)}^{n+1}&(y,w) =   \ad_{(x,v)}(\ad_{(x,v)}^n(y,w))\\ = & [(x,v), (\ad_x^n(y),\varphi(x)^n(w) + \sum\limits_{k=1}^n(-1)^k\varepsilon(\bar{x},\bar{y})^k\tbinom{n}{k}\varphi(x)^{n-k}\varphi(y)\varphi(x)^{k-1}(v))] \\ = & (\ad_x^{n+1}(y), \varphi(x)^{n+1}(w) + \sum\limits_{k=1}^n(-1)^k\varepsilon(\bar{x},\bar{y})^k\tbinom{n}{k}\varphi(x)^{n+1-k}\varphi(y)\varphi(x)^{k-1}(v)\\ & - \varepsilon(\bar{x},\bar{y}) \varphi(\ad_x^n(y))(v)).
    \end{align*} The second component is \begin{align*}
        & \varphi(x)^{n+1}(w) + \sum\limits_{k=1}^n(-1)^k\varepsilon(\bar{x},\bar{y})^k\tbinom{n}{k}\varphi(x)^{n+1-k}\varphi(y)\varphi(x)^{k-1}(v)\\ & \quad\quad + \sum\limits_{k=1}^{n+1}(-1)^k\varepsilon(\bar{x},\bar{y})^k\tbinom{n}{k}\varphi(x)^{n+1-k}\varphi(y)\varphi(x)^{k-1}(v) \\ & = \varphi(x)^{n+1}(w) + (-1)^{n+1}\varepsilon(\bar{x},\bar{y})^{n+1}\varphi(y)\varphi(x)^n(v) \\ &+ \sum\limits_{k=1}^n(-1)^k\varepsilon(\bar{x},\bar{y})^k\tbinom{n+1}{k}\varphi(x)^{n+1-k}\varphi(y)\varphi(x)^{k-1}(v) \\ & = \varphi(x)^{n+1}(w) + \sum\limits_{k=1}^{n+1}(-1)^k\varepsilon(\bar{x},\bar{y})^k\tbinom{n+1}{k}\varphi(x)^{n+1-k}\varphi(y)\varphi(x)^{k-1}(v).
    \end{align*} The result follows.
\end{proof}

\begin{prop}
    Let $L$ be a restricted Lie color algebra and $(V,\varphi)$ be a restricted representation of $L$. Then for any basis $(x_{i,\gamma},v_{i,\gamma})$ of $(L\oplus V)_\gamma$ there exists a $p$-map given on $(L\oplus V)_\gamma$ by the formula $(x_{i,\gamma},v_{i,\gamma})^{[p]}= (x_{i,\gamma}^{[p]}, \varphi(x_{i,\gamma})^{p-1}(v_{i,\gamma})) $,  for all $\gamma\in G_+ $.
\end{prop}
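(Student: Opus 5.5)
The plan is to apply the color Jacobson Theorem (Theorem~\ref{thm3.5}) to the Lie color algebra $L\oplus V$. Fix $\gamma\in G_+$ and a basis $(x_{i,\gamma},v_{i,\gamma})$ of $(L\oplus V)_\gamma$. For each $i$, define the candidate
$$y_{i,\gamma}:=(x_{i,\gamma}^{[p]},\varphi(x_{i,\gamma})^{p-1}(v_{i,\gamma}))\in (L\oplus V)_{p\gamma}.$$
This element has the correct degree, since $\varphi(x)^{p-1}$ shifts degrees by $(p-1)\gamma$. By Theorem~\ref{thm3.5}, it then suffices to show
$$\ad_{(x_{i,\gamma},v_{i,\gamma})}^p=\ad_{y_{i,\gamma}}$$
on all homogeneous $(y,w)\in L\oplus V$. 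Jacobson's Theorem then gives a unique $p$-map with these prescribed values.

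For the left-hand side we use the case $n=p$ of Lemma~\ref{lem5.9}. Writing $x=x_{i,\gamma}$ and $v=v_{i,\gamma}$, it gives
$$\ad_{(x,v)}^p(y,w)=(\ad_x^p(y),\ \varphi(x)^p(w)-\varepsilon(\bar x,\bar y)^p\varphi(y)\varphi(x)^{p-1}(v)).$$
This rests on $\binom{p}{k}\equiv 0$ for $0<k<p$, together with $(-1)^p=-1$.

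For the right-hand side we expand the bracket in $L\oplus V$:
$$[y_{i,\gamma},(y,w)]=([x^{[p]},y],\ \varphi(x^{[p]})(w)-\varepsilon(p\bar x,\bar y)\varphi(y)\varphi(x)^{p-1}(v)).$$
We then compare the two sides component by component. The first components agree because $L$ is restricted, which gives $[x^{[p]},y]=\ad_x^p(y)$. In the second component, the $w$-terms agree because $\varphi$ is a restricted representation, so $\varphi(x^{[p]})=\varphi(x)^p$. The $v$-terms agree because $\varepsilon$ is a bicharacter, so $\varepsilon(p\bar x,\bar y)=\varepsilon(\bar x,\bar y)^p$. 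Linearity in $(y,w)$ extends the identity from homogeneous elements to all of $L\oplus V$.

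The main obstacle is verifying the $v$-term in Lemma~\ref{lem5.9}, including its sign and $\varepsilon$-power, and confirming that the intermediate binomial terms really vanish mod $p$. The term $k=p$ survives with coefficient $(-1)^p\varepsilon(\bar x,\bar y)^p\varphi(y)\varphi(x)^{p-1}(v)$, and this must be matched exactly against the sign convention $-\varepsilon(\bar y_{i},\bar y)$ in the bracket $[(a,u),(b,w)]=([a,b],\varphi(a)w-\varepsilon(\bar a,\bar b)\varphi(b)u)$.

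A second, minor point concerns the hypotheses of Theorem~\ref{thm3.5}. That theorem needs characteristic greater than $3$, which is assumed throughout. It also needs a basis of every graded piece $(L\oplus V)_\gamma$ with $\gamma\in G_+$, which is exactly what the statement provides.
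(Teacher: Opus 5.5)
Your proposal is correct and is essentially the paper's own argument. You apply the color Jacobson Theorem~\ref{thm3.5} to $L\oplus V$, compute $\ad_{(x,v)}^p(y,w)$ from the case $n=p$ of Lemma~\ref{lem5.9}, and match it componentwise with $[(x^{[p]},\varphi(x)^{p-1}(v)),(y,w)]$ using $[x^{[p]},y]=\ad_x^p(y)$, $\varphi(x^{[p]})=\varphi(x)^p$ and $\varepsilon(p\bar{x},\bar{y})=\varepsilon(\bar{x},\bar{y})^p$, exactly as the paper does (your explicit check that the candidate lies in $(L\oplus V)_{p\gamma}$ is a detail the paper leaves implicit).
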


\begin{proof}
    We will use  \hyperref[thm3.5]{Theorem ~\ref*{thm3.5}}. Let $\gamma\in G_+, (x,v)\in (L\oplus V)_\gamma, (y,w)\in \mathcal{H}(L\oplus V) $. We have \begin{align*}
        \ad_{(x,v)}^p(y,w)-[(x,v)^{[p]}, (y,w)] =& (\ad_x^p(y), \varphi(x)^p(w) - \varepsilon(\bar{x},\bar{y})^p\varphi(y)\varphi(x)^{p-1}(v)) \\ & -[(x^{[p]},\varphi(x)^{p-1}(v)), (y,w)]\\  =& (\ad_{x^{[p]}}(y),  \varphi(x)^p(w) - \varepsilon(\bar{x},\bar{y})^p\varphi(y)\varphi(x)^{p-1}(v))\\ & - (\ad_{x^{[p]}}(y),  \varphi(x^{[p]})(w) - \varepsilon(\bar{x},\bar{y})^p\varphi(y)\varphi(x)^{p-1}(v)) \\ = & 0,
    \end{align*} by \hyperref[lem5.9]{Lemma ~\ref*{lem5.9}}. Therefore, the result follows from Jacobson's Theorem.
\end{proof}

\begin{rmq}
    If the center of $L\oplus V $ is trivial, then the map defined by \begin{center}
    $\begin{array}{lrcl}
(\cdot)^{[p]} : & L_\gamma\oplus V_\gamma & \longrightarrow & L_{p\gamma}\oplus V_{p\gamma} \\
    & (x,v) & \longmapsto & (x^{[p]},\varphi(x)^{p-1}(v))\end{array}$ \end{center} is a $p$-map.
\end{rmq}

\subsection{Representations of restricted Lie color triple systems}
In this section, we introduce the notion of a representation of a restricted Lie color triple system. Moreover, we provide some properties and examples.
\begin{defi}\label{d5.11}
    Let $T$ be a restricted Lie color triple system, and $(\theta,V)$ be a representation of $T$. The pair $(V,\theta)$ is called a restricted representation if, in addition, we have
    \begin{center}
    $\theta(x^{[p]},y)=\varepsilon(\bar{x},\bar{y})^{p-1}\theta(x,y) \circ \theta(x,x)^{\frac{p-1}{2}} $ 
    \end{center} for all $\gamma\in G_+, x\in T_\gamma, y\in \mathcal{H}(T) $ and
    \begin{center} 
    $\theta(x,y^{[p]})=\varepsilon(\bar{x},\bar{y})^{p-1} \theta(y,y)^{\frac{p-1}{2}}\circ\theta(x,y) $
    \end{center} for all $\gamma\in G_+, x\in \mathcal{H}(T), y\in T_\gamma  $.
\end{defi}

\begin{ex}
\begin{enumerate}
    \item  The pair $(T,\theta)$ with $\theta(x,y)(z)=\varepsilon(\bar{x}+\bar{y},\bar{z})[z,x,y] $ is a restricted representation. It is called the adjoint representation.

   \item We compute  $1$-dimensional restricted representations of the restricted Lie color triple system $L=L_{(1,1,0)}\oplus L_{(1,0,1)} \oplus L_{(0,1,1)} $, defined in Example \ref{Example1}. We have seen that the $p$-map of $L$ is the identity on the basis and that the representations of $L$ are the bilinear maps $\theta:T\times T\to \mathrm{End}(V) $ defined on the basis by \begin{center}
          $\theta(e_i,e_i)(v)= \alpha v  $   and $\theta(e_i,e_j)(v)=0 $  for $i\neq j \in \left\{1,2,3 \right\}$, with $\alpha=0,1 $. 
    \end{center}To turn it into a restricted representation it is necessary and sufficient that $\alpha= \alpha^{\frac{p+1}{2}}, $ which is the case.\\ If $(V,\theta) $ is a representation of dimension $2$ such that $V=\mathbf{K}v_1\oplus \mathbf{K}v_2 $ with $(\bar{v_1},\bar{v_2})=(1,2) $, then there exist two scalars $\alpha=0,1$ and $\beta=0,1 $ such that $\theta(e_i,e_j)=0 $ if $i\neq j$ and $\theta(e_i,e_i)(v_1)=\alpha v_1 $, $\theta(e_i,e_i)(v_2)=\beta v_2 $. In order two make this a restricted representation, it is necessary and sufficient that $\alpha,\beta $ verify $\alpha= \alpha^{\frac{p+1}{2}}, \beta= \beta^{\frac{p+1}{2}},  $ which is the case.
    \end{enumerate}
\end{ex}

We have the following result:

\begin{prop}
    Let $L$ be a restricted Lie color algebra and let $\rho: L\to \mathrm{End}(V)$ be a restricted representation of $L$. The map $\varphi$ induces a restricted representation of $L_{\mathrm{trip}} $ by setting \begin{center}
        $\theta(x,y)=\varepsilon(\bar{x},\bar{y})\varphi(y)\varphi(x) $ for all $x,y\in \mathcal{H}(T)$.\end{center}
\end{prop}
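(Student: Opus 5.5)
The plan is to reduce everything to the earlier (non-restricted) proposition and then check the two extra identities of Definition~\ref{d5.11} by direct substitution. Throughout, I identify $\rho$ with $\varphi$; the statement uses both names for the same map. By the proposition showing that a representation $\varphi$ of $L$ induces a representation of $L_{\mathrm{trip}}$ via $\theta(x,y)=\varepsilon(\bar{x},\bar{y})\varphi(y)\varphi(x)$, the pair $(V,\theta)$ is already a representation of the Lie color triple system $L_{\mathrm{trip}}$. Moreover, $L_{\mathrm{trip}}$ is a restricted Lie color triple system with the same $p$-map as $L$, by the first item of the example following Definition~\ref{d4.1}. So only the two restrictedness conditions remain to be verified.

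The key inputs are the following.
\begin{itemize}
\item Since $\varphi$ is a restricted representation, $\varphi(x^{[p]})=\varphi(x)^p$ for $x\in L_\gamma$ with $\gamma\in G_+$.
\item The degree of $x^{[p]}$ is $p\bar{x}$, so bimultiplicativity gives $\varepsilon(\overline{x^{[p]}},\bar{y})=\varepsilon(\bar{x},\bar{y})^p$.
\item For $\bar{x}\in G_+$ we have $\varepsilon(\bar{x},\bar{x})=1$. Hence $\theta(x,x)=\varphi(x)^2$, and therefore $\theta(x,x)^{\frac{p-1}{2}}=\varphi(x)^{p-1}$; this uses that $p$ is odd.
\end{itemize}

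For the first identity, take $x\in L_\gamma$ with $\gamma\in G_+$ and $y$ homogeneous. The left-hand side is
\[
\theta(x^{[p]},y)=\varepsilon(\bar{x},\bar{y})^p\varphi(y)\varphi(x)^p .
\]
The right-hand side is
\[
\varepsilon(\bar{x},\bar{y})^{p-1}\,\varepsilon(\bar{x},\bar{y})\,\varphi(y)\varphi(x)\,\varphi(x)^{p-1},
\]
which is the same expression.

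For the second identity, take $y\in L_\gamma$ with $\gamma\in G_+$ and $x$ homogeneous. The left-hand side is
\[
\theta(x,y^{[p]})=\varepsilon(\bar{x},\bar{y})^p\varphi(y)^p\varphi(x).
\]
The right-hand side is
\[
\varepsilon(\bar{x},\bar{y})^{p-1}\,\varphi(y)^{p-1}\,\varepsilon(\bar{x},\bar{y})\,\varphi(y)\varphi(x),
\]
which again agrees.

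There is no real obstacle in this argument. The only points needing care are the bookkeeping of the bicharacter under the degree change $\bar{x}\mapsto p\bar{x}$, and the observation that $\varepsilon(\bar{x},\bar{x})=1$ on $G_+$. The latter is what makes $\theta(x,x)$ equal to an honest square $\varphi(x)^2$, so that its $\frac{p-1}{2}$-th power is $\varphi(x)^{p-1}$.
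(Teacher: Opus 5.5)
Your proof is correct and matches the paper's: both rely on the earlier non-restricted result and then check the two identities of Definition~\ref{d5.11} by substituting $\varphi(x^{[p]})=\varphi(x)^p$, $\varepsilon(p\bar{x},\bar{y})=\varepsilon(\bar{x},\bar{y})^p$ and $\theta(x,x)=\varphi(x)^2$ (valid because $\varepsilon(\bar{x},\bar{x})=1$ on $G_+$). Your computation gives the factor $\varepsilon(\bar{x},\bar{y})^{p-1}$ that the definition requires, whereas the last line of the paper's proof writes the exponent as $\frac{p-1}{2}$, which looks like a typo.
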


\begin{proof}
    Let $\gamma\in G_+ $, $x\in L_\gamma$ and $ y\in \mathcal{H}(L) $. We have \begin{align*}
        \theta(x^{[p]},y)=& \varepsilon(p\bar{x},\bar{y})\rho(y)\rho(x^{[p]})=\varepsilon(p\bar{x},\bar{y})\rho(y)\rho(x)^p \\ = & \varepsilon(\bar{x},\bar{y})^{p-1}\varepsilon(\bar{x},\bar{y})\rho(y)\rho(x)(\rho(x)\rho(x))^{\frac{p-1}{2}}=\varepsilon(x,y)^{\frac{p-1}{2}}\theta(x,y)\theta(x,x)^{\frac{p-1}{2}}. \end{align*} Similarly for all $x\in \mathcal{H}(L) \text{ and } y\in L_\gamma$, we have  $$\theta(x,y^{[p]})= \varepsilon(\bar{x},p\bar{y})\rho(y)^p\rho(x)=\varepsilon(\bar{x},\bar{y})^{\frac{p-1}{2}}\theta(y,y)^{\frac{p-1}{2}}\theta(x,y) .$$
\end{proof}

%\begin{prop}
  %  Let $T$ be a restricted Lie color triple system with $Z(T)=0$. Let $\theta$ be a representation $T$ on $V$  induced by a representation $\varphi $ of $L_D(T) $ on $V$. Assume the operation $[\cdot,\cdot,\cdot]_V: (T\oplus V)\times (T\oplus V)\times (T\oplus V)\to T\oplus V  $ by \begin{center}
     %   $[(x,a),(y,b),(z,c)]_V=([x,y,z], %\varepsilon(\bar{x},\bar{y}+\bar{z})\theta(y,z)(a)-\varepsilon(\bar{y},\bar{z})\theta(x,z)(b)+ D(x,y)(c)) $,
    %\end{center} then $T\oplus V$ with the $p$-map definded by \begin{center}
     %   $\begin{array}{lrcl}
%[p] : & T_+\oplus V_+ & \longrightarrow & T_+\oplus V_+ \\
 %   & (x,v) & \longmapsto & (x^{[p]},\theta(x,x)^{\frac{p-1}{2}}(v))\end{array}$
  %  \end{center} is a restricted Lie color triple system.
%\end{prop}

%\begin{proof}
 %   We can endow $L_D(T) $ with a restricted structure since the center is trivial. Then, $L_D(T)\oplus V $ is restricted, and $T\oplus V $ is stable under the $p$-map of $L_D(T)\oplus V $ (it is stable on the basis of $ T\oplus V$ and then it is stable on all $T\oplus V$ because of the formula of $((x,u)+(y,v))^{[p]} $ involving the term $s_i$ which are in $T\oplus V$) so $T\oplus V$ is restricted.
%\end{proof}

Now, let $(V,\theta) $ be a restricted representation of a restricted Lie color triple system $T$. We will prove that we can endow the semi-direct product $T\oplus V $ with a restricted structure. We provide some technical results in order to use \hyperref[thm4.16]{Theorem ~\ref*{thm4.16}}. The following propositions will be used in order to prove \hyperref[thm5.18]{Theorem ~\ref*{thm5.18}}. 

\begin{prop}\label{p5.14}
    Let $T$ be a Lie color triple system. Let  $\gamma\in G_+ ,x\in T_\gamma, y\in \mathcal{H}(T) $ and $\theta$ be a representation of $T$. Then we have \begin{align*}
      &  \theta(x,[[[\dots[y,x,x],x,x]\dots],x,x])=\\ & \sum\limits_{k=0}^n\tbinom{2n}{2k}\varepsilon(\bar{y},\bar{x})^{2(n-k)}\theta(x,x)^{n-k}\theta(x,y)\theta(x,x)^k - \sum\limits_{k=0}^{n-1}\tbinom{2n}{2k+1}\varepsilon(\bar{y},\bar{x})^{2k-1}\theta(x,x)^k\theta(y,x)\theta(x,x)^{n-k} ,
    \end{align*} and \begin{align*}
       & \theta([[\dots[y,x,x]\dots],x,x],x)= \\& \sum\limits_{k=0}^n\tbinom{2n}{2k}\varepsilon(\bar{y},\bar{x})^{2k}\theta(x,x)^k\theta(y,x)\theta(x,x)^{n-k} - \sum\limits_{k=0}^{n-1}\tbinom{2n}{2k+1}\varepsilon(\bar{y},\bar{x})^{2(n-k)+1}\theta(x,x)^{n-k}\theta(x,y)\theta(x,x)^k ,
    \end{align*} where $x$ occurs $2n$ times in the brackets.
\end{prop}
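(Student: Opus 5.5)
Write $y_n=[[\dots[y,x,x]\dots],x,x]$ with $x$ occurring $2n$ times, so that $y_0=y$ and $y_{n+1}=[y_n,x,x]$. Since $x\in T_\gamma$ with $\gamma\in G_+$, we have $\varepsilon(\bar{x},\bar{x})=1$. Moreover $\bar{y_n}=\bar{y}+2n\bar{x}$, so $\varepsilon(\bar{y_n},\bar{x})=\varepsilon(\bar{y},\bar{x})\varepsilon(\bar{x},\bar{x})^{2n}=\varepsilon(\bar{y},\bar{x})$. I would prove the two formulas together by simultaneous induction on $n$. They are coupled: each recursion step expresses $\theta(x,y_{n+1})$ and $\theta(y_{n+1},x)$ through $\theta(x,y_n)$ and $\theta(y_n,x)$. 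The case $n=0$ is trivial, since the first formula reduces to $\theta(x,y)$ and the second to $\theta(y,x)$.

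\textbf{Step 1: recursion for $\theta(x,[y,x,x])$.} Take Condition \eqref{rep2} with the quadruple $(x,y,z,t)$ replaced by $(x,u,x,x)$, where $u=y_n$. The term $\theta(x,[u,x,x])$ appears there together with products $\theta(x,x)\theta(x,u)$, $\theta(u,x)\theta(x,x)$ and $D(u,x)\theta(x,x)$. Expanding $D(u,x)=\varepsilon(\bar{u},\bar{x})\theta(x,u)-\theta(u,x)$ and simplifying the bicharacters with $\varepsilon(\bar{x},\bar{x})=1$ should give
\[
\theta(x,[u,x,x])=\theta(x,x)\theta(x,u)+\varepsilon(\cdot)\,\theta(x,u)\theta(x,x)-\varepsilon(\cdot)\,\theta(u,x)\theta(x,x)-\dots
\]
I expect it to have the shape of a ``second-order commutator'', that is, $\theta(x,x)\theta(x,u)-2\,(\text{cross terms})+\theta(x,u)\theta(x,x)$, mixing $\theta(x,u)$ and $\theta(u,x)$. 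This is what produces the even/odd binomial split in the statement.

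\textbf{Step 2: recursion for $\theta([y,x,x],x)$.} Use Condition \eqref{rep3} with $(x,y,z,t)\mapsto(u,x,x,x)$. This contains $\theta([u,x,x],x)$ and $\theta(x,[u,x,x])$. Alternatively take $(x,u,x,x)$, which yields $\theta([x,u,x],x)$, and then rewrite that via \eqref{lts2}. Substitute the result of Step 1 to isolate $\theta([u,x,x],x)$ as a combination of $\theta(x,x)^a\theta(u,x)\theta(x,x)^b$ and $\theta(x,x)^a\theta(x,u)\theta(x,x)^b$ with $a+b=1$. A cross-check is available when $\theta$ is induced by a Lie color algebra representation, where $\theta(a,b)=\varepsilon(\bar a,\bar b)\varphi(b)\varphi(a)$; there both recursions follow from the earlier lemma on $\varphi(\ad_x^n(y))$, which fixes the exact coefficients.

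\textbf{Step 3: induction.} Apply Steps 1 and 2 with $u=y_n$ and plug in the induction hypothesis. Left multiplication and right multiplication by $\theta(x,x)$ shift the exponents, so the coefficients combine by Pascal's rule applied twice, $\binom{2n}{j}+2\binom{2n}{j-1}+\binom{2n}{j-2}=\binom{2n+2}{j}$. The even indices $j=2k$ attach to one type of monomial and the odd indices $j=2k+1$ to the other. The $\varepsilon(\bar y,\bar x)$ powers are tracked using the invariance of $\varepsilon(\bar{y_n},\bar{x})$ noted above.

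The main obstacle is Step 1/2: extracting from \eqref{rep2} and \eqref{rep3} clean recursions with the correct bicharacter factors. In particular the exponents such as $2k-1$ versus $2(n-k)+1$ depend on sign conventions, and I would calibrate them against the induced-representation case before running the induction.
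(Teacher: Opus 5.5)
Your plan is correct and matches the paper's proof: a simultaneous induction on $n$ in which \eqref{rep2} at $(x,y_n,x,x)$ gives $\theta(x,y_{n+1})=\varepsilon(\bar{y},\bar{x})^2\theta(x,x)\theta(x,y_n)-2\varepsilon(\bar{x},\bar{y})\theta(y_n,x)\theta(x,x)+\theta(x,y_n)\theta(x,x)$, \eqref{rep3} at $(y_n,x,x,x)$ then gives $\theta(y_{n+1},x)$, and $\binom{2n}{j}+2\binom{2n}{j-1}+\binom{2n}{j-2}=\binom{2n+2}{j}$ closes the induction. The only difference is cosmetic: you substitute the Step 1 recursion into \eqref{rep3}, which gives the clean relation $\theta(y_{n+1},x)=\varepsilon(\bar{y},\bar{x})^2\theta(x,x)\theta(y_n,x)-2\varepsilon(\bar{y},\bar{x})^3\theta(x,x)\theta(x,y_n)+\theta(y_n,x)\theta(x,x)$, while the paper inserts the already-proved closed form of the first formula at level $n+1$; in both cases the coefficients come directly from \eqref{rep2} and \eqref{rep3}, so the induced-representation case is only a sanity check and not a way of fixing them.
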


\begin{proof}
   We perform by induction on $n$. The initialization is clear. Assume that the formulas are true for $n\geq 1$.

 By definition of a representation, we have \begin{align*}
     &\theta(x,[[\dots[y,x,x],x,x]\dots],x,x])= \varepsilon(\bar{y},\bar{x})^2\theta(x,x)\theta(x,[\dots[[y,x,x]\dots],x,x]) 
     \\& 
-2\varepsilon(\bar{x},\bar{y})\theta([\dots[[y,x,x]\dots],x,x],x)\theta(x,x)
+\theta(x,[\dots[[y,x,x]\dots],x,x])\theta(x,x) .
 \end{align*} Thus, we have 
    \begin{align*}
     &\theta(x,[[\dots[y,x,x],x,x]\dots],x,x]) = \sum\limits_{k=0}^n\tbinom{2n}{2k}\varepsilon(\bar{y},\bar{x})^{2(n-k)+2}\theta(x,x)^{n+1-k}\theta(x,y)\theta(x,x)^k \\ & - \sum\limits_{k=0}^{n-1}\tbinom{2n}{2k+1}\varepsilon(\bar{y},\bar{x})^{2k+1}\theta(x,x)^{k+1}\theta(y,x)\theta(x,x)^{n-k} 
     \\ &
     - 2\sum\limits_{k=0}^n\tbinom{2n}{2k}\varepsilon(\bar{y},\bar{x})^{2k-1}\theta(x,x)^k\theta(y,x)\theta(x,x)^{n+1-k} \\ & +2 \sum\limits_{k=0}^{n-1}\tbinom{2n}{2k+1}\varepsilon(\bar{y},\bar{x})^{2(n-k)}\theta(x,x)^{n-k}\theta(x,y)\theta(x,x)^{k+1} 
     \\ &
     + \sum\limits_{k=0}^n\tbinom{2n}{2k}\varepsilon(\bar{y},\bar{x})^{2(n-k)}\theta(x,x)^{n-k}\theta(x,y)\theta(x,x)^{k+1} \\ &- \sum\limits_{k=0}^{n-1}\tbinom{2n}{2k+1}\varepsilon(\bar{y},\bar{x})^{2k-1}\theta(x,x)^k\theta(y,x)\theta(x,x)^{n+1-k} \\ & = \sum\limits_{k=0}^n\tbinom{2n}{2k}\varepsilon(\bar{y},\bar{x})^{2(n-k)+2}\theta(x,x)^{n+1-k}\theta(x,y)\theta(x,x)^k  
     \\ &
     + 2 \sum\limits_{k=1}^{n}\tbinom{2n}{2k-1}\varepsilon(\bar{y},\bar{x})^{2(n+1-k)}\theta(x,x)^{n+1-k}\theta(x,y)\theta(x,x)^{k} \\ & +\sum\limits_{k=1}^{n+1}\tbinom{2n}{2k-2}\varepsilon(\bar{y},\bar{x})^{2(n+1-k)}\theta(x,x)^{n+1-k}\theta(x,y)\theta(x,x)^{k} \\ & - \sum\limits_{k=1}^{n}\tbinom{2n}{2k-1}\varepsilon(\bar{y},\bar{x})^{2k-1}\theta(x,x)^{k}\theta(y,x)\theta(x,x)^{n+1-k} \\ & -2\sum\limits_{k=0}^n\tbinom{2n}{2k}\varepsilon(\bar{y},\bar{x})^{2k-1}\theta(x,x)^k\theta(y,x)\theta(x,x)^{n+1-k}\\ & -  \sum\limits_{k=0}^{n-1}\tbinom{2n}{2k+1}\varepsilon(\bar{y},\bar{x})^{2k-1}\theta(x,x)^k\theta(y,x)\theta(x,x)^{n+1-k} \\  & =  \sum\limits_{k=1}^{n}( \tbinom{2n}{2k}+2\tbinom{2n}{2k-1}+ \tbinom{2n}{2k-2})\varepsilon(\bar{y},\bar{x})^{2(n+1-k)}\theta(x,x)^{n+1-k}\theta(x,y)\theta(x,x)^{k} \\ &+\varepsilon(\bar{y},\bar{x})^{2(n+1)}\theta(x,x)^{n+1}\theta(x,y)+\theta(x,y)\theta(x,x)^{n+1} \\ &  - \sum\limits_{k=1}^{n-1}(\tbinom{2n}{2k-1} +2 \tbinom{2n}{2k} + \tbinom{2n}{2k+1})\varepsilon(\bar{y},\bar{x})^{2k-1}\theta(x,x)^{k}\theta(y,x)\theta(x,x)^{n+1-k} \\ &- (\tbinom{2n}{1}+2)\varepsilon(\bar{x},\bar{y})\theta(y,x)\theta(x,x)^{n+1} - (2+\tbinom{2n}{2n-1})\varepsilon(\bar{y},\bar{x})^{2n-1}\theta(x,x)^n\theta(y,x)\theta(x,x) \\  &= \sum\limits_{k=0}^{n+1}\tbinom{2(n+1)}{2k}\varepsilon(\bar{y},\bar{x})^{2(n+1-k)}\theta(x,x)^{n+1-k}\theta(x,y)\theta(x,x)^k \\ & - \sum\limits_{k=0}^{n}\tbinom{2(n+1)}{2k+1}\varepsilon(\bar{y},\bar{x})^{2k-1}\theta(x,x)^k\theta(y,x)\theta(x,x)^{n+1-k} . 
    \end{align*} 
    Using  Conditions \eqref{rep2} and \eqref{rep3} of  \hyperref[d5.2]{Definition ~\ref*{d5.2}}, we have \begin{align*}
        \theta(&[[\dots[y,x,x],x,x]\dots],x,x],x) = -\varepsilon(\bar{y},\bar{x})^3\theta(x,x)\theta(x,[[\dots[y,x,x]\dots],x,x]) \\ &+ \varepsilon(\bar{y},\bar{x})^2\theta(x,x)\theta([[\dots[y,x,x]\dots],x,x],x)+ \varepsilon(\bar{y},\bar{x})\theta(x,[[\dots[y,x,x]\dots],x,x])\theta(x,x) \\ & - \theta([[\dots[y,x,x]\dots],x,x],x)\theta(x,x) - \varepsilon(\bar{y},\bar{x})\theta(x,[[\dots[y,x,x],x,x]\dots],x,x]).
    \end{align*} So \begin{align*}
        \theta(&[[\dots[y,x,x],x,x]\dots],x,x],x) = \\ & -\sum\limits_{k=0}^n\tbinom{2n}{2k}\varepsilon(\bar{y},\bar{x})^{2(n-k)+3}\theta(x,x)^{n+1-k}\theta(x,y)\theta(x,x)^k  \\ &+ \sum\limits_{k=0}^{n-1}\tbinom{2n}{2k+1}\varepsilon(\bar{y},\bar{x})^{2k+2}\theta(x,x)^{k+1}\theta(y,x)\theta(x,x)^{n-k} \\ &+\sum\limits_{k=0}^n\tbinom{2n}{2k}\varepsilon(\bar{y},\bar{x})^{2k+2}\theta(x,x)^{k+1}\theta(y,x)\theta(x,x)^{n-k} \\ &  - \sum\limits_{k=0}^{n-1}\tbinom{2n}{2k+1}\varepsilon(\bar{y},\bar{x})^{2(n+1-k)+1}\theta(x,x)^{n+1-k}\theta(x,y)\theta(x,x)^k \\ & + \sum\limits_{k=0}^n\tbinom{2n}{2k}\varepsilon(\bar{y},\bar{x})^{2(n-k)+1}\theta(x,x)^{n-k}\theta(x,y)\theta(x,x)^{k+1}\\ & - \sum\limits_{k=0}^{n-1}\tbinom{2n}{2k+1}\varepsilon(\bar{y},\bar{x})^{2k}\theta(x,x)^k\theta(y,x)\theta(x,x)^{n+1-k} \\ & - \sum\limits_{k=0}^n\tbinom{2n}{2k}\varepsilon(\bar{y},\bar{x})^{2k}\theta(x,x)^k\theta(y,x)\theta(x,x)^{n+1-k}  \\ &+ \sum\limits_{k=0}^{n-1}\tbinom{2n}{2k+1}\varepsilon(\bar{y},\bar{x})^{2(n-k)+1}\theta(x,x)^{n-k}\theta(x,y)\theta(x,x)^{k+1} \\ & - \sum\limits_{k=0}^{n+1}\tbinom{2(n+1)}{2k}\varepsilon(\bar{y},\bar{x})^{2(n+1-k)+1}\theta(x,x)^{n+1-k}\theta(x,y)\theta(x,x)^k \\ &+ \sum\limits_{k=0}^{n}\tbinom{2(n+1)}{2k+1}\varepsilon(\bar{y},\bar{x})^{2k}\theta(x,x)^k\theta(y,x)\theta(x,x)^{n+1-k} \\ &= \sum\limits_{k=1}^{n-1}C\varepsilon(\bar{y},\bar{x})^{2(n+1-k)}\theta(x,x)^{n+1-k}\theta(x,y)\theta(x,x)^k \\ & -(\tbinom{2n}{1}+2)\varepsilon(\bar{y},\bar{x})^{2(n+1)+1}\theta(x,x)^{n+1}\theta(x,y) \\ &+(-1+\tbinom{2n}{2n-2} -\tbinom{2n+2}{2n}+\tbinom{2n}{2n-1}) \varepsilon(\bar{y},\bar{x})^3\theta(x,x)\theta(x,y)\theta(x,x)^n \\  &+ \sum\limits_{k=1}^{n-1}C'\varepsilon(\bar{y},\bar{x})^{2k}\theta(x,x)^k\theta(y,x)\theta(x,x)^{n+1-k} \\ & + (\tbinom{2n}{2n-1}+ \tbinom{2n}{2n-2} + \tbinom{2n+2}{2n+1}-1)\varepsilon(\bar{y},\bar{x})^{2n}\theta(x,x)^n\theta(y,x)\theta(x,x) \\ &+ \varepsilon(\bar{y},\bar{x})^{2(n+1)}\theta(x,x)^{n+1}\theta(y,x) - \theta(y,x)\theta(x,x)^{n+1} \\ & = \sum\limits_{k=0}^{n+1}\tbinom{2(n+1)}{2k}\varepsilon(\bar{y},\bar{x})^{2k}\theta(x,x)^k\theta(y,x)\theta(x,x)^{n+1-k} \\ &  - \sum\limits_{k=0}^{n}\tbinom{2(n+1)}{2k+1}\varepsilon(\bar{y},\bar{x})^{2(n+1-k)+1}\theta(x,x)^{n+1-k}\theta(x,y)\theta(x,x)^k, 
    \end{align*} where $$C=(\tbinom{2n}{2k-1}+\tbinom{2n}{2k-2}-\tbinom{2n}{2k}-\tbinom{2n}{2k+1}-\tbinom{2(n+1)}{2k})= \tbinom{2(n+1)}{2k} $$ and $$C'= (\tbinom{2n}{2k-1} +\tbinom{2n}{2k-2}-\tbinom{2n}{2k+1} -\tbinom{2n}{2k}+ \tbinom{2(n+1)}{2k+1})= \tbinom{2(n+1)}{2k+1}. $$ Then the result follows.
\end{proof}

\begin{prop}\label{p5.15}
    Let $T$ be a Lie color triple system, $(y,u)\in \mathcal{H}(T\oplus V)$ and $(x,v)\in \mathcal{H}((T\oplus V)_+) $. Then we have \begin{align*}
        &((y,u),(x,v),\dots,(x,v))=\\ & ((y,x,\dots,x), \varepsilon(\bar{y},\bar{x})^{2n}\theta(x,x)^n(u)- \sum\limits_{k=0}^{n-1}\tbinom{2n}{2k+1}\varepsilon(\bar{y},\bar{x})^{2k}\theta(x,x)^k\theta(y,x)\theta(x,x)^{n-1-k}(v) \\&+ \sum\limits_{k=0}^{n-1}\tbinom{2n}{2(k+1)}\varepsilon(\bar{y},\bar{x})^{2(n-k)-1}\theta(x,x)^{n-k-1}\theta(x,y)\theta(x,x)^k(v) )  ,
    \end{align*} where $x$ occurs $2n$ times in the bracket.
\end{prop}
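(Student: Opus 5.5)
We argue by induction on $n$. Write $X=(x,v)$, $Y=(y,u)$, and let $Y_n$ denote the bracket $(Y,X,\dots,X)$ with $2n$ copies of $X$, so that $Y_{n+1}=[Y_n,X,X]_V$. Since $\bar{X}=\bar{x}\in G_+$, both $X$ and $x$ are homogeneous of degree $\bar{x}$. The first component of $Y_n$ is $(y,x,\dots,x)$, because the first component of $[\cdot,\cdot,\cdot]_V$ is the bracket of $T$. It remains to track the $V$-component, which we call $w_n$.

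For $n=1$, the definition of $[\cdot,\cdot,\cdot]_V$ gives
\begin{align*}
w_1=\varepsilon(\bar{y},2\bar{x})\theta(x,x)(u)-\varepsilon(\bar{x},\bar{x})\theta(y,x)(v)+D(y,x)(v).
\end{align*}
Now expand $D(y,x)=\varepsilon(\bar{y},\bar{x})\theta(x,y)-\theta(y,x)$ and use $\varepsilon(\bar{x},\bar{x})=1$. This gives
\begin{align*}
w_1=\varepsilon(\bar{y},\bar{x})^2\theta(x,x)(u)-2\theta(y,x)(v)+\varepsilon(\bar{y},\bar{x})\theta(x,y)(v),
\end{align*}
which is the claimed formula with $\binom{2}{1}=2$ and $\binom{2}{2}=1$.

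For the induction step, set $y'=(y,x,\dots,x)$ with $2n$ copies of $x$. Its degree is $\bar{y}+2n\bar{x}$, so $\varepsilon(\bar{y'},\bar{x})=\varepsilon(\bar{y},\bar{x})\varepsilon(\bar{x},\bar{x})^{2n}=\varepsilon(\bar{y},\bar{x})$. Applying the base computation with $(y',w_n)$ in place of $(y,u)$ gives
\begin{align*}
w_{n+1}=\varepsilon(\bar{y},\bar{x})^2\theta(x,x)(w_n)-2\theta(y',x)(v)+\varepsilon(\bar{y},\bar{x})\theta(x,y')(v).
\end{align*}
We then insert three pieces: the inductive formula for $w_n$, the second formula of Proposition~\ref{p5.14} for $\theta(y',x)$, and the first formula of Proposition~\ref{p5.14} for $\theta(x,y')$.

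The $u$-term is immediate: it becomes $\varepsilon(\bar{y},\bar{x})^{2(n+1)}\theta(x,x)^{n+1}(u)$. The $v$-terms split into words of two types, $\theta(x,x)^a\theta(y,x)\theta(x,x)^b$ and $\theta(x,x)^a\theta(x,y)\theta(x,x)^b$ with $a+b=n$. For each type we collect coefficients. Each word receives one contribution from $\theta(x,x)w_n$ (this shifts $a$ by one), one contribution with weight $-2$ from Proposition~\ref{p5.14}'s formula for $\theta(y',x)$, and one contribution from the formula for $\theta(x,y')$. The binomials should then combine via the identity
\begin{align*}
\tbinom{2n}{j-2}+2\tbinom{2n}{j-1}+\tbinom{2n}{j}=\tbinom{2n+2}{j},
\end{align*}
possibly after a sign cancellation, into $\binom{2n+2}{2k+1}$ and $\binom{2n+2}{2k+2}$ respectively.

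The main obstacle is the bookkeeping of the $\varepsilon$-powers. Proposition~\ref{p5.14} carries exponents such as $2k-1$ and $2(n-k)+1$, and these must match the target exponents $2k$ and $2(n-k)-1$. To reconcile them we reduce everything using $\varepsilon(\bar{y},\bar{x})\varepsilon(\bar{x},\bar{y})=1$, keeping track of which exponents are even. We also need to handle the boundary indices $k=0$ and $k=n$ separately, as was done in the proof of Proposition~\ref{p5.14}. If the signs do not align directly, the fallback is to replace $\theta(x,y')$ and $\theta(y',x)$ using Conditions \eqref{rep2} and \eqref{rep3}, which lets us run a two-term recursion on $w_n$ alone rather than substituting Proposition~\ref{p5.14} in closed form.
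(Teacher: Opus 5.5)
Your proposal is correct and is essentially the paper's proof: the same recursion $w_{n+1}=\varepsilon(\bar{y},\bar{x})^2\theta(x,x)(w_n)-2\theta(y',x)(v)+\varepsilon(\bar{y},\bar{x})\theta(x,y')(v)$, then substituting the induction hypothesis and both formulas of Proposition~\ref{p5.14}, and collecting coefficients via $\binom{2n}{j-2}+2\binom{2n}{j-1}+\binom{2n}{j}=\binom{2n+2}{j}$. The $\varepsilon$-bookkeeping you flag closes directly, with no sign cancellation, parity argument or fallback needed: once the exponent $2k-1$ in the first formula of Proposition~\ref{p5.14} is multiplied by the prefactor $\varepsilon(\bar{y},\bar{x})$, every contribution to the word $\theta(x,x)^k\theta(y,x)\theta(x,x)^{n-k}$ carries $\varepsilon(\bar{y},\bar{x})^{2k}$, and every contribution to $\theta(x,x)^{n-k}\theta(x,y)\theta(x,x)^k$ carries $\varepsilon(\bar{y},\bar{x})^{2(n-k)+1}$, which are exactly the target exponents for $n+1$.
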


\begin{proof}
   We perform by induction on $n$. The initialization is clear. Assume that the formula holds for $n\geq 1$. We have, by definition of the bracket in $T\oplus V$, that \begin{align*}
        [\dots[[[(y,u),(x,v),(x,v)]&,(x,v),(x,v)]\dots](x,v),(x,v)]=((y,x,\dots,x), \varepsilon(\bar{y},\bar{x})^2\theta(x,x)(a) \\ &-2\theta([[\dots[y,x,x]\dots]x,x],x)(v)+\varepsilon(\bar{y},\bar{x})\theta(x,[\dots[[y,x,x]\dots],x,x])(v)),
    \end{align*} where in left hand side $(x,v)$ occurs $2(n+1)$ times and $a$ is the second component of  $((y,u),(x,v),\dots,(x,v))$, where $(x,v)$ occurs $2n$ times.\\ So, the second component of $ [\dots[[[(y,u),(x,v),(x,v)],(x,v),(x,v)]\dots](x,v),(x,v)] $ is \begin{align*}
        \varepsilon(\bar{y},\bar{x})^2&\theta(x,x)(\varepsilon(\bar{y},\bar{x})^{2n}\theta(x,x)^n(u)- \sum\limits_{k=0}^{n-1}\tbinom{2n}{2k+1}\varepsilon(\bar{y},\bar{x})^{2k}\theta(x,x)^k\theta(y,x)\theta(x,x)^{n-1-k}(v)  \\ & +\sum\limits_{k=0}^{n-1}\tbinom{2n}{2(k+1)}\varepsilon(\bar{y},\bar{x})^{2(n-k)-1}\theta(x,x)^{n-k-1}\theta(x,y)\theta(x,x)^k(v)) \\& -2\theta([[\dots[y,x,x]\dots]x,x],x)(v) +\varepsilon(\bar{y},\bar{x})\theta(x,[\dots[[y,x,x]\dots],x,x])(v)\\ & = \varepsilon(\bar{y},\bar{x})^{2(n+1)}\theta(x,x)^{n+1}(u)  - \sum\limits_{k=0}^{n-1}\tbinom{2n}{2k+1}\varepsilon(\bar{y},\bar{x})^{2k+2}\theta(x,x)^{k+1}\theta(y,x)\theta(x,x)^{n-1-k}(v) \\& +  \sum\limits_{k=0}^{n-1}\tbinom{2n}{2k+2}\varepsilon(\bar{y},\bar{x})^{2(n-k)+1}\theta(x,x)^{n-k}\theta(x,y)\theta(x,x)^k(v) \\ & -2 \sum\limits_{k=0}^n\tbinom{2n}{2k}\varepsilon(\bar{y},\bar{x})^{2k}\theta(x,x)^k\theta(y,x)\theta(x,x)^{n-k}(v) \\ & + 2\sum\limits_{k=0}^{n-1}\tbinom{2n}{2k+1}\varepsilon(\bar{y},\bar{x})^{2(n-k)+1}\theta(x,x)^{n-k}\theta(x,y)\theta(x,x)^k \\ & + \sum\limits_{k=0}^n\tbinom{2n}{2k}\varepsilon(\bar{y},\bar{x})^{2(n-k)+1}\theta(x,x)^{n-k}\theta(x,y)\theta(x,x)^k \\ & - \sum\limits_{k=0}^{n-1}\tbinom{2n}{2k+1}\varepsilon(\bar{y},\bar{x})^{2k}\theta(x,x)^k\theta(y,x)\theta(x,x)^{n-k} \\ & = \varepsilon(\bar{y},\bar{x})^{2(n+1)}\theta(x,x)^{n+1}(u) \\ & -\sum\limits_{k=0}^{n-1}(\tbinom{2n}{2k-1}+ 2\tbinom{2n}{2k}+ \tbinom{2n}{2k+1})\varepsilon(\bar{y},\bar{x})^{2k}\theta(x,x)^k\theta(y,x)\theta(x,x)^{n-k}(v) \\ & - (\tbinom{2n}{2n-1}+2)\varepsilon(\bar{y},\bar{x})^{2n}\theta(x,x)^n\theta(y,x)(v) - (\tbinom{2n}{1}+2)\theta(y,x)\theta(x,x)^n(v) \\ & +\sum\limits_{k=0}^{n-1}(\tbinom{2n}{2k}+ 2\tbinom{2n}{2k+1}+ \tbinom{2n}{2k+2})\varepsilon(\bar{y},\bar{x})^{2(n-k)+1}\theta(x,x)^{n-k}\theta(x,y)\theta(x,x)^k(v)\\ & + \theta(x,y)\theta(x,x)^n(v) \\ & = \varepsilon(\bar{y},\bar{x})^{2(n+1)}\theta(x,x)^{n+1}(u)- \sum\limits_{k=0}^{n}\tbinom{2(n+1)}{2k+1}\varepsilon(\bar{y},\bar{x})^{2k}\theta(x,x)^k\theta(y,x)\theta(x,x)^{n-k}(v)  \\ &+ \sum\limits_{k=0}^{n}\tbinom{2(n+1)}{2(k+1)}\varepsilon(\bar{y},\bar{x})^{2(n+1-k)-1}\theta(x,x)^{n-k}\theta(x,y)\theta(x,x)^k(v).
   \end{align*} The result follows.
\end{proof}

\begin{prop}\label{p5.16}
    Let $T$ be a Lie color triple system, $y,z\in \mathcal{H}(T) $ and $x\in \mathcal{H}(T_+)$. Then we have \begin{align*}
       & \theta([[\dots[y,x,x]\dots],x,x],z)=\\ & \varepsilon(\bar{x},\bar{z})^{2n-1}\varepsilon(\bar{y},\bar{z})\theta(x,z)\sum\limits_{k=1}^n \tbinom{2n}{2k}\varepsilon(\bar{y},\bar{x})^{2k-1}\theta(x,x)^{k-1}\theta(y,x)\theta(x,x)^{n-k} \\ &- \varepsilon(\bar{x},\bar{z})^{2n-1}\varepsilon(\bar{y},\bar{z})\theta(x,z)\sum\limits_{k=0}^{n-1}\tbinom{2n}{2k+1}\varepsilon(\bar{y},\bar{x})^{2(n-k)}\theta(x,x)^{n-k-1}\theta(x,y)\theta(x,x)^k \\ &+ \varepsilon(\bar{x},\bar{z})^{2n}\theta(y,z)\theta(x,x)^n ,
    \end{align*} where $x$ occurs $2n$ times in the bracket.
\end{prop}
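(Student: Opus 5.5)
We argue by induction on $n$, in the same style as Propositions~\ref{p5.14} and~\ref{p5.15}. Write $Y_n=[[\dots[y,x,x]\dots],x,x]$ with $x$ occurring $2n$ times, so $Y_{n+1}=[Y_n,x,x]$ and $\overline{Y_n}=\bar y+2n\bar x$. For $n=1$ we take Condition~\eqref{rep3} of Definition~\ref{d5.2} with the quadruple $(y,x,x,z)$. This expresses $\theta([y,x,x],z)$ through $\theta(x,z)D(y,x)$, $D(y,x)\theta(x,z)$ and $\theta(x,[y,x,z])$. We then use Condition~\eqref{rep2} with first argument $x$ to rewrite $\theta(x,[y,x,z])$ as a combination of products $\theta(\cdot,\cdot)\theta(\cdot,\cdot)$. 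After expanding $D(y,x)=\varepsilon(\bar y,\bar x)\theta(x,y)-\theta(y,x)$, the terms should collapse to
\[
\varepsilon(\bar x,\bar z)\varepsilon(\bar y,\bar z)\theta(x,z)\bigl(\varepsilon(\bar y,\bar x)\theta(y,x)-\varepsilon(\bar y,\bar x)^2\cdot 2\,\theta(x,y)\bigr)+\varepsilon(\bar x,\bar z)^2\theta(y,z)\theta(x,x),
\]
which is the stated formula for $n=1$. The point to keep in mind is that the left factor $\theta(x,z)$ and the right factor $\theta(x,x)^n$ appear only after these two identities are combined.

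For the induction step we apply the same two-step reduction with $y$ replaced by $Y_n$, which gives
\[
\theta(Y_{n+1},z)=a\,\theta(x,z)\theta(Y_n,x)+b\,\theta(x,z)\theta(x,Y_n)+c\,\theta(Y_n,z)\theta(x,x),
\]
where $a,b,c$ are explicit $\varepsilon$-factors read off from the $n=1$ computation with $\bar y$ replaced by $\overline{Y_n}$. Next we substitute the closed forms from Proposition~\ref{p5.14} for $\theta(Y_n,x)$ and $\theta(x,Y_n)$, and the induction hypothesis for $\theta(Y_n,z)$. Each of the three resulting pieces is then of the form $\theta(x,z)\cdot(\text{word in }\theta(x,x),\theta(x,y),\theta(y,x))$ or $\theta(y,z)\theta(x,x)^{n+1}$. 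For the $\theta(y,z)\theta(x,x)^{n+1}$ part, the induction hypothesis multiplied by $\theta(x,x)$ on the right already has the correct shape with coefficient $\varepsilon(\bar x,\bar z)^{2n+2}$.

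We then collect the coefficients of $\theta(x,z)\theta(x,x)^{k-1}\theta(y,x)\theta(x,x)^{n+1-k}$ and of $\theta(x,z)\theta(x,x)^{n-k}\theta(x,y)\theta(x,x)^k$. These come from three sources: Proposition~\ref{p5.14} (with binomials $\binom{2n}{2k}$ and $\binom{2n}{2k+1}$), and the induction hypothesis shifted by one power of $\theta(x,x)$ on the right. Pascal's rule applied twice, $\binom{2n}{j-2}+2\binom{2n}{j-1}+\binom{2n}{j}=\binom{2n+2}{j}$, should give the coefficients $\binom{2n+2}{2k}$ and $\binom{2n+2}{2k+1}$, exactly as in Proposition~\ref{p5.15}.

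The main obstacle will be the bookkeeping of the $\varepsilon$ powers. Since $x\in T_+$, we have $\varepsilon(\bar x,\bar x)=1$, so $\varepsilon(\overline{Y_n},\cdot)=\varepsilon(\bar y,\cdot)\varepsilon(\bar x,\cdot)^{2n}$. Using this together with $\varepsilon(\bar y,\bar x)\varepsilon(\bar x,\bar y)=1$, all factors reduce to the form $\varepsilon(\bar x,\bar z)^{a}\varepsilon(\bar y,\bar z)\varepsilon(\bar y,\bar x)^{b}$. We must then check that the exponents $b$ match across the three contributions, and we will do this first on the boundary terms $k=0,1,n,n+1$. Because the $\theta(x,z)$ factor is pulled to the left consistently throughout, no commutation of $\theta(x,z)$ past $\theta(x,x)$ is needed. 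This is why the formula has that shape, and it is the feature to verify carefully in the base case.
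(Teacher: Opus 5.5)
Your proposal is correct and follows the paper's proof. The paper also inducts on $n$ via the three-term recursion $\theta(Y_{n+1},z)=a\,\theta(x,z)\theta(Y_n,x)+b\,\theta(x,z)\theta(x,Y_n)+c\,\theta(Y_n,z)\theta(x,x)$, with $a=\varepsilon(\bar y,\bar z+\bar x)\varepsilon(\bar x,\bar z)^{2n+1}$, $b=-2a\,\varepsilon(\bar y,\bar x)$ and $c=\varepsilon(\bar x,\bar z)^2$, then substitutes Proposition~\ref{p5.14} and the induction hypothesis and collects terms with the double Pascal identity. Your derivation of this recursion from Conditions~\eqref{rep3} and~\eqref{rep2}, in which the $D(y,x)\theta(x,z)$ terms cancel, makes explicit a step the paper simply asserts, including its ``clear'' initialization.
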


\begin{proof}
   We perform by induction on $n$. The initialization is clear. Assume that the formula holds for $n\geq 1$. We have 
    \begin{align*}
        \theta(&[[\dots[[y,x,x],x,x]\dots],x,x],z)= \varepsilon(\bar{y},\bar{z}+\bar{x})\varepsilon(\bar{x},\bar{z})^{2n+1}\theta(x,z)\theta([[\dots[y,x,x]\dots],x,x],x) \\ & -  2\varepsilon(\bar{y},\bar{z}+\bar{x})\varepsilon(\bar{x},\bar{z})^{2n+1}\varepsilon(\bar{y},\bar{x})\theta(x,z)\theta(x,[[\dots[y,x,x]\dots],x,x]) \\ &+\varepsilon(\bar{x},\bar{z})^2\theta([[\dots[y,x,x]\dots],x,x],z)\theta(x,x) \\ & = \varepsilon(\bar{y},\bar{z}+\bar{x})\varepsilon(\bar{x},\bar{z})^{2n+1}\theta(x,z)(\sum\limits_{k=0}^n\tbinom{2n}{2k}\varepsilon(\bar{y},\bar{x})^{2k}\theta(x,x)^k\theta(y,x)\theta(x,x)^{n-k}) \\ & - \varepsilon(\bar{y},\bar{z}+\bar{x})\varepsilon(\bar{x},\bar{z})^{2n+1}\theta(x,z)(\sum\limits_{k=0}^{n-1}\tbinom{2n}{2k+1}\varepsilon(\bar{y},\bar{x})^{2(n-k)+1}\theta(x,x)^{n-k}\theta(x,y)\theta(x,x)^k) \\ & - 2\varepsilon(\bar{y},\bar{z}+\bar{x})\varepsilon(\bar{x},\bar{z})^{2n+1}\varepsilon(\bar{y},\bar{x})\theta(x,z)(\sum\limits_{k=0}^n\tbinom{2n}{2k}\varepsilon(\bar{y},\bar{x})^{2(n-k)}\theta(x,x)^{n-k}\theta(x,y)\theta(x,x)^k)    \\ & +  2\varepsilon(\bar{y},\bar{z}+\bar{x})\varepsilon(\bar{x},\bar{z})^{2n+1}\varepsilon(\bar{y},\bar{x})\theta(x,z)(\sum\limits_{k=0}^{n-1}\tbinom{2n}{2k+1}\varepsilon(\bar{y},\bar{x})^{2(n-k)+1}\theta(x,x)^{n-k}\theta(x,y)\theta(x,x)^k ) \\ & + \varepsilon(\bar{x},\bar{z})^2(\varepsilon(\bar{x},\bar{z})^{2n-1}\varepsilon(\bar{y},\bar{z})\theta(x,z)\sum\limits_{k=1}^n \tbinom{2n}{2k}\varepsilon(\bar{y},\bar{x})^{2k-1}\theta(x,x)^{k-1}\theta(y,x)\theta(x,x)^{n-k})\theta(x,x) \\ &  - \varepsilon(\bar{x},\bar{z})^2(\varepsilon(\bar{x},\bar{z})^{2n-1}\varepsilon(\bar{y},\bar{z})\theta(x,z)\sum\limits_{k=0}^{n-1}\tbinom{2n}{2k+1}\varepsilon(\bar{y},\bar{x})^{2(n-k)}\theta(x,x)^{n-k-1}\theta(x,y)\theta(x,x)^k)\theta(x,x) \\ & + \varepsilon(\bar{x},\bar{z})^2(\varepsilon(\bar{x},\bar{z})^{2n}\theta(y,z)\theta(x,x)^n)\theta(x,x) \\ & =\varepsilon(\bar{x},\bar{z})^{2n+1}\varepsilon(\bar{y},\bar{z})\theta(x,z)\sum\limits_{k=1}^{n+1} \tbinom{2n+2}{2k}\varepsilon(\bar{y},\bar{x})^{2k-1}\theta(x,x)^{k-1}\theta(y,x)\theta(x,x)^{n+1-k} \\ &  - \varepsilon(\bar{x},\bar{z})^{2n+1}\varepsilon(\bar{y},\bar{z})\theta(x,z)\sum\limits_{k=0}^{n}\tbinom{2n+2}{2k+1}\varepsilon(\bar{y},\bar{x})^{2(n+1-k)}\theta(x,x)^{n-k}\theta(x,y)\theta(x,x)^k  \\ & + \varepsilon(\bar{x},\bar{z})^{2n+2}\theta(y,z)\theta(x,x)^{n+1}.
        \end{align*} Then the  result follows.
\end{proof}

\begin{prop}\label{p5.17}
    Let $T$ be a Lie color triple system, $(y,u),(z,w)\in \mathcal{H}(T\oplus V)$ and $(x,v)\in \mathcal{H}((T\oplus V)_+)$. Then we have 
    \begin{align*}
       & ((y,u),(x,v),\dots,(x,v),(z,w))= 
       \\ &((y,x,\dots,x,z), \varepsilon(\bar{y},\bar{x})^{2n+1}\varepsilon(\bar{y},\bar{z})\varepsilon(\bar{x},\bar{z})^{2n}\theta(x,z)\theta(x,x)^n(u)\\ & - \varepsilon(\bar{y},\bar{z})\varepsilon(\bar{x},\bar{z})^{2n}\theta(x,z)\sum\limits_{k=1}^n\tbinom{2n+1}{2k}\varepsilon(\bar{y},\bar{x})^{2k-1}\theta(x,x)^{k-1}\theta(y,x)\theta(x,x)^{n-k}(v)\\ & - \varepsilon(\bar{x},\bar{z})^{2n+1}\theta(y,z)\theta(x,x)^n(v) \\ &+ \varepsilon(\bar{x},\bar{z})^{2n}\varepsilon(\bar{y},\bar{z})\theta(x,z)\sum\limits_{k=0}^{n-1}\tbinom{2n+1}{2k+2}\varepsilon(\bar{y},\bar{x})^{2(n-k)}\theta(x,x)^{n-k-1}\theta(x,y)\theta(x,x)^k(v)\\ & +\sum\limits_{k=0}^n \tbinom{2n+1}{2k+1}\varepsilon(\bar{y},\bar{x})^{2(n-k)+1}\theta(x,x)^{n-k}\theta(x,y)\theta(x,x)^k(w)\\ &-\sum\limits_{k=0}^n \tbinom{2n+1}{2k+1}\varepsilon(\bar{y},\bar{x})^{2k}\theta(x,x)^{k}\theta(x,y)\theta(x,x)^{n-k}(w) ) ,
    \end{align*} where $(x,v)$ appears $2n+1$ times. In particular, \begin{align*}
        &((y,u),(x,v),\dots,(x,v),(z,w)) = ((y,x,\dots,x,z), \varepsilon(\bar{y},\bar{x})^p\varepsilon(\bar{y},\bar{z})\varepsilon(\bar{x},\bar{z})^{p-1}\theta(x,z)\theta(x,x)^{\frac{p-1}{2}}(u)\\ & - \varepsilon(\bar{x},\bar{z})^p\theta(y,z)\theta(x,x)^{\frac{p-1}{2}}(v) + \varepsilon(\bar{y},\bar{x})\theta(x,y)\theta(x,x)^{\frac{p-1}{2}}(w) - \varepsilon(\bar{y},\bar{x})^{p-1}\theta(x,x)^{\frac{p-1}{2}}\theta(y,x)(w))  ,
    \end{align*} where $(x,v)$ occurs $p$ times.
\end{prop}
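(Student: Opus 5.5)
The plan is to avoid a fresh induction and instead obtain the formula from a single application of the bracket $[\cdot,\cdot,\cdot]_V$, feeding in the formulas already established in Propositions \ref{p5.15} and \ref{p5.16} together with the first formula of Proposition \ref{p5.14}. The bracket $((y,u),(x,v),\dots,(x,v),(z,w))$ with $2n+1$ copies of $(x,v)$ is the outer bracket $[A,(x,v),(z,w)]_V$. Here $A=((y,u),(x,v),\dots,(x,v))$ contains $2n$ copies, and its first component is $Y:=(y,x,\dots,x)$ of degree $\bar y+2n\bar x$. Call its second component $a$; it is given explicitly by Proposition \ref{p5.15}. By the definition of the semi-direct bracket, the second component of the result is
\begin{center}
$\varepsilon(\bar{Y},\bar{x}+\bar{z})\theta(x,z)(a)-\varepsilon(\bar{x},\bar{z})\theta(Y,z)(v)+D(Y,x)(w)$,
\end{center}
where $D(Y,x)=\varepsilon(\bar Y,\bar x)\theta(x,Y)-\theta(Y,x)$. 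The first component $(y,x,\dots,x,z)$ is immediate.

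The computation then has three pieces, which I will treat in this order.

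First, the $u$-terms come only from $\theta(x,z)(a)$. Since $x\in T_+$, we have $\varepsilon(\bar x,\bar x)=1$, so $\varepsilon(\bar Y,\bar x+\bar z)=\varepsilon(\bar y,\bar x)\varepsilon(\bar y,\bar z)\varepsilon(\bar x,\bar z)^{2n}$. Combined with the factor $\varepsilon(\bar y,\bar x)^{2n}$ from Proposition \ref{p5.15}, this gives the stated coefficient $\varepsilon(\bar y,\bar x)^{2n+1}\varepsilon(\bar y,\bar z)\varepsilon(\bar x,\bar z)^{2n}$.

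Second, the $v$-terms come from two sources: $\theta(x,z)$ applied to the $v$-part of $a$, and $-\varepsilon(\bar x,\bar z)\theta(Y,z)(v)$, which I expand with Proposition \ref{p5.16}. Collecting the terms of the form $\theta(x,z)\theta(x,x)^{k-1}\theta(y,x)\theta(x,x)^{n-k}$, the coefficients are $\binom{2n}{2k-1}$ from Proposition \ref{p5.15} (after reindexing) and $\binom{2n}{2k}$ from Proposition \ref{p5.16}. These combine by Pascal's rule into $\binom{2n+1}{2k}$. The $\theta(x,y)$-terms combine in the same way into $\binom{2n+1}{2k+2}$. The leftover term of Proposition \ref{p5.16} gives $-\varepsilon(\bar x,\bar z)^{2n+1}\theta(y,z)\theta(x,x)^n(v)$.

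Third, the $w$-terms come from $D(Y,x)(w)$. Here I use the first formula of Proposition \ref{p5.14} for $\theta(x,Y)$ and the second for $\theta(Y,x)$. Each of these is a sum of $\theta(x,y)$- and $\theta(y,x)$-words, with binomial coefficients $\binom{2n}{2k}$ and $\binom{2n}{2k+1}$. Adding them and using Pascal's rule again should give coefficients $\binom{2n+1}{2k+1}$.

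I expect this $w$-part to be the main obstacle. The stated formula contains only $\theta(x,y)$-words, so the $\theta(y,x)$-words produced by the two formulas of Proposition \ref{p5.14} must be converted. I plan to rewrite them using Condition \eqref{rep2} with $x=y$-type substitutions, relating $\theta(x,x)^k\theta(y,x)$ to $\theta(x,x)^k\theta(x,y)$ modulo commutators. If that conversion fails, I would instead verify the $w$-coefficients directly by a short induction on $n$ that mirrors the proof of Proposition \ref{p5.15}. Checking that the powers of $\varepsilon(\bar y,\bar x)$ match, which needs $\varepsilon(\bar x,\bar x)=1$, is routine but error-prone.

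For the particular case, take $2n+1=p$, so $n=\frac{p-1}{2}$. Modulo $p$, $\binom{p}{j}\equiv0$ for $0<j<p$. All inner sums therefore vanish except the extreme terms: $k=n$ in the $w$-sums, where $\binom{p}{p}=1$, and the terms without binomials. Simplifying the remaining $\varepsilon$-powers with $\varepsilon(\bar y,\bar x)^{p}\varepsilon(\bar x,\bar y)^{p}=1$ gives the displayed specialization.
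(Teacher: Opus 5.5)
Your decomposition is the same as the paper's. You write the bracket as $[A,(x,v),(z,w)]_V$, expand $a$ by Proposition~\ref{p5.15}, $\theta(Y,z)$ by Proposition~\ref{p5.16} and $D(Y,x)$ by the two formulas of Proposition~\ref{p5.14}, merge coefficients by Pascal's rule, and then put $n=\frac{p-1}{2}$. Your $u$- and $v$-computations are correct.

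The step that fails is your plan for the $w$-part. You cannot convert the $\theta(y,x)$-words into $\theta(x,y)$-words, because the printed general formula is false: the $\theta(x,y)$ in its last sum is a misprint for $\theta(y,x)$. Already for $n=0$ the left side is $[(y,u),(x,v),(z,w)]_V$. Its $w$-component is $D(y,x)(w)=\varepsilon(\bar y,\bar x)\theta(x,y)(w)-\theta(y,x)(w)$, whereas the printed formula gives $\varepsilon(\bar y,\bar x)\theta(x,y)(w)-\theta(x,y)(w)$. For the adjoint representation of an ordinary (trivially graded) Lie triple system, the first is $[y,x,w]$, which is nonzero in general, and the second is $0$. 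The particular case in the statement, which contains $\theta(x,x)^{\frac{p-1}{2}}\theta(y,x)(w)$, confirms the intended reading. So neither a rewriting via \eqref{rep2} nor your fallback induction on $n$ can work, since both would have to prove a false identity.

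No conversion is needed. Since $\varepsilon(\bar Y,\bar x)=\varepsilon(\bar y,\bar x)$, the two contributions are:
\begin{itemize}
\item the first formula of Proposition~\ref{p5.14} gives, in $\varepsilon(\bar Y,\bar x)\theta(x,Y)$, the words $-\binom{2n}{2k+1}\varepsilon(\bar y,\bar x)^{2k}\theta(x,x)^k\theta(y,x)\theta(x,x)^{n-k}$;
\item the second formula gives, in $-\theta(Y,x)$, the words $-\binom{2n}{2k}\varepsilon(\bar y,\bar x)^{2k}\theta(x,x)^k\theta(y,x)\theta(x,x)^{n-k}$.
\end{itemize}
By Pascal's rule these add up to $-\binom{2n+1}{2k+1}\varepsilon(\bar y,\bar x)^{2k}\theta(x,x)^k\theta(y,x)\theta(x,x)^{n-k}$. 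This is the last sum with $\theta(y,x)$ in place of $\theta(x,y)$. The $\theta(x,y)$-words combine in the same way into the penultimate sum. This is exactly what the paper's computation produces; the misprint only enters its final line.

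With this correction your specialization argument goes through unchanged. Only $k=n$ survives in each $w$-sum, which gives $\varepsilon(\bar y,\bar x)\theta(x,y)\theta(x,x)^{\frac{p-1}{2}}(w)-\varepsilon(\bar y,\bar x)^{p-1}\theta(x,x)^{\frac{p-1}{2}}\theta(y,x)(w)$.
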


\begin{proof}
    We just apply Propositions \hyperref[p5.14]{\ref{p5.14}}, \ref{p5.15} and \ref{p5.16}. We have \begin{align*}
    ((y,u),(x,v&),\dots,(x,v),(z,w))= [((y,u),(x,v),\dots,(x,v)),(x,v),(z,w)] \\ & = ((y,x,\dots,x,z), \varepsilon(\bar{y}+2n\bar{x},\bar{x}+\bar{z})\theta(x,z)(a) - \varepsilon(\bar{x},\bar{z})\theta([[\dots[y,x,x]\dots],x,x],z)(v) \\ &+ D([[\dots[y,x,x]\dots],x,x],x)(w),  
    \end{align*} where $a$ is the second component of $((y,u),(x,v),\dots,(x,v)) $. So the second component of $ ((y,u),(x,v),\dots,(x,v),(z,w)) $ is \begin{align*}
         \varepsilon&(\bar{y}+2n\bar{x},\bar{x}+\bar{z})\theta(x,z)(\varepsilon(\bar{y},\bar{x})^{2n}\theta(x,x)^n(u)- \sum\limits_{k=0}^{n-1}\tbinom{2n}{2k+1}\varepsilon(\bar{y},\bar{x})^{2k}\theta(x,x)^k\theta(y,x)\theta(x,x)^{n-1-k}(v)) \\ & + \varepsilon(\bar{y}+2n\bar{x},\bar{x}+\bar{z})\theta(x,z)(\sum\limits_{k=0}^{n-1}\tbinom{2n}{2(k+1)}\varepsilon(\bar{y},\bar{x})^{2(n-k)-1}\theta(x,x)^{n-k-1}\theta(x,y)\theta(x,x)^k(v) ) \\ & - \varepsilon(\bar{x},\bar{z})\varepsilon(\bar{x},\bar{z})^{2n-1}\varepsilon(\bar{y},\bar{z})\theta(x,z)\sum\limits_{k=1}^n \tbinom{2n}{2k}\varepsilon(\bar{y},\bar{x})^{2k-1}\theta(x,x)^{k-1}\theta(y,x)\theta(x,x)^{n-k}(v) \\ & + \varepsilon(\bar{x},\bar{z})^{2n}\varepsilon(\bar{y},\bar{z})\theta(x,z)\sum\limits_{k=0}^{n-1}\tbinom{2n}{2k+1}\varepsilon(\bar{y},\bar{x})^{2(n-k)}\theta(x,x)^{n-k-1}\theta(x,y)\theta(x,x)^k(v) \\ & - \varepsilon(\bar{x},\bar{z})^{2n+1}\theta(y,z)\theta(x,x)^n(v) + \varepsilon(\bar{y},\bar{x})\sum\limits_{k=0}^n\tbinom{2n}{2k}\varepsilon(\bar{y},\bar{x})^{2(n-k)}\theta(x,x)^{n-k}\theta(x,y)\theta(x,x)^k(w) \\ & - \varepsilon(\bar{y},\bar{x})\sum\limits_{k=0}^{n-1}\tbinom{2n}{2k+1}\varepsilon(\bar{y},\bar{x})^{2k-1}\theta(x,x)^k\theta(y,x)\theta(x,x)^{n-k}(w) \\ & -\sum\limits_{k=0}^n\tbinom{2n}{2k}\varepsilon(\bar{y},\bar{x})^{2k}\theta(x,x)^k\theta(y,x)\theta(x,x)^{n-k}(w)\\ & + \sum\limits_{k=0}^{n-1}\tbinom{2n}{2k+1}\varepsilon(\bar{y},\bar{x})^{2(n-k)+1}\theta(x,x)^{n-k}\theta(x,y)\theta(x,x)^k(w) \\ & =\varepsilon(\bar{y},\bar{x})^{2n+1}\varepsilon(\bar{y},\bar{z})\varepsilon(\bar{x},\bar{z})^{2n}\theta(x,z)\theta(x,x)^n(u) \\ &  - \varepsilon(\bar{y},\bar{z})\varepsilon(\bar{x},\bar{z})^{2n}\theta(x,z)\sum\limits_{k=1}^n\tbinom{2n+1}{2k}\varepsilon(\bar{y},\bar{x})^{2k-1}\theta(x,x)^{k-1}\theta(y,x)\theta(x,x)^{n-k}(v) \\ &  - \varepsilon(\bar{x},\bar{z})^{2n+1}\theta(y,z)\theta(x,x)^n(v)\\ &  + \varepsilon(\bar{x},\bar{z})^{2n}\varepsilon(\bar{y},\bar{z})\theta(x,z)\sum\limits_{k=0}^{n-1}\tbinom{2n+1}{2k+2}\varepsilon(\bar{y},\bar{x})^{2(n-k)}\theta(x,x)^{n-k-1}\theta(x,y)\theta(x,x)^k(v) \\ & +\sum\limits_{k=0}^n \tbinom{2n+1}{2k+1}\varepsilon(\bar{y},\bar{x})^{2(n-k)+1}\theta(x,x)^{n-k}\theta(x,y)\theta(x,x)^k(w) \\ & -\sum\limits_{k=0}^n \tbinom{2n+1}{2k+1}\varepsilon(\bar{y},\bar{x})^{2k}\theta(x,x)^{k}\theta(x,y)\theta(x,x)^{n-k}(w).
    \end{align*} In particular, for $n=\frac{p-1}{2}$, some terms vanish because of the characteristic $p$ and the result follows.
\end{proof}

\begin{thm}\label{thm5.18}
     Let $T$ be a restricted Lie color triple system over a field $\mathbf{K} $ of characteristic $p>3 $. Let $(V,\theta)$ be a restricted representation of $T$. Consider the operation $[\cdot,\cdot,\cdot]_V: (T\oplus V)\times (T\oplus V)\times (T\oplus V)\to T\oplus V  $ defined on homogeneous elements by 
        \begin{equation*}[(x,a),(y,b),(z,c)]_V=([x,y,z], \varepsilon(\bar{x},\bar{y}+\bar{z})\theta(y,z)(a)-\varepsilon(\bar{y},\bar{z})\theta(x,z)(b)+ D(x,y)(c)). \end{equation*}
     Then there exists a $p$-map on $T\oplus V$ given by the formula \begin{center}
        $(x_{i,\gamma},v_{i,\gamma})^{[p]}=(x_{i,\gamma}^{[p]},\theta(x_{i,\gamma},x_{i,\gamma})^{\frac{p-1}{2}}(v_{i,\gamma}))$
    \end{center} for any basis $(x_{i,\gamma},v_{i,\gamma}) $ of $T_\gamma $ and for all $\gamma\in G_+ $.
\end{thm}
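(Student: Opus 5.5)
The plan is to apply Theorem~\ref{thm4.16} (Jacobson's theorem for Lie color triple systems) to the Lie color triple system $T\oplus V$ equipped with $[\cdot,\cdot,\cdot]_V$. We already know from the earlier proposition that $(T\oplus V,[\cdot,\cdot,\cdot]_V)$ is a Lie color triple system. So, fixing $\gamma\in G_+$ and a basis $(x_{i,\gamma},v_{i,\gamma})$ of $(T\oplus V)_\gamma$, it suffices to check one identity. For each basis element $(x,v)$ and the candidate $(x^{[p]},\theta(x,x)^{\frac{p-1}{2}}(v))\in (T\oplus V)_{p\gamma}$, we must verify
\begin{center}
$[(y,u),(x^{[p]},\theta(x,x)^{\frac{p-1}{2}}(v)),(z,w)]_V=((y,u),(x,v),\dots,(x,v),(z,w))$
\end{center}
for all homogeneous $(y,u),(z,w)$, with $(x,v)$ occurring $p$ times. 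Theorem~\ref{thm4.16} then produces the (unique) $p$-map extending these values. In fact the identity holds for every $(x,v)\in(T\oplus V)_\gamma$, so any basis may be used.

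The right-hand side is computed by the particular case $n=\frac{p-1}{2}$ of Proposition~\ref{p5.17}. Its first component is $(y,x,\dots,x,z)$. Its second component is a combination of four terms: $\theta(x,z)\theta(x,x)^{\frac{p-1}{2}}(u)$, $\theta(y,z)\theta(x,x)^{\frac{p-1}{2}}(v)$, $\theta(x,y)\theta(x,x)^{\frac{p-1}{2}}(w)$ and $\theta(x,x)^{\frac{p-1}{2}}\theta(y,x)(w)$. We expand the left-hand side directly from the definition of $[\cdot,\cdot,\cdot]_V$. Its first component is $[y,x^{[p]},z]$, which equals $(y,x,\dots,x,z)$ by condition 3 of Definition~\ref{d4.1}. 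Its second component is
\begin{center}
$\varepsilon(\bar{y},p\bar{x}+\bar{z})\theta(x^{[p]},z)(u)-\varepsilon(p\bar{x},\bar{z})\theta(y,z)\theta(x,x)^{\frac{p-1}{2}}(v)+D(y,x^{[p]})(w)$.
\end{center}
The $v$-terms already agree, since $\varepsilon(p\bar{x},\bar{z})=\varepsilon(\bar{x},\bar{z})^p$.

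The remaining terms are handled with the restrictedness conditions of Definition~\ref{d5.11}. For the $u$-term, we use $\theta(x^{[p]},z)=\varepsilon(\bar{x},\bar{z})^{p-1}\theta(x,z)\theta(x,x)^{\frac{p-1}{2}}$; this gives exactly the coefficient $\varepsilon(\bar{y},\bar{x})^p\varepsilon(\bar{y},\bar{z})\varepsilon(\bar{x},\bar{z})^{p-1}$. For the $w$-term, we write
\begin{center}
$D(y,x^{[p]})=\varepsilon(\bar{y},p\bar{x})\theta(x^{[p]},y)-\theta(y,x^{[p]})$
\end{center}
and apply both formulas of Definition~\ref{d5.11}. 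The first formula gives $\varepsilon(\bar{y},\bar{x})^p\varepsilon(\bar{x},\bar{y})^{p-1}\theta(x,y)\theta(x,x)^{\frac{p-1}{2}}=\varepsilon(\bar{y},\bar{x})\theta(x,y)\theta(x,x)^{\frac{p-1}{2}}$. The second gives $\varepsilon(\bar{y},\bar{x})^{p-1}\theta(x,x)^{\frac{p-1}{2}}\theta(y,x)$. These match the last two terms of Proposition~\ref{p5.17}.

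The main obstacle is the bookkeeping of signs and bicharacter factors. One point needs care: since $x$ has degree in $G_+$, we have $\varepsilon(\bar{x},\bar{x})=1$. The degree of $(x,v)$ is the common degree $\gamma$ of $x$ and $v$, so all the powers of $\varepsilon$ reduce correctly. We must also check that the binomial coefficients in Proposition~\ref{p5.17} collapse modulo $p$ as claimed: $\binom{p}{j}\equiv 0$ for $0<j<p$, which leaves only the extreme terms. If a coefficient mismatch appears, I would first recheck the particular case of Proposition~\ref{p5.17}, especially the sign of the $\theta(y,x)$ term, against the adjoint representation as a sanity test.
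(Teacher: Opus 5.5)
Your proposal is correct and follows essentially the same route as the paper. You apply Theorem~\ref{thm4.16} to $T\oplus V$, expand $[(y,u),(x^{[p]},\theta(x,x)^{\frac{p-1}{2}}(v)),(z,w)]_V$ from the definition, rewrite $\theta(x^{[p]},z)$, $\theta(x^{[p]},y)$ and $\theta(y,x^{[p]})$ using Definition~\ref{d5.11}, and match the result term by term (with the same bicharacter bookkeeping) against the case $n=\frac{p-1}{2}$ of Proposition~\ref{p5.17}.
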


\begin{proof}
    Using \hyperref[p5.17]{Proposition ~\ref*{p5.17}}, we see that one can use Jacobson's Theorem. Indeed, we have \begin{align*}
        [(y&,u),(x^{[p]},\theta(x,x)^{\frac{p-1}{2}}(v)),(z,w)] \\ &= ([y,x^{[p]},z], \varepsilon(\bar{y},p\bar{x}+\bar{z})\theta(x^{[p]},z)(u) - \varepsilon(p\bar{x},\bar{z})\theta(y,z)\theta(x,x)^{\frac{p-1}{2}}(v)\\& \quad\quad\quad\quad\quad\quad\quad\quad\quad + \varepsilon(\bar{y},p\bar{x})\theta(x^{[p]},y)(w) - \theta(y,x^{[p]})(w)) \\ & = ([y,x^{[p]},z], \varepsilon(\bar{y},\bar{x})^p\varepsilon(\bar{y},\bar{z})\varepsilon(\bar{x},\bar{z})^{p-1}\theta(x,z)\theta(x,x)^{\frac{p-1}{2}}(u) - \varepsilon(\bar{x},\bar{z})^p\theta(y,z)\theta(x,x)^{\frac{p-1}{2}}(v) \\ &\quad\quad\quad\quad\quad\quad\quad\quad\quad + \varepsilon(\bar{y},\bar{x})\theta(x,y)\theta(x,x)^{\frac{p-1}{2}}(w) - \varepsilon(\bar{y},\bar{x})^{p-1}\theta(x,x)^{\frac{p-1}{2}}\theta(y,x)(w)) \\ & = ((y,u),(x,v),\dots,(x,v),(z,w)).
    \end{align*} So, for any basis $((x_i,v_i))_{i\in I} $ of $T_\gamma $ for $\gamma\in G_+ $, there exists a $p$-map $(\cdot)^{[p]}$ on $T\oplus V $ such that $(x_i,v_i)^{[p]}= (x_i^{[p]},\theta(x_i,x_i)^{\frac{p-1}{2}}(v_i)) $.
\end{proof}

\begin{rmq}
    If the center of $T\oplus V $ is trivial, then, for all $\gamma\in G_+ $, the map \begin{center}
    $\begin{array}{lrcl}
(\cdot)^{[p]} : & T_\gamma\oplus V_\gamma & \longrightarrow & T_\gamma\oplus V_\gamma \\
    & (x,v) & \longmapsto & (x^{[p]},\theta(x,x)^{\frac{p-1}{2}}(v))\end{array}$ \end{center} is a $p$-map.
\end{rmq}

\begin{ex}
    Let  $L=L_{(1,1,0)}\oplus L_{(1,0,1)} \oplus L_{(0,1,1)} $ be the restricted Lie color triple system defined above, and $(V,\theta) $ be a restricted representation of dimension $1$ of $L$ with $V=\mathbf{K}v $, $v$ of degree $1$. Then $((e_1,0),(e_2,0),(e_3,0),(0,v)) $ is a basis of $L\oplus V $. Consider the map $(\cdot)^{[p]} $ defined on the basis by $(e_i,0)^{[p]}= (e_i,0) $, $(0,v)^{[p]}= (0,0) $, $(e_1,v)=(e_1,\theta(e_1,e_1)^{\frac{p-1}{2}}(v))=(e_1,\alpha^{\frac{p-1}{2}}v)=(e_1,v) $ because $\alpha^{\frac{p+1}{2}}=\alpha $. The center of $L$ is trivial so as the center of $L\oplus V $. Indeed, if $(x,a) $ is in the center of $L\oplus V$, then $x$ is in the center of $L$ and then $x=0$, and $\theta(y,z)(a)=0 $ for all $y,z\in L $. With $y=z=e_1 $, we see that $a=0$. So, for all $\gamma\in G_+ $ the map \begin{center}
    $\begin{array}{lrcl}
(\cdot)^{[p]} : & L_\gamma\oplus V_\gamma & \longrightarrow & L_\gamma\oplus V_\gamma \\
    & (x,v) & \longmapsto & (x^{[p]},\theta(x,x)^{\frac{p-1}{2}}(v))\end{array}$ \end{center} is a $p$-map.
\end{ex}

\end{document}